\documentclass[11pt, oneside, reqno]{amsart}
\usepackage[top=2.5cm, bottom=2.5cm ,left=3cm, right=3cm]{geometry}

\usepackage{microtype}
\usepackage[
    backend=biber,
    style=alphabetic,
    backref=true,
    url=false,
    doi=false,
    isbn=false
]{biblatex}
\usepackage{mathtools}
\usepackage[regular]{newcomputermodern}
\usepackage{thmtools}
\usepackage{tikz-cd}
\usepackage{enumitem}
\setlist{itemsep=1pt, topsep=4pt}
\usepackage{graphicx}
\usepackage{comment}

\usepackage{xcolor} 
\definecolor{citeblue}{rgb}{0,0.35,0.75}
\usepackage[
    colorlinks=true,
    linkcolor=citeblue,
    citecolor=citeblue,
    urlcolor=citeblue
]{hyperref}
\usepackage[nameinlink,noabbrev]{cleveref}

\numberwithin{equation}{section}

\crefname{equation}{}{}
\Crefname{section}{\S\!}{}

\theoremstyle{plain}
\newtheorem{theorem}{Theorem}[section]
\newtheorem{lemma}[theorem]{Lemma}
\newtheorem{proposition}[theorem]{Proposition}
\newtheorem{corollary}[theorem]{Corollary}
\newtheorem{conjecture}[theorem]{Conjecture}
\newtheorem{maintheorem}{Theorem}

\theoremstyle{definition}
\newtheorem{definition}[theorem]{Definition}
\newtheorem{example}[theorem]{Example}
\newtheorem{remark}[theorem]{Remark}

\newenvironment{flexible}[1]
  {\par\medskip\noindent\textbf{#1.\,}\normalfont}
  {\par\ignorespacesafterend}

\newcounter{proofstep}
\newcommand{\proofstep}[1][]{%
  \par\medskip
  \refstepcounter{proofstep}%
  \noindent\textbf{Step \theproofstep%
    \if\relax\detokenize{#1}\relax
      .
    \else
      .\ #1.
    \fi
  }
}

\newcounter{claimnum}
\newcommand{\claim}[0]{%
  \par\smallskip
  \refstepcounter{claimnum}%
  \noindent\textbf{Claim \theclaimnum. 
  }
}

\newcommand{\chow}[1]{\pmb{\lbrace}  #1  \pmb{\rbrace}}

\newcommand{\cO}{\mathcal{O}}

\newcommand{\cD}{\mathcal{D}}
\newcommand{\cE}{\mathcal{E}}

\newcommand{\cJ}{\mathcal{J}}

\newcommand{\cP}{\mathcal{P}}

\newcommand{\bL}{\mathbf{L}}

\newcommand{\bR}{\mathbf{R}}
\newcommand{\bS}{\mathbf{S}}

\newcommand{\sM}{\mathscr{M}}
\newcommand{\sP}{\mathscr{P}}

\newcommand{\rH}{\mathrm{H}}

\newcommand{\rK}{\mathrm{K}}

\newcommand{\rT}{\mathrm{T}}

\newcommand{\rU}{\mathrm{U}}

\newcommand{\CC}{\mathbb{C}}
\newcommand{\GG}{\mathbb{G}}

\newcommand{\RR}{\mathbb{R}}
\newcommand{\UU}{\mathbb{U}}
\newcommand{\QQ}{\mathbb{Q}}
\newcommand{\ZZ}{\mathbb{Z}}
\newcommand{\DD}{\mathbb{D}}

\newcommand{\bfv}{\mathbf{v}}

\newcommand{\fro}{\mathfrak{o}}
\DeclareMathOperator{\Db}{\mathrm{D}^\mathrm{b}}

\DeclareMathOperator{\CH}{CH}
\DeclareMathOperator{\ch}{ch}

\DeclareMathOperator{\tr}{tr}
\DeclareMathOperator{\id}{id}
\DeclareMathOperator{\NS}{NS}
\DeclareMathOperator{\Br}{Br}

\DeclareMathOperator{\Hom}{Hom}
\DeclareMathOperator{\Ext}{Ext}
\DeclareMathOperator{\Aut}{Aut}
\DeclareMathOperator{\Bir}{Bir}

\DeclareMathOperator{\Alb}{Alb}
\DeclareMathOperator{\alb}{alb}
\DeclareMathOperator{\Pic}{Pic}

\DeclareMathOperator{\Stab}{Stab}
\DeclareMathOperator{\Mon}{Mon}

\DeclareMathOperator{\even}{even}
\DeclareMathOperator{\odd}{odd}

\DeclareMathOperator{\Km}{Km}
\DeclareMathOperator{\km}{Km}
\DeclareMathOperator{\Kum}{Kum}

\DeclareMathOperator{\rank}{rank}
\DeclareMathOperator{\Coh}{Coh}

\DeclareMathOperator{\GL}{GL}
\DeclareMathOperator{\SO}{SO}

\DeclareMathOperator{\et}{\text{\'et}}

\newcommand{\cf}{\textit{cf.~}}

\title[Bloch's conjecture for twisted abelian surfaces]{Bloch's conjecture for equivalences between twisted abelian surfaces and applications}
\date{\today}

\author{Zaiyuan Chen}
\address{Zaiyuan Chen, Shanghai Center for Mathematical Sciences, Fudan University, Jiangwan Campus, Shanghai, 200438, China}
\email{zaiyuanchen16@fudan.edu.cn}

\author{Zhiyuan Li}
\address{Zhiyuan Li, Shanghai Center for Mathematical Sciences, Fudan University, Jiangwan Campus, Shanghai, 200438, China}
\email{zhiyuan\_li@fudan.edu.cn}

\author{Ruxuan Zhang}
\address{Ruxuan Zhang, School of Mathematical Sciences, Fudan University, Handan Campus, Shanghai, 200433, China}
\email{rxzhang18@fudan.edu.cn}

\begin{document}

\begin{abstract}

The Beauville--Voisin conjecture predicts a canonical descending filtration on the Chow group of zero-cycles of a hyperk\"{a}hler variety, opposite to the conjectural Bloch--Beilinson filtration. A basic test for such filtrations is a Bloch-type principle: the action on zero-cycles should be governed by the action on the holomorphic symplectic form. While this principle has been verified in several cases of hyperk\"ahler varieties of $\mathrm{K3}^{[n]}$-type, the $\mathrm{Kum}_n$-type case remains much less understood.

In this paper, we study this problem through twisted abelian surfaces and their associated $\mathrm{Kum}_n$-type varieties. We first construct a natural action of autoequivalences of twisted abelian surfaces on the Albanese kernel and prove Bloch’s conjecture for all (anti-)symplectic autoequivalences. As an application, we prove the corresponding Bloch conjecture for symplectic birational automorphisms of twisted modular $\mathrm{Kum}_n$-type varieties; in particular, this applies to those admitting a birational Lagrangian fibration. 

Finally, we introduce and study a Shen--Yin--Zhao type filtration on twisted modular varieties and compare it with Voisin’s filtration in the sixfold case. We also establish the anti-symplectic Bloch conjecture for twisted modular $\mathrm{Kum}_3$-type varieties.

\end{abstract}

\maketitle
\setcounter{tocdepth}{1}

\section{Introduction}

\subsection{Bloch's conjecture for derived equivalences}

The Chow group of zero-cycles on a smooth projective surface $X$ has two elementary invariants: degree and Albanese class. After these are fixed, the remaining part is the Albanese kernel
\[
\CH_0(X)_{\alb}:=\ker\bigl(\CH_0(X)_{\hom}\to \Alb(X)\bigr),
\]
where the transcendental geometry of the surface is expected to appear. Bloch's principle predicts that algebraic correspondences should act on this residual group through their action on holomorphic forms \cite{BlochKasLieberman}.

In the derived setting, Fourier--Mukai kernels provide a  natural and rich supply of such correspondences. For K3 and abelian surfaces---and more generally in the twisted setting---one is led to ask whether the action on $\CH_0(X)_{\alb}$ is determined by the action on the holomorphic two-form.

\begin{conjecture}[Bloch's conjecture for (anti‑)equivalences]\label{conj:autoeq}
Let $(X,\alpha)$ and $(X',\alpha')$ be twisted projective K3 surfaces or abelian surfaces. For two derived (anti‑)equivalences
$\Psi_1,\Psi_2 : \mathrm{D}^{\mathrm{b}}(X,\alpha) \to \mathrm{D}^{\mathrm{b}}(X',\alpha')$,
there are natural induced maps
\[
\Psi_i^{\mathrm{CH}_0}: \mathrm{CH}_0(X)_{\mathrm{alb}} \longrightarrow \mathrm{CH}_0(X')_{\mathrm{alb}},\qquad
\Psi_i^{2,0}: \rH^0(X,\Omega_X^2) \longrightarrow \rH^0(X',\Omega_{X'}^2).
\]
Then
\[
\Psi_1^{2,0} = \Psi_2^{2,0} \quad\Longleftrightarrow\quad 
\Psi_1^{\mathrm{CH}_0} = \Psi_2^{\mathrm{CH}_0}.
\]
\end{conjecture}

The Chow-theoretic map $\Psi^{\CH_0}$ is not immediately obvious and requires elaboration. For a derived (anti-)equivalence $\Psi: \Db(X,\alpha) \to \Db(X',\alpha')$ with kernel $\mathcal{E} \in \Db(X \times X', \alpha^{-1} \boxtimes \alpha')$, the action on Chow groups is defined by the correspondence
\[
\Psi^{\CH_*}(\gamma) := \pi_{X'*}\bigl( \ch(\mathcal{E}) \cdot \pi_X^*(\gamma) \bigr),
\]
where $\pi_X, \pi_{X'}$ are the projections. For untwisted K3 surfaces, Huybrechts \cite[Theorem 2]{Huybrechts10} proved that $\Psi^{\CH^*}$ preserves the Beauville--Voisin ring; this gives a well-defined map on the Albanese kernels:
\[
\Psi^{\CH_0} : \CH_0(X)_{\mathrm{alb}} \longrightarrow \CH_0(X')_{\mathrm{alb}}.
\]
For twisted surfaces, the same role is played by the twisted Chern character (see \cite[\S 5]{CLZZ:2024}; the construction is carried out in \Cref{sec:bloch-surface}). In parallel, the twisted Chern character $\ch(\cE)$ defines a Hodge isometry that restricts to a cohomological action $\Psi^{2,0}: \mathrm{H}^0(X,\Omega_X^2) \to \mathrm{H}^0(X',\Omega_{X'}^2)$ (see \Cref{subsection:FM-action}).

\begin{remark}\label{rmk:conj=auto}
This conjecture easily reduces to the autoequivalence case. By setting $\Phi := \Psi_1^{-1}\circ\Psi_2$, we see that $\Psi_1^{2,0} = \Psi_2^{2,0}$ if and only if $\Phi^{2,0} = \operatorname{id}$, and similarly for the Chow action.
\end{remark}

This reduction motivates the following terminology.

\begin{definition}
Let $\Psi$ be a derived (anti-)autoequivalence of $\Db(X,\alpha)$. We call $\Psi$ \emph{symplectic} if $\Psi^{2,0} = \id$, and \emph{anti-symplectic} if $\Psi^{2,0} = -\id$.
\end{definition}

Consequently, \Cref{conj:autoeq} is equivalent to proving that every symplectic autoequivalence acts trivially on $\CH_0(X)_{\alb}$. For anti-symplectic autoequivalences, one can again appeal to \Cref{conj:autoeq}: by comparing such an autoequivalence $\Psi$ with the shift functor
\[
[1]:\Db(X,\alpha)\to \Db(X,\alpha),
\]
one expects $\Psi^{\CH_0}=-\operatorname{id}$. 

This derived version significantly strengthens classical Bloch-type results. Traditionally, Bloch's conjecture is studied via correspondences arising from cycles or geometric automorphisms (e.g., \cite{BlochKasLieberman, Voisin2014, PW16} for surfaces with $p_g=0$, or \cite{VoisinSympK3, HuybrechtsSymplectic, DuLiu2021} for special geometric situations on K3 surfaces). The derived approach goes much further: Fourier--Mukai kernels need not come from automorphisms, and they live naturally on twisted products. Moreover, \emph{anti-equivalences} have no direct counterpart in the classical geometric setting.


Previously, this derived formulation had been established in the following cases:
\begin{itemize}
    \item For untwisted K3 surfaces, Huybrechts \cite[Theorem 3]{Huybrechts10} proved that if two derived equivalences induce the same cohomological action, then their actions on the Albanese kernel coincide. In particular, the conjecture holds for spherical twists, whose cohomological action is a $(-2)$-reflection.
    \item For untwisted K3 surfaces with Picard number greater than $2$, \Cref{conj:autoeq} with its anti-symplectic analogue was further confirmed in \cite[Theorem 1.3]{LiYuZhang}.
\end{itemize}
In this paper, we construct the analogous action $\Psi^{\mathrm{CH}_0}$ for \emph{twisted abelian surfaces} and verify the conjecture in full generality. 
Our first main result is the following.

\begin{maintheorem} \label{thm:main}
Let $(X,\alpha)$ be a twisted abelian surface. For any derived (anti‑)autoequivalence $\Psi$ of $\Db(X,\alpha)$, we have 
\[
\Psi^{\mathrm{CH}_0} = 
\begin{cases}
\phantom{-}\operatorname{id} & \text{if $\Psi$ is a symplectic equivalence, or an anti‑symplectic anti‑equivalence};\\[2pt]
-\operatorname{id} & \text{if $\Psi$ is an anti‑symplectic equivalence, or a symplectic anti‑equivalence}.
\end{cases}
\]
In particular, \Cref{conj:autoeq} holds for (anti-)equivalences between twisted abelian surfaces. 
\end{maintheorem}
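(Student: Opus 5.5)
We first reduce to equivalences: if $\Psi$ is an anti-equivalence, we compose it with the derived dual $\DD=\bR\mathcal{H}om(-,\cO_X)\colon \Db(X,\alpha)\to\Db(X,\alpha^{-1})^{\mathrm{op}}$, or with a fixed anti-equivalence such as $\DD$ followed by a suitable twist. Then $\Psi$ differs from an honest equivalence by a functor whose actions on $\rH^{2,0}$ and on $\CH_0(X)_{\alb}$ we compute directly. On the Chow side, $\ch(E^\vee)$ is $\ch(E)$ with the odd-codimension parts negated, so the dual acts on $\CH_0$ as the identity. On the $\rH^{2,0}$ side, after the Mukai-pairing identification it acts by $-1$ relative to the sign convention built into $\Psi^{2,0}$. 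This accounts for the swapped signs in the two lines of the statement. The shift $[1]$ acts by $-\id$ on both sides. Hence it suffices to show that every symplectic autoequivalence of $\Db(X,\alpha)$ acts trivially on $\CH_0(X)_{\alb}$; the anti-symplectic case then follows by composing with $[1]$.

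For equivalences, we use the structure of $\Aut\Db(X,\alpha)$ for twisted abelian surfaces. Orlov's description, extended to the twisted case, gives an exact sequence
\[
0\to \ZZ\times (X\times\widehat X)\to \Aut\Db(X,\alpha)\to \mathrm{U}(X\times \widehat X,\alpha)\to 0 .
\]
Here the kernel is generated by shifts $[2k]$, translations $t_x^*$, and tensoring by $\alpha$-compatible line bundles in $\Pic^0$. The image is a group of isometric automorphisms of $X\times\widehat X$, which sits inside the Hodge isometries of $\rH^{\mathrm{ev}}$, or more precisely inside $\mathrm{Spin}$ acting on $\rH^*(X,\QQ)$. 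Translations and $\Pic^0$-twists act trivially on $\CH_0(X)_{\alb}$. For translations this holds because $t_{x*}z-z$ is expressed through Pontryagin products with $[x]-[0]$, which vanish on the Albanese kernel, using Beauville's decomposition $\CH_0=\bigoplus_s \CH_0{}_{(s)}$ with $\CH_0(X)_{\alb}=\CH_0{}_{(2)}$. For $\Pic^0$-twists, multiplication by $\ch(L)-1\in \CH^{\ge1}_{(\ge1)}$ kills $\CH_0$. So the action factors through the image $U$, and since all actions are motivated by algebraic correspondences, it is compatible with the Beauville grading. We then want a map $U\to \GL(\CH_0{}_{(2)})$ that factors through the action on $\rH^{2,0}$, equivalently on the transcendental lattice $T(X)$.

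The key step is to identify $\Psi^{\CH_0}$ on $\CH_{0(2)}$ with the action of the corresponding cohomological correspondence on $\rH^2_{\mathrm{tr}}$, via a Chow–Künneth / Kimura finite-dimensionality argument. Abelian varieties have Kimura finite-dimensional motives, and the twisted Chern character of the kernel is a cycle on $X\times X$ after trivializing the Brauer class up to isogeny (\cite[\S 5]{CLZZ:2024}). By the nilpotence theorem, the correspondence $\ch(\cE)$ restricted to the transcendental motive $t_2(X)=h^2_{\mathrm{tr}}(X)$ is determined by its cohomology class modulo a nilpotent ideal. The transcendental motive has $\End(t_2(X))\hookrightarrow\End(T(X)_\QQ)$, up to nilpotents, by Kimura. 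A symplectic $\Psi$ acts on $T(X)_\QQ$ as a Hodge isometry fixing $\sigma$. Since $T(X)_\QQ$ is an irreducible Hodge structure whose Hodge endomorphisms form a field, it acts as the identity. Thus $\Psi$ acts on $t_2$ as $\id+N$ with $N$ nilpotent. It is also invertible of finite group-theoretic nature: we show that $\Psi^{\CH}$ restricted to $t_2$ is an honest (unipotent) group action compatible with composition. This is the main obstacle, because the Chern character is multiplicative under composition only after Mukai-vector normalization (Todd class, and the twisted Mukai vector), and we must check that the $\sqrt{\mathrm{td}}$ factor, which is trivial for abelian surfaces, makes $\Psi\mapsto\Psi^{\CH_0}$ a homomorphism. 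Granting this, we argue that $N=0$ as follows. In Kimura's framework, $\End(t_2(X))$ is a finite-dimensional algebra whose nilradical is the kernel to $\End(T(X))$. We need $N$ to act as zero on $\CH_0$, and we try to use that $\CH_0(X)_{\alb}=\CH_0(t_2(X))$ is a faithful module only up to that nilradical. For abelian surfaces, however, $\End(t_2)\cong \End_{\mathrm{Hdg}}(T)$ exactly, by Kimura together with the Lefschetz standard conjecture for abelian varieties and the known Hodge conjecture for $A\times A$ in the relevant degree. This last step is what we would verify carefully. It then gives $\Psi^{\CH_0}=\id$.
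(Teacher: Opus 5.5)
\textbf{There is a genuine gap in the decisive step.} You need the correspondence $\ch(\cE)$ on $X\times X$, which acts as the identity on $\rT(X)_\QQ$, to act as the identity on $\CH_0(X)_{\alb}$. Kimura finite-dimensionality gives much less. It only says that the kernel of $\operatorname{End}(t_2(X))\to\operatorname{End}(\rT(X)_\QQ)$ is a nilpotent ideal. So you get $\Psi|_{t_2}=\id+N$ with $N$ nilpotent, and nothing forces $N=0$. The standard trick that does kill $N$ applies only to finite-order elements: one averages over a finite group to produce a homologically trivial idempotent. Symplectic autoequivalences typically have infinite order.

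Your final claim, that $\operatorname{End}(t_2(X))\cong\operatorname{End}_{\mathrm{Hdg}}(\rT(X)_\QQ)$ ``by Kimura together with the Lefschetz standard conjecture and the Hodge conjecture for $A\times A$'', is not a theorem. Those results control which cohomology classes are algebraic, i.e.\ the image of the cycle class map. They say nothing about its kernel on Chow groups. Since $\operatorname{End}(h^2(X))$ lies in $\CH^2(X\times X)_{(0)}$, the injectivity you want would follow from Beauville's conjecture for codimension-$2$ cycles on the abelian fourfold $X\times X$, which is open. It would also give Bloch's conjecture for \emph{every} self-correspondence of an abelian surface, which is far more than the theorem asserts and not known. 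By contrast, the point you flag as the main obstacle is harmless. By Grothendieck--Riemann--Roch, $\Psi^{\CH_*}(\ch_{X,\alpha}(F))=\ch_{X,\alpha}(\Psi(F))$, and these classes span $\CH_*(X)$, so $\Psi\mapsto\Psi^{\CH_*}$ is multiplicative.

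\textbf{Your reduction for anti-equivalences also has a hole.} The dual $\DD$ lands in $\Db(X,-\alpha)$. When $\alpha$ has order $>2$ there is in general no ``suitable twist'' back to $\Db(X,\alpha)$. So you must handle equivalences $\Db(X,\alpha)\to\Db(X,-\alpha)$, not only autoequivalences.

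\textbf{The missing idea} is to avoid general correspondences altogether and reduce to a K3 surface where the derived-category statement is already known. The paper proceeds as follows.
\begin{enumerate}
\item It composes $\Psi$ (or $\DD\circ\Psi$) with some $\Phi_\gamma$, $\gamma\in A_{X,\alpha}$, so that the result commutes with $\bL\iota^*$. This uses the $2$-divisibility of $A_{X,\alpha}$, and $\Phi_\gamma$ acts trivially on $\CH_0(X)_{\alb}$.
\item It descends the result through a twisted BKR equivalence to the twisted Kummer surface $(\km(X),\bar\alpha)$. There $\pi_*\kappa^*$ identifies the Albanese kernel with $\CH_0(X)_{\alb}$.
\item Since $\rho(\km(X))\ge 17$, both $(\km(X),\pm\bar\alpha)$ are derived equivalent to one untwisted K3 surface $S'$.
\item The Li--Yu--Zhang theorem for untwisted K3 surfaces of Picard rank $\ge 3$ then finishes the proof.
\end{enumerate}
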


\Cref{thm:main} actually goes beyond \Cref{conj:autoeq}. 
A genuinely new point is the treatment of anti‑equivalences. For a twisted surface $(X,\alpha)$, the derived dual functor provides a natural anti‑equivalence
$\DD \colon \Db(X,\alpha)^{\mathrm{op}} \to \Db(X,-\alpha)$.
Given an (anti‑)symplectic anti‑autoequivalence $\Psi$ of $\Db(X,\alpha)$, the composition
\[
\DD \circ \Psi \colon \Db(X,\alpha)\to \Db(X,\alpha)^{\rm op}\to  \Db(X,-\alpha),
\]
is an equivalence, and the action of $\Psi$ on $\CH_0(X)_{\alb}$ coincides with that of $\DD \circ \Psi$.
If $\alpha = -\alpha$ in $\Br(X)$ (i.e., $\alpha$ has order $1$ or $2$), then $\Db(X,-\alpha) \simeq \Db(X,\alpha)$, and the $\CH_0$‑action of $\DD \circ \Psi$ (which must be $\pm \id$) is governed by \Cref{conj:autoeq}.
However, when the order of $\alpha$ is strictly larger than $2$, the categories $\Db(X,\alpha)$ and $\Db(X,-\alpha)$ can be different, and no direct comparison argument via \Cref{conj:autoeq} is available for anti‑equivalences of \emph{either} symplectic or anti‑symplectic kind.

\Cref{thm:main} is the input for the hyperkähler applications. The mechanism of the paper is to control zero-cycles on twisted modular varieties of $\mathrm{Kum}_n$-type through derived equivalences of the underlying twisted abelian surfaces. In the symplectic case, this leads to Bloch's conjecture for birational automorphisms. The anti-symplectic case requires a finer formulation, discussed in \Cref{subsection:anti-symplectic}, involving the expected Beauville--Voisin filtration and alternating signs on its graded pieces.

\subsection{Bloch's conjecture for symplectic birational automorphisms}

We now explain the main hyperk\"ahler application of \Cref{thm:main}. For a hyperk\"ahler variety, the analogue of a symplectic autoequivalence is a birational automorphism acting trivially on the holomorphic symplectic form. The corresponding Bloch principle predicts that such a symmetry should act trivially on zero-cycles.

\begin{conjecture}[Bloch's conjecture for symplectic birational automorphisms] \label{conj:main}
Let $Y$ be a smooth projective hyperk\"ahler variety. Let $f \in \Bir(Y)$ be a symplectic birational automorphism. Then the induced map
\[
f^{\CH_0} \colon \CH_0(Y) \longrightarrow \CH_0(Y)
\]
is the identity.
\end{conjecture}
For hyperk\"ahler varieties of $\mathrm{K3}^{[n]}$-type arising as Bridgeland moduli spaces, \Cref{conj:main} was proved in \cite{LiYuZhang} under the assumptions that the Picard number is at least $4$ and that the underlying K3 surface is untwisted. These restrictions were due to the K3 version of \Cref{conj:autoeq} not being fully established in general. The case of $\mathrm{Kum}_n$-type is the next natural deformation type, where the underlying surface is abelian. The required surface input is precisely what \Cref{thm:main} provides---now unconditionally of Picard number and in the twisted setting.

Recall that a hyperk\"ahler variety $Y$ is of $\mathrm{Kum}_n$-type ($n\ge 2$) if it is deformation equivalent to the classical generalized Kummer variety $$K_n(X)=\ker(X^{[n+1]}\to X)$$ constructed by Beauville \cite{BeauvilleHK}, where $X$ is an abelian surface. Following \cite{MM15}, we call $Y$ \emph{twisted modular} if $Y$ is birational to the Albanese fiber of a Bridgeland moduli space of some twisted abelian surface.

Our main hyperk\"ahler consequence is the following. It provides the first evidence for the Bloch conjecture in the $\mathrm{Kum}_n$-type setting and applies, in particular, to the geometrically important Lagrangian-fibered case.

\begin{maintheorem} \label{thm:main2}
Let $Y$ be a hyperk\"ahler variety of $\mathrm{Kum}_n$-type and let $\phi$ be a symplectic birational automorphism.  Then \Cref{conj:main} holds if $Y$ is twisted modular. In particular,  this holds whenever $Y$ admits a birational Lagrangian fibration.
\end{maintheorem}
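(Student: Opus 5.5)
We reduce \Cref{thm:main2} to \Cref{thm:main}. First we may take $Y$ to be the Albanese fiber $K_{\sigma}(\bfv)$ of a Bridgeland moduli space $M_\sigma(\bfv)$ of stable objects on a twisted abelian surface $(X,\alpha)$, with $\bfv$ primitive, $\bfv^2=2n+2$, and $\sigma$ generic. Birational hyperkähler varieties have isomorphic $\CH_0$, compatibly with the induced actions: Rieß/Voisin-type arguments via a graph-closure correspondence apply, and for the twisted modular case we can cross walls, since different chambers give $K$-equivalent models. So replacing $\phi$ by its conjugate is harmless. The Lagrangian-fibered case then follows from the known result that a $\mathrm{Kum}_n$-type variety with a birational Lagrangian fibration is birational to such a twisted moduli space; this comes from the twisted analogue of Markman's/Mongardi--Rapagnetta's description, via the Tate--Shafarevich twist of the relative compactified Jacobian.

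The main step is to lift $\phi$ to a derived autoequivalence. The action $\phi^*$ on $\rH^2(Y,\ZZ)\cong \bfv^\perp\subset \widetilde{\rH}(X,\alpha,\ZZ)$ is a Hodge isometry, and it preserves the movable cone. By the twisted Torelli/Mukai lattice description for $\mathrm{Kum}_n$-type (Mongardi, Yoshioka), after possibly composing with a symplectic automorphism acting trivially on $\rH^2$, it extends to an orientation-preserving Hodge isometry $g$ of the Mukai lattice $\widetilde{\rH}(X,\alpha,\ZZ)$ fixing $\bfv$ (up to sign). By the twisted derived Torelli theorem for abelian surfaces, $g$ is induced by an autoequivalence $\Psi$ of $\Db(X,\alpha)$. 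Since $\phi$ is symplectic, $\Psi^{2,0}=\id$, replacing $\Psi$ by $\Psi[1]$ if needed. Because $\phi^*$ preserves the movable cone, the Bayer--Macrì wall-crossing description lets us compose $\Psi$ with stability-preserving changes so that $\Psi$ sends $\sigma$-stable objects of class $\bfv$ to $\sigma'$-stable ones with $\sigma'$ in the same chamber. This induces a birational map $\Psi_M\colon M_\sigma(\bfv)\dashrightarrow M_{\sigma}(\bfv)$, and then $K_\sigma(\bfv)$, agreeing with $\phi$ up to an element of the finite group of symplectic automorphisms acting trivially on $\rH^2$. Those extra automorphisms are translations by $X[n+1]$ and $(-1)$-type maps twisted appropriately. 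Translations act trivially on $\CH_0$ of $K_n$ (a standard argument via $\CH_0$ of $M$ and the Albanese fibration); the remaining ones can be treated likewise or absorbed into $\Psi$. I expect this lifting step to be the main obstacle, especially handling the kernel of $\Bir(Y)\to O(\rH^2)$.

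Finally, we compute on $\CH_0$. Following the Shen--Yin--Zhao / Marian--Zhao approach, $\CH_0(K_\sigma(\bfv))$ is generated by classes expressed as $\CH_0$-polynomials in $c_i$ of universal objects. Each point $[E]$ is rationally equivalent to a combination of points determined by $\ch(E)\in\CH^*(X)$ modulo the relevant filtration, and by Marian--Zhao, $[E]=[F]$ in $\CH_0(M)$ when $\ch(E)=\ch(F)$ in $\CH^*(X)$. Since $\ch(\Psi(E))=\Psi^{\CH_*}(\ch(E))$, and \Cref{thm:main} gives $\Psi^{\CH_0}=\id$ on $\CH_0(X)_{\alb}$, we need $\Psi^{\CH_*}$ to be trivial on the relevant graded pieces of $\ch(E)$. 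The degree/Albanese parts are governed by the cohomological action, which fixes $\bfv$. The $\CH^1$ part also needs care: it is determined up to $\Pic^0$, and restricting to the Albanese fiber kills that ambiguity. So $\ch(\Psi(E))=\ch(E)$ for $[E]\in K$, hence $\phi^{\CH_0}=\id$.
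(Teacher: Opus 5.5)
There is a genuine gap in your lifting step. You assert that $\phi^*$ extends to an \emph{orientation-preserving} Hodge isometry $g$ of $\widetilde{\rH}(X,\alpha,\ZZ)$ with $g(\bfv)=\pm\bfv$, that $g$ comes from an autoequivalence, and that a shift makes $\Psi$ symplectic. The sign, however, is not yours to choose. Lattice theory forces $g(\bfv)=\chi(\phi^*)\bfv$, where $\chi(\phi^*)=\pm1$ is the action of $\phi^*$ on the discriminant group $(\bfv^\perp)^\vee/\bfv^\perp\cong\ZZ/(2n+2)$. Markman's description $\Mon^2=\mathcal W^{\det\cdot\chi}$ does not exclude $\chi(\phi^*)=-1$ for symplectic $\phi$.

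Suppose $\chi(\phi^*)=-1$. Since $\phi^*$ preserves the orientation of positive $3$-planes in $\bfv^\perp$ and $\bfv$ is a positive vector, $g$ reverses the orientation of positive $4$-planes. By \Cref{thm:admissibility} and \Cref{thm:derived-torelli}, $g$ is then induced only by an \emph{anti}-autoequivalence, never by an autoequivalence. The shift cannot repair this. $[1]$ acts by $-\id$, which preserves orientation in signature $(4,4)$, and it negates $\bfv$ and the symplectic form at the same time. So $\Psi[1]$ would fix $\bfv$ but induce $-\phi^*$ on $\bfv^\perp$ and be anti-symplectic. Your step ``$\Psi^{2,0}=\id$ after replacing $\Psi$ by $\Psi[1]$'' therefore fails exactly in this case.

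The paper handles this by allowing $\widetilde f(\bfv)=\varepsilon\bfv$ with $\varepsilon=\pm1$ throughout (\Cref{thm:birational-to-equivalence}). For symplectic $f$ with $\varepsilon=-1$ the lift is an anti-symplectic anti-autoequivalence (\Cref{rmk:symplectic-type}). One then needs the anti-equivalence half of \Cref{thm:main}, which gives $\Psi^{\CH_0}=+\id$ on $\CH_0(X)_{\alb}$. That half is the genuinely new input: it goes through $\DD\circ\Psi\colon\Db(X,\alpha)\to\Db(X,-\alpha)$ and Kummer descent, and it does not follow from the autoequivalence case when $\alpha$ has order $>2$.

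Your appeal to Bayer--Macr\`i should likewise be replaced by the abelian-surface wall-crossing of \Cref{thm:wall-crossing-to-equivalence}, which itself produces (anti-)autoequivalences. You also need the $\Phi_\gamma$-adjustment of \Cref{lem:adjust-albanese-fiber} to land in the Albanese fiber. Once anti-autoequivalences are admitted, your Chow-theoretic step applies with the anti-equivalence case of \Cref{thm:main}.

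Two smaller points. You should make explicit the passage from equality in $\CH_0(M)$ to equality in $\CH_0(K)$; the paper does this in \Cref{thm:Marian-Zhao} via $\Alb(M)\times K\to M$. For the Lagrangian-fibered case, the cleaner input is the lattice criterion of \cite[Proposition~3.1]{CKKM}: an isotropic class in the algebraic Markman--Mukai lattice forces twisted modularity.
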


The final assertion is a lattice-theoretic consequence of the Markman--Mukai description: admitting a birational Lagrangian fibration implies the existence of an isotropic vector in the algebraic Markman--Mukai lattice; see \Cref{def:markman-mukai}. By \cite[Proposition~3.1]{CKKM}, this condition yields that $Y$ is twisted modular.

\begin{remark}\label{rem:finite-order-bloch}
In \cite[Theorem 5]{Vial}, Vial established \Cref{conj:main} for finite-order symplectic \emph{automorphisms} on classical generalized Kummer varieties \(K_n(X)\). 
Using the classification of finite-order symplectic birational self-maps on \(\Kum_n\)-type manifolds obtained in \cite[\S 3]{Dutta+26}, one can extend this result to finite-order symplectic \emph{birational} automorphisms in many cases. Actually, \Cref{conj:main} holds unless \(f\) falls into the exceptional situations described in  \cite[Corollary 3.12]{Dutta+26}. 
\end{remark}

The proof of \Cref{thm:main2} has two separate steps. First, one must lift a birational automorphism of the Albanese fiber to a derived (anti-)autoequivalence of the underlying twisted abelian surface $(X,\alpha)$. Second, after such a lift has been constructed, \Cref{thm:main} controls its action on $\CH_0(X)_{\alb}$, and a Marian--Zhao type comparison transfers this control to $\CH_0(Y)$. Thus the real bridge from the surface theorem to the hyperk\"ahler theorem is the lifting step.

The missing input for this lifting step is a Torelli theorem in the even cohomology of a twisted abelian surface. For twisted abelian varieties of arbitrary dimension, Orlov~\cite{Orlov} and Li--Lu--Tang~\cite{LLT25} prove derived Torelli theorems using the Orlov lattice
\[\widehat{\rH}(X)= \rH^1(X,\ZZ) \oplus \rH^{2g-1}(X,\ZZ)\] 
or its twist; see \Cref{thm:de-orlov}. 
For abelian surfaces, however, an additional refined structure is available: the twisted Mukai lattice (the even cohomology) provides a more convenient framework for controlling derived equivalences. Moreover, for the $\mathrm{Kum}_n$-type application, the monodromy and Markman--Mukai lattice naturally live in the twisted Mukai lattice. A Torelli theorem in this lattice is therefore needed.

This even-cohomology formulation has an additional subtlety. Unlike the K3 surface case, not every Hodge isometry between twisted Mukai lattices of abelian surfaces is induced by a derived (anti-)equivalence. We isolate the exact extra condition by introducing \emph{admissible} Hodge isometries. With this condition, we recover a perfect analogue of the K3 surface Torelli theorem.

\begin{maintheorem}[\Cref{thm:derived-torelli}] \label{thm:torelli-C}
    Let $(X,\alpha)$ and $(X',\alpha')$ be twisted abelian surfaces.  There exists an admissible Hodge isometry between their twisted Mukai lattices
\[
h \colon \widetilde{\rH}(X,\alpha,\mathbb{Z}) \xrightarrow{\sim} \widetilde{\rH}(X',\alpha',\mathbb{Z})
\]
if and only if there exists a derived (anti-)equivalence
\[
\Psi \colon \Db(X,\alpha) \xrightarrow{\sim} \Db(X',\alpha') \quad \text{or} \quad \Psi \colon \Db(X,\alpha) \xrightarrow{\sim} \Db(X',\alpha')^{\rm op}
\]
such that $\Psi$ induces $h$.
\end{maintheorem}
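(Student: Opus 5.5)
The plan is to reduce the statement to the Orlov-lattice derived Torelli theorem (\Cref{thm:de-orlov}, due to Orlov and Li--Lu--Tang) via the spin correspondence between the odd and even cohomology of an abelian surface. For an abelian surface $X$, the even integral cohomology is the second exterior power of $\rH^1(X,\ZZ)\oplus\rH^3(X,\ZZ)$, twisted suitably when $\alpha\neq 0$ (twisting by a $B$-field representative of $\alpha$). The Mukai pairing on the even part corresponds to the quadratic form on $\wedge^2$ of the rank-$8$ Orlov lattice. This gives a group homomorphism
\[
\mathrm{Spin}\bigl(\widehat{\rH}(X)\bigr)\longrightarrow \SO\bigl(\widetilde{\rH}(X,\alpha,\ZZ)\bigr),
\]
or more precisely one from the group of isometries of the Orlov lattice to the isometries of the twisted Mukai lattice. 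Accordingly, I would define \emph{admissible} Hodge isometries as those lying in the image of this map, up to the sign or orientation ambiguity coming from anti-equivalences. I expect this is essentially the paper's definition: $h$ is admissible exactly when it is of the form $\wedge^2 g$ (possibly composed with $-\id$ or with the duality isometry) for some Hodge isometry $g$ of Orlov lattices.

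For the direction ``$\Leftarrow$'', take $\Psi$ an equivalence with kernel $\cE$. Then $\Psi$ induces compatible actions on all of $\rH^*$ through the twisted Chern character. Its even part is the Hodge isometry $h$, and its odd part is the Orlov isometry $g$. Compatibility of the Chern character with the cup/wedge product structure (the Mukai vector of $\cE$ restricted to the odd and even parts) gives $h=\wedge^2 g$ up to sign, so $h$ is admissible. For an anti-equivalence, I would compose with the derived dual $\DD\colon\Db(X,\alpha)^{\rm op}\to\Db(X,-\alpha)$. Its cohomological action is the explicit involution $v\mapsto v^\vee$ (sign on $\rH^2$), which I would check is admissible by hand, and which is allowed in the definition.

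For ``$\Rightarrow$'', given admissible $h$, write $h=\wedge^2 g$ (after possibly composing with the dual involution, which reduces to the anti-equivalence case). Here $g\colon\widehat{\rH}(X)\to\widehat{\rH}(X')$ is an isometry. The step I expect to be the main obstacle is showing that $g$ is itself compatible with the twisted Hodge structures in the sense required by \Cref{thm:de-orlov}. The Hodge structure on the Orlov lattice is determined by that on the Mukai lattice only up to the kernel $\pm1$ of the spin cover, and twisting by $\alpha$ must match the Li--Lu--Tang twisted Orlov structure. I would argue via the period: the $(2,0)$-line $\sigma+B\wedge\sigma$ in $\widetilde{\rH}$ corresponds to a pure spinor, i.e.\ a maximal isotropic subspace of $\widehat{\rH}\otimes\CC$, namely the twisted $\rH^{1,0}\oplus\rH^{3,2}$-type piece. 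Since $\wedge^2 g$ preserves the pure spinor line, $g$ preserves the isotropic subspace, which is exactly the Orlov Hodge condition. \Cref{thm:de-orlov} then produces $\Psi$ with odd action $\pm g$. Finally, if $\Psi$ induces $-h$ instead of $h$, I would compose with the shift $[1]$, which acts by $-\id$ on the even part, to obtain $h$ exactly.
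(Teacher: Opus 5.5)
Your reduction rests on an identification that is false as stated. The twisted Mukai lattice $\widetilde{\rH}(X,\alpha,\ZZ)$ has rank $8$, whereas $\wedge^2$ of the rank-$8$ Orlov lattice has rank $28$. So there is no description $h=\wedge^2 g$, and no homomorphism $g\mapsto \wedge^2 g$ from isometries of $\widehat{\rH}$ to isometries of $\widetilde{\rH}$. You seem to be transposing Shioda's $\rH^2=\wedge^2\rH^1$, which is correct in rank $4\to 6$, to the Mukai lattice.

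The correct picture is the following. $\rH^{\mathrm{even}}(X)=\wedge^{\mathrm{even}}\rH^1(X)$ is a half-spin representation of $\mathrm{Spin}(\rH^1(X)\oplus\rH^1(X)^\vee)$ under the Clifford action, while the Orlov lattice is the vector representation. The two special orthogonal groups are quotients of $\mathrm{Spin}$ (as algebraic groups) by \emph{different} central subgroups of order two, which is triality. Hence an isometry $h$ determines some $g$, up to sign, only after you know that $h$ lifts to $\mathrm{Spin}$. Your redefinition of ``admissible'' as ``in the image of this map'' is therefore not meaningful as written. It is also not the notion in the statement: the paper's admissibility is a determinant/orientation condition with respect to the bases $v_I$ induced by admissible bases of $\rH^1$, imposed separately from the orientation condition on a positive four-space.

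This matters because, by building the conclusion into the definition, you skip the actual content of the sufficiency direction. Starting from an integral Hodge isometry $h$ that is admissible and orientation-preserving in the paper's sense, you would have to prove two things. First, that $h$ lies in the image of $\mathrm{Spin}$ under the half-spin representation at the level of the integral lattices $\exp(B)\wedge^{\mathrm{even}}\rH^1(X,\ZZ)$. Second, that the resulting $g$ maps the twisted Orlov lattice of $(X,\alpha)$ integrally onto that of $(X',\alpha')$. This is an integral triality/spinor-norm statement, together with compatibility of the $\exp(B)$-conjugation with both lattices. Your pure-spinor argument only addresses the Hodge condition on $g$, not liftability or integrality.

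Two further steps are unsupported. \Cref{thm:de-orlov} as stated gives only the existence of \emph{some} equivalence, not one with $f_\Psi=g$; you would need that every symplectic isomorphism is realized. In the necessity direction, the claim that the even part of $\Psi^{\rH}$ is the spin image of a lift of $f_\Psi$ is available only in the untwisted case, via Golyshev--Lunts--Orlov. The paper gets admissibility in the twisted case by deforming to the untwisted one.

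For comparison, the paper's sufficiency argument needs none of this spin-theoretic lifting. It takes $\mathbf{v}$ to be the image under $h$ of the twisted point class and passes, via the universal family, to the abelian surface $M=M_\sigma(X',\alpha',\mathbf{v})$. This reduces to an isometry fixing $(0,0,1)$ and, after shifting the $\mathbf{B}$-field, also $(1,0,0)$. It then applies Shioda's Torelli theorem to the induced isometry of $\rH^2$, which is exactly where the admissibility hypothesis is used.
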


\subsection{Beauville--Voisin filtrations and anti-symplectic actions} \label{subsection:anti-symplectic}

More generally, the expected behaviour of anti-symplectic birational automorphisms should be formulated at the level of a filtration on $\CH_0(Y)$, in line with the conjectural Beauville--Voisin filtration. Voisin \cite{VoisinHK} proposed an explicit candidate for such a filtration on the Chow group of zero-cycles.
Let $Y$ be a smooth projective hyperk\"ahler variety of dimension $2n$, and define
\[
\bS_i\CH_0(Y) = \langle y\in Y\mid \dim O_y\geq n-i \rangle,
\]
where $O_y\subset Y$ denotes the set of points rationally equivalent to $y$. 
This defines an increasing filtration $\bS_\bullet \CH_0(Y)$.

\begin{conjecture}[Bloch's conjecture for anti-symplectic birational automorphisms] \label{conj:anti-symplectic}
Let $f\in \Bir(Y)$ be an anti-symplectic birational automorphism.  Then $f_*$ preserves Voisin's filtration $\bS_\bullet\CH_0(Y)$ and acts on the graded
pieces by
\[
f_*|_{\operatorname{gr}^{\bS}_i\CH_0(Y)}
=
(-1)^i\id .
\]
\end{conjecture}

Verifying \Cref{conj:anti-symplectic} would provide nontrivial evidence for the Beauville--Voisin conjecture itself.
For twisted modular varieties of $\mathrm{K3}^{[n]}$-type, the anti-symplectic case follows easily from the symplectic case \cite{LiYuZhang}. 
For $\mathrm{Kum}_n$-type varieties, however, the situation is much more subtle. The main difficulty lies in the lack of an analogue of the ``SYZ = Voisin'' result. More precisely, there exists an O'Grady filtration on the Chow group of a (twisted) K3 or abelian surface $X$, which induces a Shen--Yin--Zhao (SYZ) type filtration on the Albanese fibers $Y$ of Bridgeland moduli spaces over $X$. This leads to the natural question of whether the two filtrations coincide:
\[
\mathbf{S}_\bullet \CH_0(Y)=\mathbf{S}_\bullet^{\rm SYZ} \CH_0(Y)
\]
For $\mathrm{K3}^{[n]}$-type varieties, this equality is valid, and the proof relies on constructing constant cycle varieties in a series of works:
\begin{itemize}
    \item In \cite[Corollary 1.11]{Voisin15}, Voisin proved that $\mathbf{S}_\bullet \CH_0(S^{[n]})=\mathbf{S}_\bullet^{\rm SYZ} \CH_0(S^{[n]})$, where $S^{[n]}$ is the Hilbert scheme of points on a K3 surface $S$.
    \item In \cite[Theorem 1.1]{LZ23} and \cite[Theorem 5.3]{CLZZ:2024}, the authors showed that the two filtrations coincide when $Y$ is a Bridgeland moduli space over a (twisted) K3 surface. The method is based on Voisin's result and degenerate loci of (twisted) vector bundles. 
\end{itemize}
When $X$ is a twisted abelian surface, the equality remains open even at the first nontrivial step. As evidence, we prove it for the classical generalized Kummer sixfolds.
\begin{theorem}[\Cref{thm:SYZ-sixfold}]
If $Y = K_3(X)$, then $\mathbf{S}_\bullet \CH_0(Y) = \mathbf{S}_\bullet^{\rm SYZ} \CH_0(Y)$.
\end{theorem}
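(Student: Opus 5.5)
To prove that $\mathbf{S}_\bullet \CH_0(Y) = \mathbf{S}_\bullet^{\rm SYZ}\CH_0(Y)$ for $Y=K_3(X)$, I would follow the K3 strategy of Voisin \cite{Voisin15}: show one inclusion directly, and obtain the other by constructing constant cycle subvarieties. Here $\dim Y=6$, so $n=3$ and the filtration has steps $\bS_0\subset\bS_1\subset\bS_2\subset\bS_3=\CH_0(Y)$.

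First I would recall the SYZ filtration for $K_3(X)\subset X^{[4]}$. A point $\xi$ with support $\sum x_j$, $\sum x_j=0$, lies in $\bS^{\rm SYZ}_i$ when its image in $\CH_0(X)$, via the O'Grady-type filtration on the abelian surface, has at most $i$ nontrivial ``Beauville-type'' components. Concretely, for an abelian surface the O'Grady filtration is $S_k\CH_0(X)$, generated by classes $[x_1]+\dots+[x_k]+(\text{stuff in } \bS_0)$, and it is closely tied to the Beauville decomposition $\CH_0(X)=\oplus_s \CH_0(X)_{(s)}$. The graded pieces $\CH_{(2)}$ are spanned by products $([a]-[0])*([b]-[0])$.

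\textbf{Inclusion $\bS^{\rm SYZ}_i\subset \bS_i$.} For a generic point of the SYZ stratum, I would exhibit an $(n-i)$-dimensional family of points of $Y$ rationally equivalent to it. The key fact is Lin's orbit description on abelian varieties: subschemes supported on an elliptic curve $E\subset X$, or sums with fixed $\sum[x_j]$ modulo $I^{\ast 2}$, move. For example, $\bS_0$ is spanned by points supported at a single point $x$ with $4x=0$, or more generally by the constant cycle locus. I would construct explicit constant cycle subvarieties of dimensions $3,2,1$ by translating clusters: the relation $\sum [x_j+t]-[x_j]$ depends only on $t$ through $I^{*2}$-terms that cancel when the $x_j$ satisfy linear relations. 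This uses the Bloch/Beauville relation $I^{*3}=0$ on surfaces.

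\textbf{Inclusion $\bS_i\subset \bS^{\rm SYZ}_i$.} Here I would use Voisin's argument. If $\dim O_y\ge 3-i$, then by Mumford--Roitman-type arguments the holomorphic 2-form restricts to zero on $O_y$ in a strong sense, so $O_y$ is isotropic. Pushing $O_y$ through the incidence correspondence to $X$ gives a family of 0-cycles on $X$ of dimension $\ge 3-i$ that are all rationally equivalent. By Roitman and the O'Grady/Voisin estimate, a $d$-dimensional family of rationally equivalent effective cycles of degree $4$ on $X$ forces the class into $S_{4-d}$-type strata. A dimension count then gives $y\in \bS^{\rm SYZ}_i$, after accounting for the kernel condition $\sum x_j=0$, which costs one.

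\textbf{Main obstacle.} I expect the hardest step to be the first inclusion in the middle degrees $i=1,2$. There one needs genuinely new constant cycle subvarieties in $K_3(X)$, and no degeneracy-locus construction is available there, unlike the twisted K3 case. I would first try the sixfold-specific geometry: for $[x]$ fixed, the surface $\{\xi=\{a,-a,b,-b\}\}$ and curves of the form $\{x,y,-x-y+t,\dots\}$. I would then use the relation $([a]-[0])*([b]-[0])*([c]-[0])=0$ to verify that these subvarieties are constant cycle. Checking that they pass through a general point of each SYZ stratum is the delicate part.
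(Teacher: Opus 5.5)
There is a genuine gap: neither inclusion is actually established. For $\bS^{\rm SYZ}_i\subset\bS_i$ you need a $(3-i)$-dimensional constant-cycle subvariety of $K_3(X)$ through every point of the $i$-th SYZ stratum, and you leave this open. The one explicit candidate you name, the surface of points $\{a,-a,b,-b\}$, is not constant-cycle: along it $c_2=4\chow{0}+2\chow{a}_{(2)}+2\chow{b}_{(2)}$, which varies with $(a,b)$.

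For $\bS_i\subset\bS^{\rm SYZ}_i$ you invoke an ``O'Grady/Voisin estimate'' on $X$ that is not available. Voisin's argument for $S^{[n]}$ forces an $(n-i)$-dimensional orbit to meet the locus of subschemes having $n-i$ points on an ample rational curve, whose points all have class $\fro_S$. An abelian surface has no rational curves.

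The bookkeeping is also off. The filtration in play is $\bS_i(X):=\bS_i(\Km(X))$ transported to $X$. Its building blocks are pairs $\chow{b}+\chow{-b}$, not effective cycles $\chow{x_1}+\dots+\chow{x_k}$ on $X$. So ``$S_{4-d}$, minus one for $\sum x_j=0$'' does not land in the right stratum. Already the top case $i=3$---that every $z\in K_3(X)$ satisfies the SYZ condition---is a nontrivial identity in $\CH_0(X)$ that your outline does not supply.

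The missing idea is to transport the problem to the K3 surface $\Km(X)$. There, Voisin's theorem $\bS_\bullet=\bS^{\rm SYZ}_\bullet$ on $\Km(X)^{[3]}$ \cite[Theorem 9]{Voisin15} already contains all the needed constant-cycle constructions. The paper uses the rational map $\rho\colon K_3(X)\dashrightarrow\Km(X)^{[3]}$, $\{a_0,\dots,a_3\}\mapsto\{\varpi(a_0+a_1),\varpi(a_0+a_2),\varpi(a_0+a_3)\}$. It also uses the Bloch relation you list at the end, but in a different way. Apply the vanishing to the four-fold Pontryagin product $\prod_{i=0}^3(\chow{a_i}-\chow{0})$, and combine it with $\sum_i\chow{-a_i}=\sum_i\chow{a_i}$ when $\sum_i a_i=0$. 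This gives
\[
c_2(z)=\chow{0}+\tfrac12\sum_{i<j}\chow{a_i+a_j}\quad\text{in }\CH_0(X).
\]
Rational equivalence on both $K_3(X)$ and $\Km(X)^{[3]}$ is detected by $c_2$ (Marian--Zhao). Hence $\rho(O_z)\subseteq O_{\rho(z)}$ and $\rho^{-1}(O_{\rho(z)})\subseteq O_z$, so $\dim O_z=\dim O_{\rho(z)}$. The same identity converts O'Grady's condition on $c_2(\rho(z))$ into the SYZ condition on $c_2(z)$, and both inclusions follow at once.

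In this light, the constant-cycle subvarieties you were after come from rational curves on $\Km(X)$. For example, take an elliptic curve $E\subset X$ through $0$. The curve $t\mapsto\{b+t,-b+t,c-t,-c-t\}$, $t\in E$, is constant-cycle: its pairwise sums are $\pm2t,\pm(b+c),\pm(b-c)$, and $\chow{2t}+\chow{-2t}=2\chow{0}$ for $t\in E$.
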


We prove that ``SYZ = Voisin'' leads to \Cref{conj:anti-symplectic} for twisted modular $\mathrm{Kum}_n$-type varieties. Moreover, by using Floccari's construction \cite{Fl24}, we give a proof which does not rely on ``SYZ = Voisin'' for twisted modular sixfolds.

\begin{maintheorem}[\Cref{prop:SYZ-give-anti} and \Cref{thm:modular-sixfold}] \label{thm:antisymplectic-sixfold}
Let $Y$ be a hyperk\"ahler variety of~$\mathrm{Kum}_n$-type, and let $f \in \Bir(Y)$ be an anti-symplectic birational automorphism. Suppose that one of the following conditions holds:
\begin{enumerate}
    \item ``SYZ = Voisin'' holds (see \Cref{conj:SYZ}) and $Y$ is twisted modular;
    \item $n=3$ and $Y$ is twisted modular.
\end{enumerate}
Then \Cref{conj:anti-symplectic} holds.
\end{maintheorem}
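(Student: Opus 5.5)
Both cases rest on the same reduction. Since $Y$ is twisted modular, it is birational to the Albanese fiber $K_v(X,\alpha)$ of a Bridgeland moduli space $M_\sigma(X,\alpha,v)$ over a twisted abelian surface. Birational hyperkähler varieties have isomorphic $\CH_0$, compatibly with Voisin's filtration $\bS_\bullet$ (the orbit dimensions $\dim O_y$ are birational invariants for zero-cycles on hyperkähler varieties). So we may assume $Y=K_v(X,\alpha)$. Given an anti-symplectic $f\in\Bir(Y)$, the plan is to lift it, via the even-cohomology derived Torelli theorem (\Cref{thm:torelli-C}), to a derived (anti-)autoequivalence $\Psi$ of $\Db(X,\alpha)$.

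\textbf{The lifting step.} The action $f^*$ on $\rH^2(Y)$ extends, through Markman's description of the monodromy group of $\mathrm{Kum}_n$-type varieties, to a Hodge isometry $h$ of $\widetilde{\rH}(X,\alpha,\ZZ)$ preserving $v$ up to sign. We then check that $h$, or $h$ composed with $-\id$, is admissible. This yields $\Psi$ with $\Psi^{2,0}=-\id$, and $\Psi$ then maps $\sigma$-stable objects to $\sigma'$-stable objects for some $\sigma'$. Crossing walls relates $M_{\sigma'}$ to $M_\sigma$ birationally, and we adjust by tensoring with a translation or a point of $\Pic^0$ so that $\Psi$ preserves the Albanese fiber. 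The induced birational map agrees with $f$ up to an element of the finite group acting trivially on $\rH^2$. For $\mathrm{Kum}_n$-type that group is generated by translations by $X[n+1]$ and $-1$, which must be handled separately. By \Cref{thm:main}, $\Psi^{\CH_0}=-\id$ or $\id$ on $\CH_0(X)_{\alb}$, according to whether $\Psi$ is an equivalence or an anti-equivalence.

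\textbf{Case (1).} The SYZ filtration $\bS^{\rm SYZ}_\bullet\CH_0(Y)$ is generated by classes $[E]$ for which $\ch(E)$ decomposes through O'Grady's filtration on $\CH_0(X)$. The Marian--Zhao type comparison expresses $\CH_0(Y)$ through $\CH_0(X)$: a point $[E]$ is governed by $\ch_2(E)$ together with products of Albanese-kernel classes. Then $\operatorname{gr}^{\rm SYZ}_i$ is generated by $i$-fold products of classes in $\CH_0(X)_{\alb}$, so $\Psi^{\CH_0}=-\id$ acts on it by $(-1)^i$. A lift that is an anti-equivalence must be composed with the dual $\DD$ and treated through the sign convention of \Cref{thm:main}; one has to confirm that the net effect on the graded pieces is still $(-1)^i$. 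Combined with the hypothesis $\bS=\bS^{\rm SYZ}$, this gives \Cref{conj:anti-symplectic}. The residual finite ambiguity (translations and $-1$) acts on $\CH_0(Y)$ compatibly, which we check directly using Vial's results (\Cref{rem:finite-order-bloch}).

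\textbf{Case (2).} Here \Cref{thm:SYZ-sixfold} only covers $K_3(X)$, so we avoid SYZ$=$Voisin. Floccari's construction \cite{Fl24} gives, for a $\mathrm{Kum}_3$-type $Y$, a K3 surface $S_Y$ (the resolution of $Y/G$ for $G=X[2]\rtimes\pm$-type symmetries) whose transcendental motive controls that of $Y$. We would show that $f$ descends to an anti-symplectic correspondence on $S_Y$. On $S_Y$, Voisin's filtration has explicit constant-cycle pieces coming from the image of $\bS_i$. We then identify $\bS_i\CH_0(Y)$ with explicit subgroups (points on orbits of dimension $\ge 3-i$) that are stable under $f$. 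The compatibility of the twisted-modular description with Floccari's quotient should give $(-1)^i$ on each graded piece.

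\textbf{Main obstacle.} The step I expect to be hardest is case (2): proving that $\bS_\bullet$ is $f$-stable and computing the graded action without knowing SYZ$=$Voisin. In particular, one must show that $\bS_1/\bS_0$ and $\bS_2/\bS_1$ are detected by Albanese-kernel products. I would first try to construct enough constant-cycle subvarieties, namely Lagrangian subvarieties of the form $K_v$ restricted to curves in $X$, that are preserved up to sign by $\Psi$.
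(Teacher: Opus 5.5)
Your outline for case (1) points in the right direction. However, it has gaps at the two places where the actual work happens, and case (2) is not yet an argument.

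\emph{Signs in the lifting step.} You assert $\Psi^{2,0}=-\id$ for the lift, and hence $\Psi^{\CH_0}=\id$ when $\Psi$ is an anti-equivalence. That would make $f$ act trivially on $\operatorname{gr}_1$, and you leave the discrepancy ``to confirm''. It resolves as in \Cref{rmk:symplectic-type}. If the extension $\widetilde f$ sends $\bfv\mapsto-\bfv$, the anti-autoequivalence that actually induces $f$ on $Y$ must preserve $\bfv$. So it is the lift composed with $[1]$, which acts by $-\widetilde f$ on $\widetilde{\rH}(X,\alpha,\ZZ)$. Since the twisted $(2,0)$-line lies in $\bfv^\perp$, this anti-equivalence is symplectic, and \Cref{thm:main} gives $-\id$ on $\CH_0(X)_{\alb}$. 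Thus the lift of an anti-symplectic $f$ always acts by $-\id$.

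Separately, Vial's result (\Cref{rem:finite-order-bloch}) concerns the classical $K_n(X)$. It does not handle the $\Aut_0(Y)$-ambiguity on a general $K_\sigma(X,\alpha,\bfv)$. The paper instead absorbs this ambiguity into the lift, since each element of $\Aut_0(Y)$ is induced by an autoequivalence fixing $\sigma$ and $\bfv$.

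\emph{Case (1).} The claim that $\operatorname{gr}_i$ is generated by ``$i$-fold products of Albanese-kernel classes'', on which $f$ acts by $(-1)^i$, is the whole content, and nothing in your sketch produces it. \Cref{thm:Marian-Zhao} only says $\chow{E}$ is determined by $c_2(E)$. It does not say that $\CH_0(Y)$ splits with components homogeneous of degree $i$ in $c_2(E)$.

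The paper gets this by transfer to the Kummer K3:
\begin{itemize}
\item Set $c'(E)=c_2(E)_{(2)}-r\,\fro_{(X,\alpha)}$, and let $R\subset Y\times\Km(X)^{[n]}$ be the incidence of pairs with $c'(E)=\chow{\operatorname{supp}\xi}+k\,\fro_X$.
\item $R$ has a component dominating both factors: onto $\Km(X)^{[n]}$ by the case $i=n$ of \Cref{conj:SYZ}, and onto $Y$ by O'Grady's argument.
\item This gives $\eta\colon\CH_0(Y)\cong\CH_0(\Km(X)^{[n]})$. Under SYZ$=$Voisin, $\eta$ matches Voisin's filtration on $Y$ with the standard split filtration on $\Km(X)^{[n]}$.
\item The lift yields $\chow{x'}_1=-\chow{x}_1$ there, and \cite[Theorem 4.3(iii)]{LiYuZhang} upgrades this to $\chow{x'}_i=(-1)^i\chow{x}_i$.
\end{itemize}
You need this, or an equivalent splitting-plus-homogeneity statement on $Y$, to close case (1).

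\emph{Case (2).} Here you misread Floccari's construction. The crepant resolution of $K/G_0$, with $G_0\cong(\ZZ/2\ZZ)^5\subset\Aut_0(K)$, is a $\mathrm{K3}^{[3]}$-type sixfold $Y_K$, not a K3 surface. Pushing $f$ to a correspondence on a surface would discard exactly the graded structure you need.

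The missing ideas are:
\begin{itemize}
\item[(a)] For twisted modular $K$, the generators of $G_0$ fix $c_2(E)$. So $G_0$ acts trivially on $\CH_0(K)$ by \Cref{thm:Marian-Zhao}, whence $\CH_0(Y_K)\cong\CH_0(K/G_0)\cong\CH_0(K)$.
\item[(b)] This isomorphism matches Voisin's filtrations, by comparing orbit dimensions through the finite quotient $q$ and the birational resolution $\pi$ (\Cref{thm:rational-orbit-sixfolds}).
\item[(c)] $G_0$ is normal in $\Bir(K)$, so $f$ descends to $K/G_0$ and lifts to an anti-symplectic $\widetilde f\in\Bir(Y_K)$. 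The $\mathrm{K3}^{[n]}$-type result of \cite{LiYuZhang} applies to $\widetilde f$ (Picard number $\ge16$).
\end{itemize}
No derived lift, no SYZ$=$Voisin and no new constant-cycle subvarieties are needed. The obstacle you single out disappears once you work on $Y_K$.
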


\begin{flexible}{Organization of the paper}
    \begin{itemize}
        \item \textbf{Section 2} recalls twisted derived categories, \textbf{B}-fields, twisted Mukai and Orlov lattices, Fourier--Mukai actions, and the equivariant model for twisted derived categories.
        \item \textbf{Section 3} proves derived Torelli theorems for twisted abelian surfaces. After reviewing the Kapustin--Orlov conjecture, we introduce admissible cohomological actions and prove a derived Torelli theorem (\Cref{thm:torelli-C}) using the twisted Mukai lattice.
        \item \textbf{Section 4} develops the descent of autoequivalences to twisted Kummer surfaces. We recall the geometry of twisted Kummer surfaces, establish a twisted BKR equivalence, and show that any derived equivalence can be adjusted (by an element of $A_{X,\alpha}$) so that it descends to the Kummer surface.
        \item \textbf{Section 5} proves \Cref{thm:main}. The descent from Section 4 identifies the Albanese kernel of a twisted abelian surface with that of the associated twisted Kummer surface, reducing the claim to the known K3 case.
        \item \textbf{Section 6} studies Bridgeland moduli spaces on twisted abelian surfaces and their Albanese fibers. We recall the monodromy and wall-crossing results needed to lift birational automorphisms of $K_{\sigma}(X,\alpha,\mathbf{v})$ to derived (anti-)equivalences. Together with a criterion of Marian--Zhao, we prove \Cref{thm:main2}.
        \item \textbf{Section 7} discusses Beauville--Voisin filtrations on $\CH_0(K_{\sigma}(X,\alpha,\mathbf{v}))$. We introduce a Shen--Yin--Zhao type filtration, relate it to O'Grady's filtration on the underlying abelian surface, and prove equality with Voisin's filtration for sixfolds, leading to the anti-symplectic application \Cref{thm:antisymplectic-sixfold}.
    \end{itemize}
\end{flexible}

\begin{flexible}{Notations and conventions}
Throughout this paper, we work over the complex numbers and adopt the following conventions.
\begin{itemize}
    \item All Chow groups in this paper are taken with rational coefficients.
    \item For a point $z\in X$, we use $\chow{z}\in\CH_0(X)$ to denote the rational equivalence class of $z$.
\end{itemize}
\end{flexible}

\begin{flexible}{Acknowledgements}
The authors would like to thank Ziwei Lu and Hanfei Guo for helpful discussions and valuable comments on an earlier draft of this work. Z.~Li is supported by the NSFC grants (No.~12171090 and No.~12425105) and the Shanghai Pilot Program for Basic Research (No.~21TQ00). Z.~Li is also a member of LMNS.
Z.~Chen and R.~Zhang are supported by the NSFC grant (No.~12121001). R.~Zhang is also supported by the China Postdoctoral Science Foundation grant (No.~2025M783085).
\end{flexible}

\section{Preliminaries on twisted derived categories}
 This section collects the lattice-theoretic and equivariant descriptions of twisted derived categories that will be used in the Torelli arguments of \Cref{sec:derived-torelli} and in the descent argument of \Cref{sec:kummer-descent}.

\subsection{Twisted derived categories and twisted Mukai lattices}

Let \(X\) be a smooth projective variety over \(\CC\).  In this paper, a \emph{twisted variety} is a pair \((X,\alpha)\) where \(\alpha\in\Br(X)=\rH^2_{\et}(X,\GG_m)\) is a Brauer class.

\begin{definition}
Let \((X,\alpha)\) be a twisted variety.  Denote by \(\Coh(X,\alpha)\) the abelian category of \(\alpha\)-twisted coherent sheaves in the sense of \cite{Caldararu}.  
The bounded derived category of \(\Coh(X,\alpha)\) is denoted by \(\Db(X,\alpha)\).  
\end{definition}

For two twisted varieties \((X,\alpha)\) and \((X',\alpha')\),   a derived equivalence
\[
\Psi:\Db(X,\alpha)\xrightarrow{\sim}\Db(X',\alpha')
\]
can be represented by a Fourier--Mukai kernel \(\cP\) which is a perfect complex in \(\Db(X\times X', \alpha^{-1}\boxtimes \alpha')\) (see \cite[Theorem 1.1]{CS07}). In the rest of this paper we specialize to the case where the underlying variety \(X\) is either a K3 surface or an abelian surface.  The derived categories \(\Db(X,\alpha)\) then enjoy additional properties, which will be exploited in the subsequent sections.

Assume \(X\) is either a K3 surface or an abelian surface, and \(\alpha\in\Br(X)\).  For the untwisted case (\(\alpha=0\)), the \emph{Mukai lattice} of \(X\) is defined as
\[
\widetilde{\rH}(X,\mathbb{Z}) \coloneqq \rH^0(X,\mathbb{Z}) \oplus \rH^2(X,\mathbb{Z}) \oplus \rH^4(X,\mathbb{Z}),
\]
endowed with the Mukai pairing
\begin{equation}\label{eq:mukaipairing}
\big\langle (r_1, b_1, s_1),\, (r_2, b_2, s_2) \big\rangle \coloneqq b_1\cdot b_2 - r_1 s_2 - r_2 s_1,
\end{equation}
and a pure \(\mathbb{Z}\)-Hodge structure of weight \(2\).

In the twisted case, we need the notion of a \textbf{B}-field lift of \(\alpha\).

\begin{definition}
Choose an element \(B\in \rH^2(X,\mathbb{Q})\) whose image in $\rH^2(X,\QQ/\ZZ)$ maps to $\alpha$ via the short exact sequence
\[
0 \longrightarrow \NS(X) \otimes \QQ/\ZZ \longrightarrow \rH^2(X,\QQ/\ZZ) \longrightarrow \Br(X) \longrightarrow 0.
\]
Such a lift \(B\) is called a \emph{\textbf{B}-field lift} of \(\alpha\).  Using \(B\), we define the \emph{twisted Mukai lattice} of \((X,\alpha)\) as
\[
\widetilde{\rH}(X,\alpha,\mathbb{Z}) \coloneqq \exp(B)\cdot \widetilde{\rH}(X,\mathbb{Z}) \;\subset\; \widetilde{\rH}(X,\mathbb{Z})\otimes_{\mathbb{Z}}\mathbb{Q},
\]
which is a full rank lattice isomorphic to \(\widetilde{\rH}(X,\mathbb{Z})\).  
\end{definition}

As explained in \cite{Caldararu, LZ25}, the twisted Mukai lattice is independent of the choice of the \textbf{B}-field up to ``nice'' isometries (see \Cref{example:B-fields} for details); we therefore denote it by \(\widetilde{\rH}(X,\alpha,\mathbb{Z})\).  The algebraic part is defined as \(\widetilde{\rH}_{\rm alg}(X,\alpha,\ZZ) \coloneqq \widetilde{\rH}^{1,1}(X,\alpha,\mathbb{Z})\).

Let \(\rK_0(X,\alpha)\) be the Grothendieck group of \(\Db(X,\alpha)\).  A fundamental tool is the \emph{twisted Chern character}
\[
\ch_{X,\alpha}: \rK_0(X,\alpha) \longrightarrow \widetilde{\rH}(X,\alpha,\mathbb{Q}),
\]
constructed in \cite[Proposition 1.2]{HS05}.  This map is compatible with the Mukai pairing via the Euler pairing on \(\rK_0(X,\alpha)\).  In the untwisted case \(\alpha=0\), we recover the usual Chern character \(\ch: \rK_0(X)\to \widetilde{\rH}(X,\mathbb{Q})\).
By further composing with the cycle class map, one can obtain an integral Mukai map (\cf \cite{YoshiokaModuli,HS05})
\[
v(E) \coloneqq \operatorname{cl} \bigl(\operatorname{ch}_{X,\alpha}(E)\bigr) \in \widetilde{\rH}_{\mathrm{alg}}(X,\alpha,\mathbb{Z}).
\]

\subsection{Brauer classes and the twisted Orlov lattice for abelian surfaces}

When \(X\) is an abelian surface, the group structure provides additional tools to study the Brauer class, which naturally leads to a \textbf{B}-field on the odd cohomology \(\rH^1(X,\mathbb{Q})\oplus \rH^3(X,\mathbb{Q})\).  This also plays a crucial role in the study of twisted derived equivalence.

More precisely, for a Brauer class \(\alpha\in\Br(X)[r]\) of exact order \(r\), the Kummer exact sequence for the multiplication-by-\(r\) map \(\mathbf{r}_X:X\to X\) induces
\[
0 \longrightarrow \NS(X)/r\NS(X) \xlongrightarrow{\iota} \Hom(\wedge^2 X[r], \mu_r) \xlongrightarrow{\delta} \Br(X)[r] \longrightarrow 0,
\]
where \(X[r]\) denotes the group of \(r\)-torsion points.  Hence \(\alpha\) can be represented by an alternating bilinear pairing
\[
e_\alpha \colon X[r]\times X[r] \longrightarrow \mu_r,
\]
unique modulo the commutator forms \(e^{L^{\otimes r}}|_{X[r]}\) for \(L\in\Pic(X)\).

Using the Weil pairing, \(e_\alpha\) corresponds to a skew-symmetric homomorphism
\begin{equation}
\phi_\alpha \colon X[r] \longrightarrow \widehat{X}[r],\quad 
e_\alpha(\sigma_1,\sigma_2)=\langle\sigma_1,\phi_\alpha(\sigma_2)\rangle,
\end{equation}
where \(\langle-,-\rangle \colon X[r]\times\widehat{X}[r]\to\mu_r\) is the canonical Weil pairing.  
Because the map \(\rH^2(X[r],\mathbb{G}_m)\to\Hom(\wedge^2 X[r],\mu_r)\) is surjective, there exists a normalized \(2\)-cocycle \(a\in\mathsf{Z}^2(X[r],\mathbb{G}_m)\) such that
\[
e_\alpha(\sigma_1,\sigma_2)=\frac{a_{\sigma_1,\sigma_2}}{a_{\sigma_2,\sigma_1}}.
\]
We say that \(\alpha\) is represented by the cocycle \(a\).  This cocycle description will be used when we discuss equivariant categories.

Let us now review the concept of twisted Orlov lattice. The classical Orlov lattice is the unimodular lattice
\[
\widehat{\mathrm{H}}(X,\mathbb{Z}) := \mathrm{H}^1(X,\mathbb{Z}) \oplus \mathrm{H}^1(\widehat{X},\mathbb{Z}),
\]
equipped with the symmetric bilinear form
\[
q\bigl((\xi,\eta), (\xi',\eta')\bigr) := \eta(\xi') + \eta'(\xi),
\]
where \(\eta(\xi')\) denotes the natural pairing between \(\mathrm{H}^1(X,\mathbb{Q})^*\) and \(\mathrm{H}^1(X,\mathbb{Q})\). Let \(J_X\) denote the complex structure on \(\mathrm{H}^1(X,\mathbb{R})\). Then the dual complex structure on \(\mathrm{H}^1(\widehat{X},\mathbb{R}) \cong \mathrm{H}^1(X,\mathbb{R})^*\) is given by \(-J_X^{\top}\). The product complex structure on \(\widehat{\mathrm{H}}(X,\mathbb{Z}) \otimes \mathbb{R}\) is therefore
\[
\mathcal{J}_0 := \begin{pmatrix}
J_X & 0 \\
0 & -J_X^{\top}
\end{pmatrix}.
\]

For a twisted abelian variety \((X,\alpha)\), choose a \textbf{B}-field lift \(B \in \mathrm{H}^2(X,\mathbb{Q})\) of \(\alpha\). Via the isomorphism \(\mathrm{H}^2(X,\mathbb{Q}) \cong \operatorname{Hom}_{\mathbb{Q}}(\mathrm{H}^1(X,\mathbb{Q}), \mathrm{H}^1(\widehat{X},\mathbb{Q}))\), we view \(B\) as a linear map
\[
B : \mathrm{H}^1(X,\mathbb{Q}) \longrightarrow \mathrm{H}^1(\widehat{X},\mathbb{Q}), \qquad B^{\top} = -B.
\]
We can define an action on \(\mathrm{H}^1(X,\mathbb{Q}) \oplus \mathrm{H}^1(\widehat{X},\mathbb{Q})\) by
\[
\exp(B)(\xi, \eta) := \bigl(\xi,\; \eta + B(\xi)\bigr).
\]
This upper-triangular automorphism preserves the bilinear form \(q\) because \(B\) is skew-symmetric.

\begin{definition}[Twisted Orlov lattice]\label{def:twist-Orlov}
The \emph{twisted Orlov lattice} of \((X,\alpha)\) is
\[
\widehat{\mathrm{H}}(X,\alpha,\mathbb{Z}) := \exp(B)\bigl(\mathrm{H}^1(X,\mathbb{Z}) \oplus \mathrm{H}^1(\widehat{X},\mathbb{Z})\bigr) \;\subset\; \mathrm{H}^1(X,\mathbb{Q}) \oplus \mathrm{H}^1(\widehat{X},\mathbb{Q}).
\]
It is a full-rank sublattice, isomorphic to \(\widehat{\mathrm{H}}(X,\mathbb{Z})\) via \(\exp(B)\). The complex structure on \(\widehat{\mathrm{H}}(X,\alpha,\mathbb{Z}) \otimes \mathbb{R}\) is given by conjugation:
\[
\mathcal{J}_\alpha := \exp(B) \circ \mathcal{J}_0 \circ \exp(B)^{-1}
= \begin{pmatrix}
J_X & 0 \\
B J_X + J_X^{\top} B & -J_X^{\top}
\end{pmatrix},
\]
and the symmetric bilinear form \(q\) is inherited from the natural pairing.
\end{definition}
The above construction is independent of the choice of the \textbf{B}-field lift up to isomorphism: a different lift \(B' = B + \ell\) with \(\ell \in \mathrm{H}^2(X,\mathbb{Z})\) corresponds to tensoring by a (topological) line bundle, which induces an isomorphism of twisted Orlov lattices preserving the bilinear form.

\subsection{Fourier--Mukai actions on Chow groups and cohomology} \label{subsection:FM-action}

Let $(X,\alpha)$ and $(X',\alpha')$ be twisted abelian surfaces, whose Todd classes are trivial. Let \(\Psi_{\cE}: \Db(X,\alpha) \to \Db(X',\alpha')\) be a derived equivalence with Fourier–Mukai kernel \(\cE \in \Db(X\times X', \alpha^{-1}\boxtimes \alpha')\).

\begin{definition}
The induced action of $\Psi_{\cE}$ on Chow groups is given by the correspondence
\begin{equation} \label{eq:fm-action-chow}
\Psi_{\cE}^{\CH_*}: \CH_*(X) \longrightarrow \CH_*(X'),\qquad 
z \longmapsto \pi_{2*}\bigl( \pi_1^*(z) \cdot \ch_{X\times X'}(\cE) \bigr),
\end{equation}
where \(\pi_1,\pi_2\) are the projections and \(\ch_{X\times X'}(\cE)\) is the twisted Chern character of the kernel. 
\end{definition}
Since \(\CH_*(X)\) is spanned by the twisted Chern characters \(\ch_{X,\alpha}(F)\) for \(F\in\Db(X,\alpha)\), this map can be simply realized as 
\[
\Psi_{\cE}^{\CH_*}: \CH_*(X) \longrightarrow \CH_*(X'),\qquad 
\ch_{X,\alpha}(F) \longmapsto \ch_{X',\alpha'}(\Psi_{\cE}(F))
\]
by Grothendieck-Riemann-Roch.

We now pass to cohomology. For an abelian surface $X$, the total cohomology $\rH^*(X,\mathbb{Q})$ splits naturally into even and odd parts:
\[
\rH^{\rm even}(X,\mathbb{Q}) = \rH^0(X,\mathbb{Q})\oplus \rH^2(X,\mathbb{Q})\oplus \rH^4(X,\mathbb{Q}),\quad
\rH^{\rm odd}(X,\mathbb{Q}) = \rH^1(X,\mathbb{Q})\oplus \rH^3(X,\mathbb{Q}).
\]
Via cup product we have isomorphisms $\bigwedge^{i}\rH^1(X,\mathbb{Q})\cong\rH^{i}(X,\mathbb{Q})$ and consequently,
\[
\rH^{\rm even}(X,\mathbb{Q}) \cong \bigwedge^{\rm even}\rH^1(X,\mathbb{Q}),\quad
\rH^{\rm odd}(X,\mathbb{Q}) \cong \bigwedge^{\rm odd}\rH^1(X,\mathbb{Q}).
\]

\begin{definition}
The cohomological action of $\Psi_{\cE}$ is the map
\begin{equation} \label{eq:fm-action-cohomology}
\Psi_{\cE}^{\rH}\colon \rH^*(X,\mathbb{Q}) \longrightarrow \rH^*(X',\mathbb{Q}),\qquad 
z \longmapsto \pi_{2*}\bigl( \pi_1^*(z) \cdot \ch_{X\times X'}(\cE) \bigr).
\end{equation}
Since $\ch_{X\times X'}(\cE)$ is an even cohomology class, this map preserves the parity of cohomology:
\[
\Psi_{\cE}^{\rH} = \Psi_{\cE}^{\rH^{\mathrm{even}}} \oplus \Psi_{\cE}^{\rH^{\mathrm{odd}}}.
\]
\end{definition}

For a twisted abelian surface $(X,\alpha)$ with a $\mathbf{B}$-field lift $B$, the twisted Mukai lattice (with rational coefficients) is identified with the even cohomology:
\[
\widetilde{\rH}(X,\alpha,\mathbb{Z})\otimes \mathbb{Q} \;\cong \; \rH^{\rm even}(X,\mathbb{Q}),
\]
and similarly the twisted Orlov lattice is identified with the odd cohomology:
\[
\widehat{\rH}(X,\alpha,\mathbb{Z})\otimes \mathbb{Q} \;\cong\; \rH^{\rm odd}(X,\mathbb{Q}).
\]
Restricting \cref{eq:fm-action-cohomology} to the even part yields an integral Hodge isometry on the twisted Mukai lattices:
\begin{equation}
\Psi_{\cE}^{\widetilde{\rH}}: \widetilde{\rH}(X,\alpha,\mathbb{Z}) \longrightarrow \widetilde{\rH}(X',\alpha',\mathbb{Z}),\qquad 
v(F) \longmapsto v\bigl(\Psi_{\cE}(F)\bigr).
\end{equation}
Similarly, the induced map on odd cohomology restricts to an integral Hodge isometry
\[
\Psi_{\cE}^{\widehat{\rH}}: \widehat{\rH}(X,\alpha,\mathbb{Z}) \longrightarrow \widehat{\rH}(X',\alpha',\mathbb{Z}),\qquad 
w \longmapsto \pi_{2*}\bigl( \pi_1^*(w) \cdot \ch_{X\times X'}(\cE) \bigr),
\]
where $w$ is viewed as an odd cohomology class via the identification $\widehat{\rH}(X,\alpha,\mathbb{Q})\cong \rH^{\mathrm{odd}}(X,\mathbb{Q})$.
Such an isometry can be realized as
\begin{equation}
\Psi_{\cE}^{\widehat{\rH}}: \widehat{\rH}(X,\alpha,\mathbb{Z}) \longrightarrow \widehat{\rH}(X',\alpha',\mathbb{Z}),\qquad  w \longmapsto (f_{\cE}^*)^{-1}(w).
\end{equation}
Here $f_{\cE}$ is the symplectic isomorphism associated with $\Psi_{\cE}$; see \Cref{thm:de-orlov}.

Choose \textbf{B}-field lifts $B$ and $B'$ of $\alpha$ and $\alpha'$. Then $\Psi^{\widetilde{\rH}}$ sends the twisted $(2,0)$-line $\CC\cdot \exp(B)\sigma_X$ to $\CC\cdot \exp(B')\sigma_{X'}$. We define $\Psi^{2,0}$ by the scalar relation
\begin{equation}
\Psi^{\widetilde{\rH}}(\exp(B)\sigma_X)=\exp(B')\Psi^{2,0}(\sigma_X).
\end{equation}
This action determines the induced Hodge isometry on the twisted transcendental lattices
\[
\rT(X,\alpha) \coloneqq \widetilde{\rH}_{\rm alg}(X,\alpha,\ZZ)^\perp\subset \widetilde{\rH}(X,\alpha,\ZZ),
\]
because $\rT(X,\alpha)_{\QQ}$ is the smallest rational Hodge substructure containing the twisted $(2,0)$-line. Thus the formulation of \Cref{conj:autoeq} is equivalent to
\[
\Psi_1^{\tr} = \Psi_2^{\tr} \quad \Longleftrightarrow \quad \Psi_1^{\CH_0} = \Psi_2^{\CH_0},
\]
where $\Psi_i^{\tr} \colon \rT(X,\alpha) \to \rT(X',\alpha')$ are the induced Hodge isometries on the transcendental lattices as above.

These cohomological actions are compatible with the Chow action via the cycle class map, and will be frequently used later.

\subsection{Equivariant models for twisted derived categories} \label{subsection:equivariant-category}

Following Beckmann--Oberdieck \cite[\S 2.1]{BO23}, an action of a finite group \(G\) on a triangulated category \(\cD\) consists of two collections of data \((\rho,a)\):

\begin{enumerate}[label=(\roman*)]
    \item For each \(g \in G\), an \emph{autoequivalence} \(\rho_g \colon \cD \to \cD\),
    \item For each pair \(g, h \in G\), a \emph{natural isomorphism} \(a_{g,h} \colon \rho_g \circ \rho_h \to \rho_{gh}\) satisfying the cocycle condition.
\end{enumerate}

In our specific setting, let \((X,\alpha)\) be a twisted abelian surface with Brauer class \(\alpha\) of order \(r\).  Choose a normalized \(2\)-cocycle \(a\in\mathsf{Z}^2(G,\GG_m)\) representing \(\alpha\), where \(G = X[r]\) is the group of \(r\)-torsion points.  For each \(g\in G\), denote by \(t_g: X\to X\) the translation map \(x\mapsto x+g\). Define
\[
\rho_g = \bL t_g^* : \Db(X) \to \Db(X),\qquad g\in G,
\]
the derived pullback along translation by \(g\).  Then the \emph{equivariant derived category} \(\Db(X)_{G,\rho,a}\) consists of objects \((E,\theta)\) where \(E\in\Db(X)\) and \(\theta=\{\theta_g:\rho_g(E)\to E\}_{g\in G}\) satisfies a compatibility condition involving the cocycle \(a\) (see \cite[\S 2]{LLT25} for details).  Morphisms are those that commute with the linearizations.

The following result is the key input for the equivariant model used in the Kummer descent.

\begin{proposition}[{\cite[Corollary 2.6]{LLT25}}]
For a twisted abelian surface \((X,\alpha)\) with \(\alpha\) represented by the cocycle \(a\in\mathsf{Z}^2(G,\GG_m)\), there is a canonical equivalence
\[
\Db(X,\alpha) \simeq \Db(X)_{G,\rho,a}.
\]
\end{proposition}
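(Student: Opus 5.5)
The approach is to realize $\alpha$ through the Galois cover given by multiplication by $r$, and then identify $\alpha$-twisted sheaves with $a$-twisted $G$-equivariant sheaves on the cover via descent. Consider $\mathbf{r}_X\colon X\to X$. It is an étale Galois cover with group $G=X[r]$, and $G$ acts on the source by the translations $t_g$. Since $\mathbf{r}_X^*$ acts on $\rH^2$, and hence on $\Br(X)$, as multiplication by $r^2$, the class $\alpha$ lies in $\ker(\mathbf{r}_X^*\colon\Br(X)\to\Br(X))$.

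\textbf{Step 1 (cohomological input).} I would use the Hochschild--Serre spectral sequence $\rH^p(G,\rH^q_{\et}(X,\GG_m))\Rightarrow \rH^{p+q}_{\et}(X,\GG_m)$ for this cover. I would use it to show that $\alpha$ lies in the image of $\rH^2(G,\rH^0(X,\cO_X^*))=\rH^2(G,\CC^*)$. To check that this image class is the one represented by $a$, I would compute its commutator pairing, i.e. its image in $\Hom(\wedge^2 G,\mu_r)$, and compare it with $e_\alpha$ through the Kummer-sequence description of $\delta$. This comparison is the compatibility that the connecting map $\delta$ encodes. The remaining ambiguity lies in $\rH^1(G,\Pic(X))$-type terms, which correspond to the $\iota(\NS(X)/r)$ part. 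I would absorb it by noting that changing $a$ within its commutator class changes the cocycle only by terms that give equivalent equivariant categories, for instance through twisting by a line bundle.

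\textbf{Step 2 (abelian level).} Fix an Azumaya algebra, or a Čech cocycle, representing $\alpha$ that becomes trivial after pullback along $\mathbf{r}_X$. Any trivialization of the pulled-back gerbe differs from its $t_g$-translates by isomorphisms whose failure to compose is exactly $a_{g,h}$. Pulling back an $\alpha$-twisted sheaf $F$ along $\mathbf{r}_X$ therefore gives an untwisted sheaf $E$ together with isomorphisms $\theta_g\colon t_g^*E\to E$ satisfying the $a$-twisted cocycle condition. Conversely, étale descent for the $G$-torsor $\mathbf{r}_X$ recovers $F$ from $(E,\theta)$. This would give an equivalence of abelian categories $\Coh(X,\alpha)\simeq\Coh(X)_{G,a}$ between $\alpha$-twisted sheaves and $a$-twisted $G$-equivariant sheaves.

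\textbf{Step 3 (derived level).} It remains to compare $\Db(\Coh(X)_{G,a})$ with the equivariant triangulated category $\Db(X)_{G,\rho,a}$ in the sense of Beckmann--Oberdieck. Since $G$ is finite and we are in characteristic $0$, the order $|G|$ is invertible. Averaging over $G$, with $a$-twisted weights, then gives a splitting of the forgetful functor. Elagin's descent theorem, or its twisted variant, would then show that the natural functor is an equivalence.

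I expect the main obstacle to be Step 1, specifically pinning down that the class in $\rH^2(G,\CC^*)$ produced by descent is exactly the cocycle $a$ fixed by $e_\alpha$, with the correct sign and orientation conventions for the Weil pairing, rather than merely some class with the same image in $\Br(X)$. Step 3 is essentially formal.
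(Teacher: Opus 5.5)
Your overall route is the expected one: \'etale descent along the $X[r]$-torsor $\mathbf{r}_X$ gives $\Coh(X,\alpha)\simeq\Coh(X)_{G,a}$, and an Elagin-type comparison then uses that $|G|$ is invertible. The paper gives no argument of its own; it cites Li--Lu--Tang and uses the equivalence ``induced by the \'etale covering $\mathbf{r}_X$''. Step 3 is fine: with $a$ taken $\mu_r$-valued, the $a$-twisted equivariant category is a summand of the honestly equivariant category for a finite central extension of $G$, so the untwisted theorem applies.

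The genuine gap is Step 1, which is where the content lies. Knowing $\alpha\in\ker(\mathbf{r}_X^*)$ and running Hochschild--Serre with $\GG_m$-coefficients does not by itself put $\alpha$ in the image of $\rH^2(G,\CC^*)$. Indeed $\mathbf{r}_X^*$ is multiplication by $r^2$ on the divisible group $\Br(X)$, so $\ker(\mathbf{r}_X^*)=\Br(X)[r^2]$. But $\rH^2(G,\CC^*)\cong\Hom(\wedge^2G,\CC^*)$ has exponent $r$. So the component of $\alpha$ in $E_\infty^{1,1}\subseteq\rH^1(G,\Pic(X))$ is a genuine obstruction, nonzero for classes of order $r^2$. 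It cannot be ``absorbed''; you must show it vanishes using that $\alpha$ is $r$-torsion.

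You have conflated this obstruction with the $\NS(X)/r\NS(X)$ ambiguity, which is something else: it is the kernel of the edge map $\rH^2(G,\CC^*)\to\Br(X)$. That kernel is the image of $d_2\colon\Pic(X)^G\to\rH^2(G,\CC^*)$, which sends $L^{\otimes r}$ to its theta-group class, with commutator $e^{L^{\otimes r}}|_{X[r]}$. Only this part is removed by tensoring with $L^{\otimes r}$ and its projective $G$-linearization. For abelian $G$ the commutator map $\rH^2(G,\CC^*)\to\Hom(\wedge^2G,\CC^*)$ is an isomorphism, so changing $a$ ``within its commutator class'' is only a coboundary.

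Step 2 has the same imprecision. The translates $t_g^*\tau$ of a trivialization $\tau$ of the pulled-back gerbe differ from $\tau$ by line bundles $L_g$. These can be made trivial only once the $E^{1,1}$-obstruction is known to vanish. Even then, the resulting cocycle is $a$ only modulo coboundaries and $d_2(\Pic(X)^G)$, not ``exactly $a$''.

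The missing input is the comparison with $\mu_r$-coefficients, and it also settles what you call the main obstacle.
\begin{enumerate}
\item Since $[a]$ is determined by $e_\alpha$, you may take $a=b$ bilinear and $\mu_r$-valued with $b(\sigma_1,\sigma_2)/b(\sigma_2,\sigma_1)=e_\alpha(\sigma_1,\sigma_2)$.
\item Edge maps are natural in the coefficient sheaf. So the image of $[a]$ in $\Br(X)$ is the Kummer image of the inflation of $[b]$ to $\rH^2(X,\mu_r)=\rH^2(\pi_1(X),\mu_r)$.
\item Because $\pi_1(X)$ is free abelian, its central extensions are classified by commutator pairings, giving $\rH^2(\pi_1(X),\mu_r)=\Hom(\wedge^2\pi_1(X),\mu_r)$.
\item Inflation along $\pi_1(X)\to\pi_1(X)/r\pi_1(X)\cong X[r]$ simply pulls back the commutator pairing. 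Hence the inflation of $[b]$ is $e_\alpha$, and the image of $[a]$ is $\delta(e_\alpha)=\alpha$, up to one universal sign fixed by conventions.
\end{enumerate}

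With this in hand, run Step 2 in the reverse direction. Let $\widetilde G_a$ be the central extension of $G$ by $\GG_m$ defined by $a$; it acts on $X$ through $G$. The quotient $[X/\widetilde G_a]$ is the gerbe of liftings of the $G$-torsor $\mathbf{r}_X$ to $\widetilde G_a$, whose class is the image of $[a]$, i.e.\ $\alpha$. The $a$-twisted $G$-equivariant sheaves on the cover are exactly the weight-one sheaves on this gerbe, i.e.\ $\alpha$-twisted sheaves. This way no trivialization has to be chosen and no line-bundle correction is needed.
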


In this paper, the derived category of a twisted abelian surface will be analyzed through this equivariant model.

\section{Derived Torelli theorems for twisted abelian surfaces} \label{sec:derived-torelli}

Torelli-type questions have been studied for (twisted) abelian varieties in various settings; see for example \cite{Orlov, YoshiokaBir, LZ25}. For abelian surfaces, two types of derived Torelli theorems are known, depending on which cohomological lattice one uses:
\begin{itemize}
    \item Using the \emph{Orlov lattice} (odd cohomology \(\rH^1(X,\mathbb{Z}) \oplus \rH^1(\widehat{X},\mathbb{Z})\)), see \cite{Orlov}.
    \item Using the \emph{Mukai lattice} (even cohomology \(\rH^{\mathrm{even}}(X,\mathbb{Z})\)), partial results are available; see e.g. \cite{LZ25}.
\end{itemize}
In this section, we review the Orlov-lattice description of twisted derived equivalences and then prove the Mukai-lattice Torelli statement that will be used to lift birational actions in later sections.

\subsection{The Kapustin--Orlov conjecture via twisted symplectic abelian varieties} \label{subsection:symplectic}

Let $(X,\alpha)$ be a twisted abelian surface, where $\alpha\in\Br(X)$ is a Brauer class of order $r$.  Using the Weil pairing, one associates to $(X,\alpha)$ a canonical symplectic abelian variety:
\begin{equation}\label{eq:symplectic-variety}
A_{X,\alpha} \coloneqq (X \times \widehat{X}) / \Gamma_\phi,
\end{equation}
where $\phi : X[r] \to \widehat{X}[r]$ is the skew-symmetric homomorphism induced by the Brauer class $\alpha$, and $\Gamma_\phi\subseteq X[r]\times \widehat{X}[r]$ is the graph of $\phi$.  When $\alpha=0$ (untwisted case), we have $A_{X}=X\times\widehat{X}$.

The symplectic abelian variety $A_{X,\alpha}$ is the geometric realization of the twisted Orlov lattice. More precisely, its first integral cohomology, together with its Hodge structure, is identified with $\bigl(\widehat{\rH}(X,\alpha,\ZZ),\cJ_{\alpha}\bigr)$; see \cite[Proposition 6.3]{LLT25}.
The canonical pairing on the twisted Orlov lattice determines an isomorphism 
\[
\eta_{X,\alpha} \colon A_{X,\alpha} \longrightarrow \widehat{A}_{X,\alpha}.
\]
We call an isomorphism $f \colon A_{X,\alpha} \to A_{X',\alpha'}$ \emph{symplectic} if $\widehat{f} \circ \eta_{X',\alpha'} \circ f = \eta_{X,\alpha}$.

The following result, proved in \cite[Theorem 1.1]{LLT25}, characterizes twisted derived equivalences in terms of these symplectic abelian varieties and thereby confirms the conjecture of Kapustin and Orlov \cite[Remark 2.8]{KO}.

\begin{theorem}\label{thm:de-orlov}
There exists a derived equivalence $\Db(X,\alpha)\simeq\Db(X',\alpha')$ if and only if there is a symplectic isomorphism
\[
f \colon A_{X,\alpha}\xlongrightarrow{\sim} A_{X',\alpha'}.
\]
In particular, any autoequivalence of $\Db(X,\alpha)$ gives rise to a symplectic automorphism of $A_{X,\alpha}$.
\end{theorem}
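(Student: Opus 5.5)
This statement is cited from \cite[Theorem 1.1]{LLT25}; we only sketch how one would prove it, using the equivariant model of \Cref{subsection:equivariant-category} as the main tool.

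\textbf{Step 1: from an equivalence to a symplectic isomorphism.}
Let $\Psi\colon\Db(X,\alpha)\to\Db(X',\alpha')$ be an equivalence with kernel $\cE$. Following Orlov's untwisted argument, form the induced equivalence
\[
\Psi\times\Psi^{\vee}\colon \Db(X\times X,\alpha^{-1}\boxtimes\alpha)\longrightarrow \Db(X'\times X',\alpha'^{-1}\boxtimes\alpha').
\]
This functor conjugates autoequivalences: $\Phi\mapsto\Psi\circ\Phi\circ\Psi^{-1}$.

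Consider the family of autoequivalences $t_{x*}(-)\otimes P_\xi$ of $\Db(X,\alpha)$. In the twisted setting these are only defined up to the twist, so the natural parameter space is not $X\times\widehat X$. Instead we use the group of pairs $(x,\xi)$ for which $t_x^*\alpha\otimes\alpha^{-1}$ is compensated by $P_\xi$. The kernels of these functors form a family of sheaves on $X\times X$. We would show that this family is parametrized exactly by the quotient $A_{X,\alpha}=(X\times\widehat X)/\Gamma_\phi$; here the graph of $\phi$ records when translation by an $r$-torsion point is isomorphic to tensoring by a torsion line bundle on $\alpha$-twisted sheaves.

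Conjugation by $\Psi$ sends this family to the corresponding family on $X'$, since its members are characterized intrinsically: they are the autoequivalences whose kernels are deformations of the diagonal. This gives an isomorphism of abelian varieties $f_\cE\colon A_{X,\alpha}\to A_{X',\alpha'}$. It is symplectic because $\eta$ is defined by the commutator pairing of this Heisenberg-type family, and conjugation preserves commutators.

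\textbf{Step 2: from a symplectic isomorphism to an equivalence.}
Use the equivariant model $\Db(X,\alpha)\simeq\Db(X)_{G,\rho,a}$ with $G=X[r]$. The multiplication map $X\to X$ identifies $\Db(X,\alpha)$ with a twisted-equivariant category of the untwisted abelian surface. Under this identification, $A_{X,\alpha}$ is isogenous to $X\times\widehat X$, and the symplectic form lifts to the standard one.

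Given a symplectic isomorphism $f$, lift it to a symplectic isomorphism of $X\times\widehat X$ up to isogeny. This should yield a Lagrangian-type correspondence whose kernel, by Orlov--Polishchuk, is a semihomogeneous sheaf. We then average it over $G$, together with the cocycles $a,a'$, to obtain a kernel in $\Db(X\times X',\alpha^{-1}\boxtimes\alpha')$. Finally we check that this kernel defines an equivalence by verifying full faithfulness on the equivariant categories.

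\textbf{Main obstacle.}
The hardest step is the converse direction: matching the cocycles so that the averaged kernel is genuinely $\alpha^{-1}\boxtimes\alpha'$-twisted and not twisted by some other class. Concretely, one must show that the obstruction class in $\rH^2(G\times G',\GG_m)$ vanishes exactly because $f$ preserves $\eta$. I would first attempt this via the Weil-pairing description of $e_\alpha$ together with the commutator-form interpretation of $\eta_{X,\alpha}$.

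The final assertion of the statement, that autoequivalences give symplectic automorphisms of $A_{X,\alpha}$, is then immediate from Step 1 applied with $X'=X$ and $\alpha'=\alpha$.
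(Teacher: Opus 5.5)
The paper gives no argument for this statement: it is quoted from \cite[Theorem 1.1]{LLT25}, so your opening citation is exactly what the paper does. Read as a proof, though, your sketch leaves the ``if'' direction open. Transporting a symplectic isomorphism $f\colon A_{X,\alpha}\to A_{X',\alpha'}$ along the isogenies $X\times\widehat X\to A_{X,\alpha}$ and $X'\times\widehat{X'}\to A_{X',\alpha'}$ only gives a symplectic isomorphism in the isogeny category. The semihomogeneous sheaf $E$ whose stabilizer is the graph of this rational correspondence does not, in general, give an equivalence of untwisted categories. Everything therefore rests on the descent step, where you must show three things. First, that $G\times G'$ stabilizes $E$. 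Second, that the commutator pairing of the theta group of $E$, restricted to $G\times G'$, is $e_\alpha^{-1}\boxtimes e_{\alpha'}$; since $\rH^2(G\times G',\CC^*)\cong\Hom(\wedge^2(G\times G'),\CC^*)$, this is equivalent to $E$ carrying an $(a^{-1}\boxtimes a')$-twisted linearization. Third, that the descended kernel is fully faithful. That is the actual content of the Kapustin--Orlov conjecture. You name it as the ``main obstacle'' but give no argument, so the converse is unproved.

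Step 1 also needs repair.
\begin{enumerate}
\item Since $\gamma\mapsto\Phi_\gamma$ is a homomorphism from the abelian group $A_{X,\alpha}$, the $\Phi_\gamma$ commute up to isomorphism. Moreover, any $\CC^*$-valued bimultiplicative pairing on $A_{X,\alpha}$ is trivial by rigidity. So $\eta_{X,\alpha}$ is not a commutator pairing of this family that conjugation could ``preserve.''
\item Symplecticity has to come from finer data. One route: show that the graph of $f_\cE$ is the stabilizer of the kernel $\cE$ in $A_{X,\alpha}\times A_{X',\alpha'}$, and that this stabilizer is isotropic for $\eta_{X,\alpha}\oplus(-\eta_{X',\alpha'})$, because the commutator of the theta group of $\cE$ is trivial on this connected group by rigidity. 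Another route: identify $f_\cE^*$ with the odd cohomological Fourier--Mukai action, which is an isometry of twisted Orlov lattices.
\item Your explanation of $\Gamma_\phi$ is off. On $X$ itself, translation by a nonzero torsion point moves skyscraper sheaves, so it is never isomorphic to a tensor functor. Likewise, ``$t_x^*\alpha\otimes\alpha^{-1}$ compensated by $P_\xi$'' is ill-posed, because $t_x^*\alpha=\alpha$ in $\Br(X)$ and $P_\xi$ is untwisted.
\item The relation $\Phi_{(g,\phi(g))}\cong\id$ is about the deck translations by $g\in X[r]$ on the $r$-cover in the equivariant model. These lie over the identity of $X$ and only change the linearization by the character $e_\alpha(g,-)$. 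Accordingly, the projection $A_{X,\alpha}\to X$ is $[(u,\nu)]\mapsto ru$, with kernel $\widehat X$.
\end{enumerate}
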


To see how such symplectic isomorphisms arise from Fourier--Mukai transforms, recall the untwisted case first. Let $X,X'$ be abelian surfaces. Given a derived equivalence $\Psi \colon \Db(X)\to\Db(X')$, Orlov constructed an induced symplectic isomorphism (\cf \cite[Theorem 2.10]{Orlov})
\[
f_\Psi\colon X\times\widehat{X} \xlongrightarrow{\sim} X'\times\widehat{X'}.
\]
The assignment $\Psi \mapsto f_\Psi$ is a group homomorphism.  Its image
\[
\rU(X\times\widehat{X},X'\times\widehat{X'}) \coloneqq \operatorname{im}\bigl(\operatorname{Eq}(\Db(X),\Db(X'))\to \operatorname{Iso}(X\times\widehat{X},X'\times\widehat{X'})\bigr)
\]
is the \emph{Mukai-Polishchuk group}.  When $X=X'$, we write it as $\rU(X\times\widehat{X})$.  There is a short exact sequence
\[
0\longrightarrow \mathbb{Z}\times X\times\widehat{X} \longrightarrow \Aut(\Db(X)) \longrightarrow \rU(X\times\widehat{X}) \longrightarrow 1,
\]
where the kernel consists of functors of the form $\bL t_a^*(\,\cdot\,)\otimes^\bL D [m]$ with $a\in X$, $D\in\Pic^0(X)$ and $m\in\mathbb{Z}$.

For twisted abelian surfaces, a parallel construction exists using equivariant Fourier--Mukai kernels \cite{LLT25}. For each $\gamma = [(u,\nu)] \in A_{X,\alpha}$, we can associate a natural equivariant Fourier--Mukai transform as in the untwisted case.
\begin{itemize}
    \item For any $u \in X$, the compatibility $t_{u*}t_g^* = t_g^*t_{u*}$ naturally holds, and there is an induced autoequivalence
    \[
    t_{u*} \colon \Db(X)_{G,\rho,a} \longrightarrow \Db(X)_{G,\rho,a}.
    \]
    \item For any $\nu \in \widehat{X}$, the corresponding line bundle $\sP_\nu$ consists of the data $(\sP_\zeta, \chi_\nu)$, where $\zeta = \mathrm{r}_{\widehat{X}}(\nu)$ and $\chi_\nu(g)\colon t_g^*\sP_\zeta \to \sP_\zeta$ are descent morphisms satisfying certain compatibility conditions \cite[\S 4.2]{LLT25}. This yields an autoequivalence
    \[
    -\otimes (\sP_\zeta, \chi_\nu) \colon \Db(X)_{G,\rho,a} \longrightarrow \Db(X)_{G,\rho,a}.
    \]
\end{itemize}
It is shown in \cite[\S 4.2]{LLT25} that the Fourier--Mukai kernel of the functor
\[
\Phi_{(u,\nu)} \coloneqq t_{u*}(-) \otimes (\sP_\zeta, \chi_\nu) \colon \Db(X)_{G,\rho,a} \longrightarrow \Db(X)_{G,\rho,a}
\]
is a $(G\times G)$-linearised object.
\begin{definition}
For an element $\gamma = [(u,\nu)]\in A_{X,\alpha}$, define
\[
\Phi_\gamma \in \Aut\bigl(\Db(X,\alpha)\bigr)
\]
by the commutative diagram
\[
\begin{tikzcd}[sep=large]
\Db(X,\alpha) \ar[r,"\Phi_\gamma"] \ar[d,"\simeq"] & \Db(X,\alpha) \ar[d,"\simeq"] \\
\Db(X)_{G,\rho,a} \ar[r,"\Phi_{(u,\nu)}"] & \Db(X)_{G,\rho,a},
\end{tikzcd}
\]
where the vertical equivalences are those induced by the \'etale covering $\mathbf{r}_X$. By \cite[Proposition~4.5]{LLT25}, $\Phi_\gamma$ is independent of the chosen representative $(u,\nu)$, and thus is well-defined.
\end{definition}

\begin{definition}
For twisted abelian surfaces $(X,\alpha)$ and $(X',\alpha')$, define
\[
\rU\bigl(A_{X,\alpha},\,A_{X',\alpha'}\bigr) := \operatorname{im}\Bigl( \operatorname{Eq}\bigl(\Db(X,\alpha),\,\Db(X',\alpha')\bigr) \longrightarrow \operatorname{Iso}\bigl(A_{X,\alpha},\,A_{X',\alpha'}\bigr) \Bigr),
\]
where the map is given by \Cref{thm:de-orlov}.
When $(X,\alpha)=(X',\alpha')$, we set $\rU(A_{X,\alpha}) := \rU(A_{X,\alpha},A_{X,\alpha})$ and call it the \emph{twisted Mukai-Polishchuk group} of $(X,\alpha)$.
\end{definition}

By \cite[Proposition 4.5 \& Proposition 6.1]{LLT25}, there is an injective homomorphism
\[
A_{X,\alpha} \hookrightarrow \Aut\bigl(\Db(X,\alpha)\bigr), \quad \gamma \longmapsto \Phi_\gamma,
\]
and the kernel of the map $\Aut(\Db(X,\alpha)) \to \rU(A_{X,\alpha})$ is generated by the shift functor $[1]$ together with all autoequivalences $\Phi_{\gamma}$ for $\gamma \in A_{X,\alpha}$. We summarise their results as follows.

\begin{proposition}\label{prop:twisted-MP}
There is a short exact sequence
\[
0 \longrightarrow \mathbb{Z}\times A_{X,\alpha} \longrightarrow
\Aut\bigl(\Db(X,\alpha)\bigr) \longrightarrow 
\rU(A_{X,\alpha}) \longrightarrow 0.
\]
Consequently, the subgroup $\Aut_0\bigl(\Db(X,\alpha)\bigr)$ of autoequivalences acting trivially on cohomology is isomorphic to $2\mathbb{Z}\times A_{X,\alpha}$.
\end{proposition}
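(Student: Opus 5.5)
The short exact sequence itself is a repackaging of the results cited just before the statement, so I would only assemble them; the real content is the "consequently" clause. First, the map $\Aut(\Db(X,\alpha))\to\rU(A_{X,\alpha})$ is surjective by the definition of $\rU(A_{X,\alpha})$ as the image. By \cite[Proposition 4.5 \& Proposition 6.1]{LLT25}, its kernel is generated by $[1]$ together with the $\Phi_\gamma$, $\gamma\in A_{X,\alpha}$, and $\gamma\mapsto\Phi_\gamma$ is injective.

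To identify the kernel with $\mathbb{Z}\times A_{X,\alpha}$, I would check three things. First, $[1]$ commutes with every $\Phi_\gamma$, which is clear since $\Phi_\gamma$ is exact. Second, $[1]$ has infinite order. Third, $\langle[1]\rangle\cap\{\Phi_\gamma\}$ is trivial. For the third point, I would use that each $\Phi_\gamma=t_{u*}(-)\otimes(\sP_\zeta,\chi_\nu)$ sends $\Coh(X,\alpha)$ to itself, hence preserves the standard heart, while $[m]$ with $m\neq0$ does not. Hence the kernel is the direct product $\mathbb{Z}\times A_{X,\alpha}$, which gives the exact sequence.

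For $\Aut_0$, I would compute the cohomological action of each kernel element and then show that elements acting trivially on cohomology lie in the kernel.

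\textbf{Shifts.} $[1]$ acts on $\rK_0$, hence on $\rH^*$ via $\ch$, by $-1$. So $[m]$ acts by $(-1)^m$, which is trivial exactly when $m$ is even.

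\textbf{Translations and twists.} $\Phi_\gamma$ acts trivially on $\rH^*(X,\QQ)$. Since $A_{X,\alpha}$ is connected and the action on integral cohomology is discrete, the family $\gamma\mapsto\Phi_\gamma$ acts by a constant map. The constant is the value at $\gamma=0$, namely the identity. Alternatively, one can argue directly: translations act trivially on cohomology, and tensoring with a twisted line bundle whose underlying untwisted class lies in $\Pic^0$ has trivial Chern character correction.

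\textbf{Converse.} An autoequivalence acting trivially on $\rH^*$ (even and odd parts) maps to $\id\in\rU(A_{X,\alpha})$. This is because the symplectic isomorphism $f_\Psi$ is recovered from $\Psi^{\widehat{\rH}}=(f_\Psi^*)^{-1}$ on $\rH^1(A_{X,\alpha})\cong\widehat{\rH}(X,\alpha,\ZZ)$, and an isomorphism of abelian varieties is determined by its action on $\rH^1$. Hence such an autoequivalence is in the kernel, and by the shift computation it equals $[2k]\circ\Phi_\gamma$. This gives $\Aut_0\cong2\mathbb{Z}\times A_{X,\alpha}$.

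The main obstacle I expect is the triviality of the cohomological action of $\Phi_\gamma$ in the twisted setting. There the twisted line bundle $(\sP_\zeta,\chi_\nu)$ is only defined equivariantly, so its twisted Chern character is not literally $1$. I would handle this with the deformation/connectedness argument: the kernels $\Phi_{\gamma}$ form a family over the connected base $A_{X,\alpha}$, their Chern characters are locally constant in cohomology, and at $\gamma=0$ the kernel is the diagonal.
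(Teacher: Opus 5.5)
Your proof is correct, and for the exact sequence it follows the paper's route. The paper gives no separate proof: it records \cite[Proposition 4.5 \& Proposition 6.1]{LLT25}, namely injectivity of $\gamma\mapsto\Phi_\gamma$ and generation of the kernel by $[1]$ and the $\Phi_\gamma$. Your check that the kernel is an internal direct product is the small verification implicit in writing $\mathbb{Z}\times A_{X,\alpha}$: the shift commutes with exact functors, and $\Phi_\gamma$ preserves $\Coh(X,\alpha)$ while $[m]$ with $m\neq 0$ does not.

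Where you go beyond the paper is the ``consequently'' clause, which the paper asserts without argument. Your derivation supplies the needed ingredients:
\begin{itemize}
\item shifts act by $(-1)^m$;
\item each $\Phi_\gamma$ is cohomologically trivial by constancy over a connected family;
\item conversely, trivial action on $\rH^{\mathrm{odd}}$ forces $f_\Psi=\id$ via $\Psi^{\widehat{\rH}}=(f_\Psi^*)^{-1}$.
\end{itemize}
The last point is the essential one. You were right to read ``cohomology'' as the full cohomology. For instance, $\bL\iota^*$ acts trivially on $\rH^{\mathrm{even}}$ but by $-1$ on $\rH^1$, and it maps to $-\id\in\rU(A_{X,\alpha})$. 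So the clause would be false if only the twisted Mukai lattice were tested.

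Two small corrections.

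First, the obstacle you flag is slightly misplaced. The $\chi_\nu$ are descent data, and $\mathbf{r}_X^*\sP_\nu\cong\sP_{r\nu}=\sP_\zeta$. So $(\sP_\zeta,\chi_\nu)$ is just the untwisted line bundle $\sP_\nu\in\Pic^0(X)$, and tensoring with it multiplies twisted Chern characters by $\ch(\sP_\nu)=1$. The genuinely twisted piece is $t_{u*}$ on the cover. On $X$ it is a lift of translation by $ru$ to $\Db(X,\alpha)$, since $\mathbf{r}_X\circ t_u=t_{ru}\circ\mathbf{r}_X$, and this is where your connectedness argument is really needed. The family is naturally parametrized by $X\times\widehat{X}$, which is connected, so you need not descend it to $A_{X,\alpha}$.

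Second, the shift's action on odd cohomology should be justified through its kernel $\cO_\Delta[1]$, whose Chern character is $-\ch(\cO_\Delta)$. The Chern character on $\rK_0$ only reaches even algebraic classes, so it does not cover the odd part.
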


\subsection{Admissible isometries} \label{subsection:admissible}
We now study the cohomological actions of twisted derived equivalences. For K3 surfaces, a Hodge isometry of the Mukai lattice is the natural input for the derived Torelli theorem. 
For abelian surfaces, this condition alone is not sufficient. We therefore introduce an admissibility condition, generalizing Shioda’s notion \cite{Shioda}.

\begin{definition}
Let $\{v_1,\dots,v_4\}$ be a basis of $\rH^1(X,\mathbb{Q})$.  For any ordered subset $I\subseteq\{1<2<3<4\}$, set
\[
v_I := \bigwedge_{i\in I} v_i,\qquad\text{with } v_\emptyset = -1.
\]
The collection $\Bigl\{ v_I \mid |I| = i \Bigr\}$ forms a basis of $\rH^{i}(X,\mathbb{Q})$; consequently, the collections
\[
\Bigl\{ v_I \mid |I| \text{ is even} \Bigr\} \quad\text{and}\quad \Bigl\{ v_I \mid |I| \text{ is odd} \Bigr\}
\]
give bases of $\rH^{\rm even}(X,\mathbb{Q})$ and $\rH^{\rm odd}(X,\mathbb{Q})$, respectively.

The basis $\{v_1,\dots,v_4\}$ is called \emph{admissible} (in the sense of Shioda) if
\[
\int_X v_1\wedge v_2\wedge v_3\wedge v_4 = 1.
\]
When this holds, the induced bases of $\rH^i(X,\QQ)$, $\rH^{\rm even}(X,\mathbb{Q})$ and $\rH^{\rm odd}(X,\mathbb{Q})$ are called \emph{admissible bases}.  
\end{definition}

In terms of an admissible basis $\{v_I\}$, the Mukai pairing on $\rH^{\rm even}(X,\QQ)$ and the Mukai-Polishchuk pairing on $\rH^{\rm odd}(X,\QQ)$ are computed by
\begin{equation}
\Bigl\langle \sum_{I} a_I v_I,\; \sum_{J} b_J v_J \Bigr\rangle = \int_X \left(\sum_{I} a_I v_I\right)\wedge\left(\sum_{J} b_J v_J\right),
\end{equation}
where the sums run over all $I,J$ of appropriate parity. Notice that the sign convention $v_{\emptyset} = -1$ is used here to match the Mukai pairing.

\begin{definition}
Let $X$ and $X'$ be two abelian surfaces.
\begin{itemize}
    \item An isometry $\varphi: \rH^{2}(X,\mathbb{Q}) \xrightarrow{\sim} \rH^{2}(X',\mathbb{Q})$ with respect to the cup product pairing is called \emph{admissible} if it preserves admissible bases (the image of an admissible basis is still an admissible basis). 
    \item An isometry $\varphi: \rH^{\rm even}(X,\mathbb{Q}) \xrightarrow{\sim} \rH^{\rm even}(X',\mathbb{Q})$ with respect to the Mukai pairing is called \emph{admissible} if it preserves admissible bases.  
    \item An isometry $\varphi: \rH^{\rm odd}(X,\mathbb{Q}) \xrightarrow{\sim} \rH^{\rm odd}(X',\mathbb{Q})$ with respect to the Mukai-Polishchuk pairing is called \emph{admissible} if it preserves admissible bases.  
    \item An isomorphism $\varphi: \rH^{\ast}(X,\mathbb{Q}) \xrightarrow{\sim} \rH^{\ast}(X',\mathbb{Q})$ is called \emph{admissible} if it preserves admissible bases. 
\end{itemize}
\end{definition}

\begin{example}\label{example:B-fields}
Tensoring by a line bundle $L$ acts on the cohomology ring by 
\[
x \longmapsto \exp(\ell)x, \quad \ell = c_1(L) \in \rH^2(X,\ZZ).
\]
With respect to any admissible basis of $\rH^{\even}(X,\QQ)$ or $\rH^{\odd}(X,\QQ)$ ordered by degree, multiplication by $\exp(\ell)$ is represented by a lower triangular matrix whose diagonal entries are all equal to $1$. Hence this action is admissible.

This implies that for different choices of $\mathbf{B}$-field lifts $B$ and $B'$ of a Brauer class, the isometry  
\begin{equation}
    \exp(B-B') \colon \widetilde{\rH}(X,B,\ZZ)\to \widetilde{\rH}(X,B',\ZZ)
\end{equation}
is admissible. Hence up to admissible Hodge isometry, $\widetilde{\rH}(X,\alpha,\ZZ)$ is uniquely defined. 
\end{example}

\begin{example}
Let $f\colon X \to Y$ be an isomorphism between abelian surfaces. By Shioda’s Torelli theorem \cite{Shioda}, the induced Hodge isometry $f^* \colon \rH^2(Y,\QQ) \to \rH^2(X,\QQ)$ is admissible in Shioda’s sense. On the hyperbolic subspace generated by $\rH^0(Y,\QQ)$ and $\rH^4(Y,\QQ)$, the action of $f$ is identity. Since the admissibility on the Mukai lattice is, by definition, equivalent to the admissibility on $\rH^2$ together with the orientation-preserving condition on the hyperbolic summand, the isometry $f^* \colon \widetilde{\rH}(Y,\QQ) \to \widetilde{\rH}(X,\QQ)$ is admissible.
\end{example}

\begin{example}[Poincar\'e bundle]
Let $\{v_i\}$ be an admissible basis. The cohomological Fourier--Mukai transform associated with the Poincar\'e bundle $\cP$ gives an isomorphism
\[
\rH^i(X,\QQ) \xlongrightarrow{\sim} \rH^{4-i}(\widehat{X},\QQ) = \rH^{4-i}(X,\QQ)^{\vee}, \quad \forall\ 0\le i\le 4.
\]
The dual basis $v_I^*$ with respect to the Poincar\'e pairing is therefore identified with $\varepsilon_J \cdot v_J$, where $I \cup J = \{1,2,3,4\}$, and the sign $\varepsilon_J$ is determined by
\[
\varepsilon_J = \varepsilon_J \cdot v_I^*(v_I) = \int_X v_J \wedge v_I.
\]
Calculation in \cite[Example 4.2.3]{LZ25} implies that the induced action of the Poincar\'e line bundle 
\[
\rH^2(X,\QQ) \longrightarrow \rH^2(\widehat{X},\QQ) \simeq \rH^2(X,\QQ)^{\vee}
\]
is not admissible. However, it acts as a reflection on the hyperbolic subspace generated by $\rH^0(X,\QQ)$ and $\rH^4(X,\QQ)$, and therefore extends to an admissible isometry on the Mukai lattice.
\end{example}

\subsection{Admissibility for twisted equivalences}

Let $V$ be a free abelian group of rank $4$. By fixing an admissible basis, we identify $\rH^1(X,\mathbb{Q}) \cong V_{\mathbb{Q}}$ for any abelian surface $X$. By definition, we have the following identifications:
\begin{itemize}
    \item The total cohomology decomposes as $\rH^*(X,\mathbb{Q}) \cong \bigwedge^\ast V_{\mathbb{Q}}$.
    \item The Mukai lattice (even part) and the Orlov lattice (odd part) are identified with $\bigwedge^{\mathrm{even}} V_{\mathbb{Q}}$ and $\bigwedge^{\mathrm{odd}} V_{\mathbb{Q}}$, respectively.
\end{itemize}
We call such an identification an \emph{admissible marking}. 
Given another abelian surface $X'$ with an admissible marking $\rH^{1}(X',\mathbb{Q}) \cong V_{\mathbb{Q}}$, 
the admissible markings allow us to identify the cohomologies of $X$ and $X'$ with the wedge product of $V_{\QQ}$. Via these identifications, the relevant sets of isomorphisms become subgroups of the corresponding linear groups:
\begin{itemize}
    \item Under the Mukai--Polishchuk pairing,  
    $\mathrm{U}\bigl(X\times\widehat{X}, X'\times\widehat{X'}\bigr)$
    is identified  as a subgroup  of $\mathrm{SO}(V_{\mathbb{Q}}\oplus V_{\mathbb{Q}}^{\vee})$ (\cf \cite[Proposition~4.3.7]{GLO}); see also \Cref{thm:admissibility}.
    \item The set of algebra isomorphisms 
    $\operatorname{Iso}\bigl(\rH^{*}(X,\mathbb{Q}),\rH^{*}(X',\mathbb{Q})\bigr)$
    is identified with a subgroup of $\operatorname{GL}\bigl(\bigwedge^\ast V_{\mathbb{Q}}\bigr)$.
\end{itemize}

\begin{remark}
Under the identification provided by a relative admissible basis, 
the induced cohomological action of a relative Fourier--Mukai kernel is given by a locally constant matrix; 
consequently, admissibility is invariant under deformation.
\end{remark}

The following theorem is the key technical input connecting derived equivalences with
admissible Hodge isometries.

\begin{theorem} \label{thm:admissibility}
Let $\Psi$ be a derived (anti-)equivalence between $\Db(X,\alpha)$ and $\Db(X',\alpha')$. The induced isometry 
\[
\Psi^{\widetilde{\rH}}:\widetilde{\rH}(X,\alpha,\ZZ)\to \widetilde{\rH}(X',\alpha',\ZZ)
\]
is admissible and orientation-preserving for equivalences, orientation-reversing for anti-equivalences.
\end{theorem}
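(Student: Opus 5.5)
Our plan is to reduce everything to the untwisted case on the étale cover $\mathbf{r}_X$ and then to explicit generators, exploiting the fact that admissibility and orientation are discrete, deformation-invariant conditions (see the remark preceding the theorem).

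\textbf{Step 1: equivalences.} For a derived equivalence $\Psi\colon\Db(X,\alpha)\to\Db(X',\alpha')$, the cohomological action on $\rH^*(X,\QQ)\cong\bigwedge^\ast V_\QQ$ is governed by the spinor-type representation. The even part and the odd part both come from a single element $\tilde f$ of the Clifford/spin group of $V_\QQ\oplus V_\QQ^\vee$, which lifts the symplectic isomorphism $f_\Psi\colon A_{X,\alpha}\to A_{X',\alpha'}$ of \Cref{thm:de-orlov}.

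In the untwisted case this is the theorem of Mukai–Orlov–Golyshev–Lunts–Orlov. The Fourier–Mukai action on $\bigwedge^\ast V$ is the spinor representation of $\mathrm{Spin}(V\oplus V^\vee)$, and $f_\Psi$ lies in $\SO(V_\QQ\oplus V_\QQ^\vee)$ by \cite[Proposition~4.3.7]{GLO}. Elements of $\mathrm{Spin}$ act on the half-spin representations $\bigwedge^{\even}V$ and $\bigwedge^{\odd}V$ with determinant $1$. This is exactly the statement that they send an admissible basis to an admissible basis: $\det=1$ is equivalent to preserving the volume $v_1\wedge\cdots\wedge v_4$ together with orientation of the hyperbolic summand.

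For the twisted case we use the equivariant model $\Db(X,\alpha)\simeq\Db(X)_{G,\rho,a}$. The kernel of $\Psi$ is a $(G\times G')$-linearised object on $X\times X'$, and its pullback along $\mathbf{r}_X\times\mathbf{r}_{X'}$ gives a Fourier–Mukai kernel between untwisted abelian surfaces. Conjugating by $\exp(B)$, $\exp(B')$, whose matrices are unipotent lower triangular by \Cref{example:B-fields} and hence admissible, together with pullback by $\mathbf{r}$, which acts by scalars $r^i$ on $\rH^i$, identifies $\Psi^{\widetilde{\rH}}$ with a rational element of the same spin group. Admissibility is then inherited.

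\textbf{Step 2: orientation.} Orientation-preservation of the positive $4$-space is automatic for elements of the identity component. We would check it on generators: by \Cref{prop:twisted-MP}, $\Aut(\Db(X,\alpha))$ is generated modulo the cohomologically trivial part by standard functors. These are shifts acting by $-1$ on the even part (but $(-1)^4$ on orientation), tensor by line bundles, isomorphisms, and twisted Poincaré-type transforms, and each was computed in the examples above.

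Alternatively, argue by deformation. Admissibility is locally constant in families, and twisted derived equivalences deform along with $(X,\alpha)$ in families where the kernel remains defined. One can then reach a case where $\Psi$ is a composition of standard functors.

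\textbf{Step 3: anti-equivalences.} Write $\Psi=\Psi'\circ\DD$ with $\DD\colon\Db(X,\alpha)^{\rm op}\to\Db(X,-\alpha)$ the derived dual and $\Psi'$ an equivalence. On the Mukai lattice, $\DD$ acts by $v\mapsto v^\vee$, i.e. by $(-1)^i$ on $\rH^{2i}$ in the untwisted normalization, combined with $B\mapsto -B$. This is $+1$ on $\rH^0,\rH^4$ and $-1$ on $\rH^2$. It preserves the volume form on $\rH^2$ (since $(-1)^4=1$) and the hyperbolic orientation. It reverses the orientation of positive $4$-spaces, because it is $-1$ on the positive $3$-space in $\rH^2$ and $+1$ on the positive line of the hyperbolic plane. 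So $\DD$ is admissible and orientation-reversing, and Step 1 then gives the claim for $\Psi$.

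\textbf{Main obstacle.} The main obstacle is Step 1 in the twisted setting: verifying carefully that the descent through $\mathbf{r}_X$ and the $\mathbf{B}$-field conjugation really place $\Psi^{\widetilde{\rH}}$ inside the spinor image. Equivalently, one must show the determinant on $\bigwedge^{\even}V_\QQ$ is $+1$ rather than $-1$. We would first try to compute the determinant directly: relate it to $f_\Psi\in\SO$ via the LLT identification of $\rH^1(A_{X,\alpha})$ with the twisted Orlov lattice, and reduce to the untwisted GLO result.
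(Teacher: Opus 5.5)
\textbf{The twisted case of Step~1 is a genuine gap.} Your untwisted argument is fine: GLO places the action in the spinor image, and a semisimple group has trivial determinant on $\bigwedge^{\even}V_\QQ$. Your treatment of anti-equivalences via $\DD$ is also fine: $\DD$ has determinant $(-1)^6=1$ on $\rH^{\even}$, and it acts by $-1$ on the positive $3$-space in $\rH^2$ and by $+1$ on the positive line of the hyperbolic plane. Both agree with the paper.

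The failing step is the one you flag. The untwisted kernel you obtain on the covers (pulling back along $\mathbf{r}_X\times\mathbf{r}_{X'}$ and trivialising the pulled-back Brauer classes) defines, by flat base change and the projection formula, essentially the functor $\mathbf{r}_{X'}^*\circ\Psi\circ\mathbf{r}_{X*}$. For $r>1$ this is not an equivalence: skyscraper sheaves at distinct points of one fibre of $\mathbf{r}_X$ are sent to isomorphic objects. Since \cite[Proposition~4.3.7]{GLO} is a statement about equivalences, it says nothing about the cohomological action of this kernel.

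The conjugations by $\exp(B)$, $\exp(B')$ and by the graded scalings $r^{i}$ on $\rH^{i}$ are harmless; the scalings are, up to a scalar, the spinor action of $r\cdot\id_V$. But these conjugations only transport the question. That $\Psi^{\widetilde{\rH}}$ lies in the spinor image is exactly what remains unproved.

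Your orientation step does not rescue this. \Cref{prop:twisted-MP} gives no generation of $\rU(A_{X,\alpha})$ by shifts, line bundle twists, isomorphisms and Poincar\'e-type transforms, and the theorem concerns equivalences between different twisted surfaces in any case. The examples compute admissibility, not orientation. The deformation to ``a composition of standard functors'' is not substantiated. Once spin membership is known, orientation follows from connectedness of the real spin group, as in the paper's untwisted step; so orientation really hinges on the same missing point.

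\textbf{How the paper closes the gap.} It avoids descent through $\mathbf{r}_X$ and deforms to the untwisted case instead.
\begin{enumerate}
\item Up to shift, the kernel is a universal twisted sheaf, so $X'\cong M_H(X,\alpha,\mathbf{v})$.
\item Choose a connected curve $T$ and a family $(\mathcal{X},\mathcal{H},\mathcal{B})$ through $(X,H,B_X)$ containing a point $t_0$ where $\mathcal{B}_{t_0}$ is of type $(1,1)$, so the twist disappears there.
\item The relative moduli space with its relative universal twisted sheaf gives a family of equivalences; one also deforms the $X'$ side if the moduli space is not fine.
\item GLO applies at $t_0$. The matrix of the action in relative admissible bases is locally constant, which transports admissibility. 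The spinor norm is a locally constant $\{\pm1\}$-valued function on $T$, which transports orientation.
\end{enumerate}

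If you prefer to keep your spin-theoretic route, the missing ingredient is a twisted form of GLO's intertwining property. You would need to show that $\Psi^{\rH}$ intertwines the Clifford actions of $\widehat{\rH}(X,\alpha,\QQ)$ and $\widehat{\rH}(X',\alpha',\QQ)$ through $f_\Psi$. This would have to come from the twisted analogue of Orlov's defining relation between $\Psi$ and the functors $\Phi_\gamma$, together with the identification of $\rH^1(A_{X,\alpha})$ with the twisted Orlov lattice.
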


\begin{proof}
We may first assume $\Psi$ is a derived equivalence; the anti-equivalence case follows by composing with the duality functor.
\setcounter{proofstep}{0}

\proofstep[] By \Cref{example:B-fields}, the admissibility of $\Psi^{\widetilde{\rH}}$ does not depend on the choice of \textbf{B}-fields on both sides. Thus, without further specifying the \textbf{B}-field lifts, we may keep the notation $\widetilde{\rH}(X,\alpha,\ZZ)$ and $\widetilde{\rH}(X',\alpha',\ZZ)$.

\proofstep[] Up to a shift, the Fourier--Mukai kernel of $\Psi$ is a twisted sheaf on $X\times X'$ with respect to the Brauer class $\alpha^{-1}\boxtimes\alpha'$. For untwisted varieties this is a classical result of Orlov \cite[Proposition 3.2]{Orlov}. The twisted case is proved in \cite[Lemma 6.2.4]{Li2025}. Consequently, there exists an ample line bundle $H$ on $X$ and a primitive Mukai vector $\mathbf{v}= (\Psi^{\widetilde{\rH}})^{-1}(0,0,1)\in \widetilde{\rH}(X,\alpha,\ZZ)$ such that
\[
X' \cong M_H(X,\alpha,\mathbf{v}),\qquad \alpha' = -\theta,
\]
where $\theta\in\Br(Y)$ is the Brauer class induced by the moduli space structure (via the universal family), and $\Psi$ is induced (up to shift) by the universal $\alpha^{-1}\boxtimes \alpha'$-twisted sheaf on $X\times X'$.

\proofstep[Untwisted case]
Assume $\alpha=0$ and $\alpha'=0$. This is essentially proved in \cite{GLO} (see also \cite[Corollary 9.61]{HuybrechtsBook}).  Golyshev--Lunts--Orlov constructed a commutative diagram
\[\begin{tikzcd}[column sep=small]
	& {\operatorname{Im}(\rho_{X,X'})} & {\mathrm{Iso}(\rH^\ast(X,\QQ),\rH^{\ast}(X',\QQ))} \\
	{\operatorname{Eq}\bigl(\Db(X),\Db(X')\bigr)} \\
	& {\mathrm{U}(X\times \widehat{X}, X'\times \widehat{X'})}
	\arrow[phantom, "\subset", from=1-2, to=1-3]
	\arrow["{\lambda_{X,X'}}"', from=1-2, to=3-2]
	\arrow["{\rho_{X,X'}}", from=2-1, to=1-2]
	\arrow["{\pi_{X,X'}}"', from=2-1, to=3-2]
\end{tikzcd}\]
where
\[
\begin{aligned}
\pi_{X,X'} &: \operatorname{Eq}(\Db(X),\Db(X')) \longrightarrow \mathrm{U}(X\times \widehat{X}, X'\times \widehat{X'}),\quad \Psi\longmapsto f_\Psi,\\
\rho_{X,X'} &: \operatorname{Eq}(\Db(X),\Db(X')) \longrightarrow \operatorname{Iso}\bigl(\rH^*(X,\mathbb{Q}),\rH^*(X',\mathbb{Q})\bigr),\quad \Psi\longmapsto \Psi^{\rH}.
\end{aligned}
\]
and $\lambda_{X,X'}$ is obtained via the restriction map to the odd part. 

By taking admissible markings, we identify $\rH^1(X,\mathbb{Q}) \cong V_{\mathbb{Q}}$ and $\rH^1(X',\mathbb{Q}) \cong V_{\mathbb{Q}}$, where $V_{\mathbb{Q}}$ is a fixed vector space of dimension $4$. By \cite[Proposition 4.3.7]{GLO}, the image $\operatorname{Im}(\rho_{X,X'})$ is then identified as a subgroup of the spin group $\mathrm{Spin}(V_{\mathbb{Q}}\oplus V_{\mathbb{Q}}^\vee)$, and the map $\lambda_{X,X'}$ is identified as a group homomorphism factoring through
    \[
    \mathrm{SO}(V_{\mathbb{Q}}\oplus V_{\mathbb{Q}}^\vee) \xleftarrow{\text{double cover}} \mathrm{Spin}(V_{\mathbb{Q}}\oplus V_{\mathbb{Q}}^\vee).
    \]
Consequently, the induced isometry $\Psi^{\rH^\ast}$ on the whole cohomology, as well as its restriction to $\rH^{\mathrm{odd}}$, are both admissible (their determinant is $1$ after fixing an admissible marking). In particular, $\Psi^{\widetilde{\rH}}=\Psi^{\rH^{\mathrm{even}}}$ is admissible as well. 

For the orientation, by Orlov's description of equivalences of derived categories of abelian varieties, the induced action on cohomology is obtained from the spin representation of the corresponding symplectic isomorphism. Hence the cohomological transform lies in $\SO^+$ and preserves the canonical orientation of the positive four-space.
This completes the proof for the untwisted case.

\proofstep[General case]
Now let $(X,\alpha)$ and $(X',\alpha')$ be arbitrary twisted abelian surfaces.
One could follow the approach of \cite{GLO} to compute the induced isometry directly using the equivalences constructed in \cite{LLT25}, but this would involve a complicated treatment of the functors induced by inflation. Instead, we give a much simpler proof by deforming the equivalence $\Psi$ to the untwisted case.

Choose a \textbf{B}-field lift $B_X\in\rH^2(X,\QQ)$ of $\alpha$. There exists a connected smooth curve $T$ and a family of polarized abelian surfaces 
\[
\pi:(\mathcal{X},\mathcal{H})\to T
\]
with a distinguished point $t_1$ such that $(\mathcal{X}_{t_1},\mathcal{H}_{t_1})=(X,H)$, together with a family of \textbf{B}-fields $\mathcal{B}-\in \bR^2\pi_*\QQ$ and a relative primitive Mukai vector $\mathbf{v}$. Moreover, there is a point $t_0\in T$ with the fiber $(\mathcal{X}_{t_0},[\mathcal{B}_{t_0}])$ untwisted, that is, $\mathcal{B}_{t_0}\in\rH^{1,1}(\mathcal{X}_{t_0},\QQ)$.

Consider the relative moduli space $\mathcal{M}\to T$ of $\mathcal{H}$-stable twisted sheaves, whose fiber over $t\in T$ is $\mathcal{M}_{\mathcal{H}_t}(\mathcal{X}_t,[\mathcal{B}_t],\mathbf{v}_t)$. The relative universal twisted sheaf induces a derived equivalence between $(\mathcal{X}_t,[\mathcal{B}_t])$ and $(\mathcal{M}_t,\beta_t)$ for some Brauer class $\beta_t$.
\begin{itemize}
    \item If $\mathcal{M}_{t_1}$ is a fine moduli space, then at $t_0$ we obtain a derived equivalence $\Psi_{t_0}$ between two untwisted abelian surfaces $\mathcal{X}_{t_0}$ and $\mathcal{M}_{t_0}$. By Step 2, $\Psi_{t_0}^{\widetilde{\rH}}$ is admissible. Since admissibility is deformation invariant,  $\Psi_{t_1}^{\widetilde{\rH}}$ is also admissible.
    \item If $\mathcal{M}_{t_1}$ is not fine, we apply an additional deformation to the $X' =\mathcal{M}_{t_1}$ side to specialize it to an untwisted abelian surface. Repeating the same argument then yields the admissibility.
\end{itemize}
The orientation argument follows because the relative universal twisted sheaf induces an isometry of the corresponding Mukai local system $\bR^0\pi_*\ZZ \oplus \bR^2\pi_*\ZZ \oplus \bR^4\pi_*\ZZ$ over $T$. Hence the associated spinor norm, viewed as a function on $T$, is locally constant; since $T$ is connected and the values lie in $\{\pm1\}$, it is constant.
\end{proof}

\begin{remark}
One may ask whether the diagram above admits an integral refinement. More precisely, after choosing compatible $\mathbf{B}$-field twists, one can ask whether the cohomological Fourier--Mukai transform identifies the corresponding integral structures, namely the twisted Mukai lattice and the twisted Orlov lattice, inside the cohomology ring.
\end{remark}

\subsection{A derived Torelli theorem via the twisted Mukai lattice}

The previous discussion on admissibility and the action of derived equivalences on cohomology naturally leads to a Torelli-type statement in terms of the twisted Mukai lattice. An analogous result for K3 surfaces and twisted K3 surfaces was obtained in \cite{HS05,Reinecke19}.  The overall strategy parallels the K3 case, yet certain subtleties demand much closer attention. 

\begin{theorem}[\Cref{thm:torelli-C}, derived Torelli theorem] \label{thm:derived-torelli}
Let $(X,\alpha)$ and $(X',\alpha')$ be twisted complex abelian surfaces.  There exists an admissible orientation-preserving (resp. orientation-reversing) Hodge isometry
\[
h: \widetilde{\rH}(X,\alpha,\mathbb{Z}) \xrightarrow{\sim} \widetilde{\rH}(X',\alpha',\mathbb{Z})
\]
if and only if there exists a derived equivalence (resp. anti-equivalence)
\[
\Psi: \Db(X,\alpha) \xrightarrow{\sim} \Db(X',\alpha') \quad (\text{resp. }\Psi \colon \Db(X,\alpha) \xrightarrow{\sim} \Db(X',\alpha')^{\mathrm op})
\]
such that $\Psi$ induces $h$ on the twisted Mukai lattices.
\end{theorem}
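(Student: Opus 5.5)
The ``if'' direction is exactly \Cref{thm:admissibility}: any derived (anti-)equivalence induces an admissible Hodge isometry, orientation-preserving for equivalences and orientation-reversing for anti-equivalences. So the content is the converse. I would follow the strategy of Huybrechts--Stellari and Reinecke for twisted K3 surfaces: realize $X'$ as a moduli space, and then correct the resulting isometry by a group of isometries known to lift.

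\textbf{Step 1: produce an equivalence $\Psi_0$.} Let $h$ be an admissible Hodge isometry and set $\mathbf{v}:=h^{-1}(0,0,1)$. This is a primitive isotropic algebraic class in $\widetilde{\rH}(X,\alpha,\ZZ)$. After composing with autoequivalences that realize elementary moves (shift, tensoring by line bundles $\exp(\ell)$, and the twisted Fourier--Mukai transform of Poincar\'e type), I would arrange that $\mathbf{v}$ has positive rank. Then $M:=M_H(X,\alpha,\mathbf{v})$ is a (twisted) fine moduli space of stable twisted sheaves for generic $H$. By Yoshioka and the twisted analogue in \cite{Li2025}, $M$ is an abelian surface with a Brauer class $\beta$, and the universal sheaf gives an equivalence $\Psi_0\colon \Db(X,\alpha)\to\Db(M,\beta)$ with $\Psi_0^{\widetilde{\rH}}(\mathbf{v})=(0,0,1)$. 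The composite $g:=h\circ(\Psi_0^{\widetilde{\rH}})^{-1}$ is then an admissible Hodge isometry $\widetilde{\rH}(M,\beta,\ZZ)\to\widetilde{\rH}(X',\alpha',\ZZ)$ sending $(0,0,1)$ to $(0,0,1)$.

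\textbf{Step 2: reduce $g$ to an isometry of $\rH^2$.} Since $g$ preserves $(0,0,1)$, it preserves $(0,0,1)^\perp/(0,0,1)$. Choosing the \textbf{B}-fields appropriately, $g$ becomes, up to an $\exp(\ell)$ factor with $\ell\in\rH^2$, a direct sum of a Hodge isometry $\varphi\colon \rH^2(M,\ZZ)\to\rH^2(X',\ZZ)$ and $\pm\id$ on the hyperbolic plane $\rH^0\oplus\rH^4$. I would check that the twisting by $\exp(\ell)$ matches the Brauer classes, so that $\ell$ is realized by tensoring with a twisted line bundle; this forces $[\beta]$ to be carried to $[\alpha']$.

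\textbf{Step 3: lift $\varphi$ geometrically.} Admissibility of $g$ means that $\varphi$ is admissible in Shioda's sense exactly when the hyperbolic sign is $+1$. In that case, by Shioda's Torelli theorem, $\varphi$ or $-\varphi$ equals $f^*$ for an isomorphism $f\colon X'\to M$. The correction by $-1$ is realized by $[1]$, or by composing with $\id_{\rH^2}\oplus(-\id)$, i.e.\ the sign issue is absorbed by the shift. When the sign is $-1$, admissibility forces $\varphi$ to be non-admissible, and I would pass to the dual $\widehat{M}$ via the Poincar\'e example above, which swaps these two situations.

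\textbf{Step 4: orientation and main obstacle.} The orientation is decided at the end. If $h$ reverses orientation, compose with the duality $\DD$, whose action is orientation-reversing by \Cref{thm:admissibility}, reducing to the preserving case for an anti-equivalence. The main obstacle is the sign bookkeeping in Steps 3--4. The four conditions (admissible or not on $\rH^2$, the sign on the hyperbolic plane, $\pm\id$, and the orientation on the positive four-space) must be shown to be realized by exactly the available functors: shift, duality, the Poincar\'e transform, and isomorphisms. I would handle this with a small table of the group $\{\pm1\}^2$ generated by these functors acting on (admissibility, orientation), checking that it is transitive on the admissible classes.
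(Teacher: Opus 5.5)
\textbf{Verdict.} Your overall route is the paper's. You realize the image of the point class by a moduli space to get an isometry fixing $(0,0,1)$, then fix $(1,0,0)$ as well by a \textbf{B}-field change, then apply Shioda's theorem to the $\rH^2$-part. The paper uses a Bridgeland moduli space $M_\sigma(X',\alpha',\mathbf v)$ for $\mathbf v$-generic $\sigma$, which avoids your positive-rank normalization; that difference is harmless. The gap is in the sign bookkeeping of Step 3, which does not work as written.

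\textbf{The hyperbolic-sign branch.} Once $g(0,0,1)=(0,0,1)$, the equality $\langle g(1,0,0),(0,0,1)\rangle=-1$ forces $g(1,0,0)=(1,b,b^2/2)$. After changing the \textbf{B}-field lift on $M$ (not tensoring by a line bundle, since $b$ need not be algebraic), $g=\id_U\oplus\varphi$ with $U=\rH^0\oplus\rH^4$. So the branch with hyperbolic sign $-1$ and the detour through $\widehat M$ never occur. Your criterion there is also misstated. Since $\det(-\id_U)=+1$, admissibility of $g$ is equivalent to $\det\varphi=1$ whatever the sign on $U$. The non-admissible $\rH^2$-part in the Poincar\'e example comes from the swap of $(1,0,0)$ and $(0,0,1)$, a reflection of $U$, not from $-\id_U$.

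\textbf{The sign from Shioda.} Shioda gives $\varphi=\pm f^*$, and both signs have determinant $1$. The sign cannot be absorbed by the shift. $[1]$ acts by $-\id$ on all of $\widetilde{\rH}$, so it turns $\id_U\oplus(-f^*)$ into $(-\id_U)\oplus f^*$. The remaining correction $(-\id_U)\oplus\id_{\rH^2}$ is induced only by $\DD$ composed with a shift, which is an anti-equivalence landing in $\Db(M,-\beta)$. In fact $\id_U\oplus(-f^*)$ reverses orientation, so by \Cref{thm:admissibility} no equivalence induces it.

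\textbf{The missing idea.} Orientation must be used inside Step 3, not deferred to the end. With $U$ fixed pointwise, $g$ preserves the orientation of positive $4$-planes in $\widetilde{\rH}_{\RR}$ if and only if $\varphi$ preserves the orientation of positive $3$-planes in $\rH^2_{\RR}$. Now $f^*$ preserves the orientation of the $3$-plane spanned by $\mathrm{Re}\,\sigma$, $\mathrm{Im}\,\sigma$ and an ample class: it rotates the $\sigma$-plane and sends ample classes to ample classes. $-f^*$ reverses it. Hence if $h$ is orientation-preserving, then so is $g$, because $\Psi_0^{\widetilde{\rH}}$ is orientation-preserving by \Cref{thm:admissibility}. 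Therefore $\varphi=f^*$ exactly, with no sign to absorb. This is how the paper concludes $\eta_2=(f^*)^{-1}$, and the only leftover sign there is a global one, which a shift does absorb.

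The orientation-reversing case is then handled by $\DD$, as you say. Your proposed table of $\{\pm1\}^2$ acting transitively on classes does not replace this argument. You must realize the given $h$ itself, not merely some isometry with the same admissibility and orientation type.
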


\begin{proof}

It suffices to deal with the equivalence case. The anti-equivalence case follows by composing with the derived duality functor, which identifies $\Db(X',\alpha')^{\mathrm op}$ with $\Db(X',-\alpha')$.

\textit{Necessity.} If $\Psi$ is a derived equivalence, then it induces a symplectic isomorphism in $\rU(A_{X,\alpha}, A_{X',\alpha'})$.  Its action on odd cohomology is admissible.  Moreover, its action $\Psi^{H^\ast}\in \mathrm{Im}(\rho)$ is also admissible. It follows that $\Psi^{\widetilde{\rH}}$ is admissible as well. 

\textit{Sufficiency.} Suppose we are given an admissible orientation-preserving Hodge isometry
\[
h: \widetilde{\rH}(X,\alpha,\mathbb{Z}) \xrightarrow{\sim} \widetilde{\rH}(X',\alpha',\mathbb{Z}).
\]
Choose \textbf{B}-field lifts $B_X\in \rH^2(X,\mathbb{Q})$ and $B_{X'}\in \rH^2(X',\mathbb{Q})$ for the Brauer classes $\alpha$ and $\alpha'$, so that
\[
\widetilde{\rH}(X,\alpha,\mathbb{Z}) = \exp(B_X)\,\widetilde{\rH}(X,\mathbb{Z}),\qquad
\widetilde{\rH}(X',\alpha',\mathbb{Z}) = \exp(B_{X'})\,\widetilde{\rH}(X',\mathbb{Z}).
\]
Define the Mukai vector
\[
\mathbf{v} := \exp(-B_{X'})\Bigl(h\bigl(\exp(B_X)(0,0,1)\bigr)\Bigr)\;\in\; \widetilde{\rH}(X',\mathbb{Z}).
\]
Because $h$ is an isometry of the twisted lattices and $\exp(B_X)(0,0,1)$ is a primitive isotropic vector in $\widetilde{\rH}(X,\alpha,\mathbb{Z})$, the vector $\mathbf{v}$ is primitive and isotropic as well.

Choose a $\mathbf{v}$-generic stability condition $\sigma$, and let $M = M_{\sigma}(X',\alpha',\mathbf{v})$ denote the moduli space of $\sigma$-stable objects on $(X',\alpha')$ with Mukai vector $\mathbf{v}$.  It is easy to see that $M$ is an abelian surface.  Let $(\sM,\beta_M)\to M$ be the $\mathbb{G}_m$-gerbe naturally associated to the universal family. The universal object induces a Fourier--Mukai equivalence
\[
\Psi_{\mathcal{E}} : \Db(M,\beta_M) \xrightarrow{\sim} \Db(X',\alpha')
\]
whose kernel is a $(1,1)$-twisted perfect complex.  This transform gives a Hodge isometry
\[
g: \widetilde{\rH}(M,\beta_M,\mathbb{Z}) \xrightarrow{\sim} \widetilde{\rH}(X',\alpha',\mathbb{Z}).
\]

Set
\[
\eta \coloneqq g^{-1}\circ h : \widetilde{\rH}(X,\alpha,\mathbb{Z}) \longrightarrow \widetilde{\rH}(M,\beta_M,\mathbb{Z}).
\]
By construction $\eta$ is a Hodge isometry and $\eta(0,0,1) = (0,0,1)$ (after adjusting B-field lifts appropriately).  Consequently, the image of $(1,0,0)$ must be of the form
\[
\eta(1,0,0) = \bigl(1,\; b,\; \tfrac{b^2}{2}\bigr)
\]
for some $b\in \rH^2(M,\mathbb{Q})$.  Replace the B-field lift on $M$ by $B_M' := B_M - b$.  After this replacement, $\eta$ additionally fixes $(1,0,0)$ and therefore preserves the hyperbolic sublattice generated by $(1,0,0)$ and $(0,0,1)$.  Hence $\eta$ restricts to a Hodge isometry on the orthogonal complement:
\[
\eta_2 \coloneqq \eta\big|_{\rH^2(X,\mathbb{Z})} : \rH^2(X,\mathbb{Z}) \xrightarrow{\sim} \rH^2(M,\mathbb{Z}),
\]
using the identification $(0,0,1)^\perp/(0,0,1) \cong \rH^2$.

Because both $g$ and $h$ are orientation-preserving, $\eta_2$ is also orientation-preserving.  Moreover, in terms of admissible bases, $\eta_2$ is admissible.  By Shioda's Torelli theorem for abelian surfaces \cite{Shioda}, there exists an isomorphism
\[
f \colon X \xlongrightarrow{\sim} M
\]
such that $\eta_2 = (f^*)^{-1}$.  The isomorphism $f$ lifts to an equivalence of twisted derived categories
\[
\bL f^* \colon \Db(M,\beta_M) \xrightarrow{\sim} \Db(X,\alpha)
\]
after identifying the gerbes via the B-field shift; indeed the Hodge isometry $\eta$ guarantees that $f^*\beta_M$ is equivalent to $\alpha$ as Brauer classes.

Finally, the composition
\[
\Psi := \Psi_{\mathcal{E}} \circ (\bL f^*)^{-1} : \Db(X,\alpha) \longrightarrow \Db(X',\alpha')
\]
is a derived equivalence.  By construction its induced isometry on twisted Mukai lattices coincides with $h$ (up to a possible sign, which can be absorbed by a shift).  This completes the proof.
\end{proof}

\section{Descent of autoequivalences to twisted Kummer surfaces} \label{sec:kummer-descent}

In this section, we develop a descent theory for autoequivalences from twisted abelian surfaces to the associated twisted Kummer surfaces. This is the bridge that allows us to reduce the Bloch-type statement for twisted abelian surfaces to the corresponding statement for twisted K3 surfaces.

\subsection{Twisted Kummer surfaces}\label{subsection:kummer}

Let \(X\) be an abelian surface and let \(\iota\colon X\to X\) denote the \((-1)\)-involution.  
The quotient surface \(X/\langle \iota \rangle\) has \(16\) ordinary double points, corresponding to the fixed points \(X[2]\). Let
\[
p \colon \operatorname{Km}(X) \longrightarrow X/\langle\iota\rangle
\]
be the minimal (crepant) resolution. The resulting smooth surface \(\km(X)\) is the Kummer surface associated with \(X\). Alternatively, one can first blow up \(X\) at the \(16\) two-torsion points to obtain \(\widetilde{X}\), and then take the quotient by the lifted involution of $\iota$ on $\widetilde{X}$. The following commutative diagram summarises the two constructions:
\[
\begin{tikzcd}
& \widetilde{X} \ar[dl,"\pi"] \ar[dr,"\kappa"'] & \\
X \ar[dr,"q"] & & \km(X) \ar[dl,"p"'] \\
& X/\langle\iota\rangle &
\end{tikzcd}
\]
Here \(\widetilde{X}\) is the blow-up of \(X\) along \(X[2]\), \(\pi\colon\widetilde{X}\to X\) is the blow-up, and \(\kappa \colon \widetilde{X} \to \km(X)\) is the quotient by the lifted involution.

Let \(E \subset \widetilde{X}\) be the exceptional locus of \(\pi\). By Grothendieck's purity theorem (see e.g. \cite[Theorem 1.1]{Ces19}), the restriction maps induce isomorphisms of Brauer groups:
\[
\Br(X) \simeq \Br(X\setminus X[2]) = \Br(\widetilde{X}\setminus E) \simeq \Br(\widetilde{X}).
\] 
Moreover, Skorobogatov and Zarhin \cite[Proposition 2.7]{SZ17} proved that \(\kappa^* : \Br(\km(X)) \to \Br(\widetilde{X})\) is an isomorphism. Consequently, we obtain

\begin{lemma} \label{lem:brauer-iso}
There is a canonical isomorphism
\[
\Br(X) \xrightarrow{\sim} \Br(\km(X))
\]
such that, for any Brauer class \(\alpha \in \Br(X)\), the corresponding class \(\bar{\alpha} \in \Br(\km(X))\) satisfies
\[
\pi^*\alpha = \kappa^*\bar{\alpha} \quad \text{in } \Br(\widetilde{X}).
\] 
\end{lemma}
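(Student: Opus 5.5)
The plan is to compose the two isomorphisms quoted just before the statement and then check that the composite has the stated compatibility. First, by purity, restriction gives isomorphisms
\[
\Br(X)\xrightarrow{\sim}\Br(X\setminus X[2])=\Br(\widetilde{X}\setminus E)\xleftarrow{\sim}\Br(\widetilde{X}).
\]
Both $X[2]$ in $X$ and $E$ in $\widetilde{X}$ are removed from smooth surfaces, and $\pi$ restricts to an isomorphism $\widetilde{X}\setminus E\cong X\setminus X[2]$. The resulting map $\Br(X)\to\Br(\widetilde{X})$ is exactly $\pi^*$. To see this, note that pullback commutes with restriction to open subsets: $(\pi^*\alpha)|_{\widetilde{X}\setminus E}=(\pi|_{\widetilde{X}\setminus E})^*(\alpha|_{X\setminus X[2]})$. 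Injectivity of restriction on $\Br(\widetilde{X})$ (purity for a regular integral scheme) then identifies $\pi^*\alpha$ with the element corresponding to $\alpha$. Hence $\pi^*\colon\Br(X)\to\Br(\widetilde{X})$ is an isomorphism.

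Second, Skorobogatov--Zarhin \cite[Proposition 2.7]{SZ17} give that $\kappa^*\colon\Br(\km(X))\to\Br(\widetilde{X})$ is an isomorphism. We then define the canonical map
\[
\Br(X)\to\Br(\km(X)),\qquad \alpha\mapsto\bar\alpha:=(\kappa^*)^{-1}(\pi^*\alpha).
\]
This is a composite of isomorphisms, so it is an isomorphism, and the identity $\pi^*\alpha=\kappa^*\bar\alpha$ holds by construction. It is canonical because it involves no choices: only the fixed maps $\pi$ and $\kappa$, which are determined by $X$ and $\iota$, are used.

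The only point needing care is identifying the purity isomorphism with $\pi^*$, rather than with an abstract chain of restrictions. This reduces to the functoriality of étale cohomology of $\GG_m$ under restriction to opens, together with the injectivity $\Br(\widetilde{X})\hookrightarrow\Br(\widetilde{X}\setminus E)$. As an optional check, one could verify compatibility with the $\iota$-invariance of Brauer classes: $\iota^*$ acts as $(-1)^2=1$ on $\rH^2$, so every class on $\widetilde{X}$ is invariant, which is consistent with the classes descending along $\kappa$. This check is not needed, since the result of \cite{SZ17} is quoted.
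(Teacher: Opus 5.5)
Your proposal is correct and follows the same route as the paper: purity identifies $\Br(X)\cong\Br(\widetilde{X})$ via $\pi^*$, and the Skorobogatov--Zarhin isomorphism $\kappa^*$ then gives $\bar\alpha=(\kappa^*)^{-1}(\pi^*\alpha)$; your explicit check that the restriction chain is $\pi^*$ fills in a detail the paper leaves implicit. One minor precision, which also applies to the paper's wording: Grothendieck purity only covers closed subsets of codimension $\ge 2$, so it does not by itself give $\Br(\widetilde{X})\cong\Br(\widetilde{X}\setminus E)$ for the divisor $E$, but your argument already supplies this, since the restriction $\Br(\widetilde{X})\to\Br(\widetilde{X}\setminus E)$ is injective and the composite $\Br(X)\to\Br(\widetilde{X})\to\Br(\widetilde{X}\setminus E)$ is an isomorphism, which forces both maps to be bijective.
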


We denote the twisted Kummer surface associated to \((X,\alpha)\) by \((\km(X), \bar{\alpha})\) or simply \(\km(X,\alpha)\).

\subsection{The twisted BKR equivalence for Kummer surfaces}
Let \((X,\alpha)\) be a twisted abelian surface. Set \(I = \mathbb{Z}/2\mathbb{Z} = \{1,g\}\). We define a {left \(I\)-action} \((\rho,\sigma)\) on the bounded derived category \(\mathrm{D}^{\mathrm{b}}(X,\alpha)\) as follows:
\begin{itemize}
    \item Since $\iota^*$ acts as identity on the Brauer group, choose an identification \(\iota^*\alpha \simeq \alpha\).
    \item  
    \(\rho_g \coloneqq \mathbf{L}\iota^\ast : \mathrm{D}^{\mathrm{b}}(X,\alpha) \to \mathrm{D}^{\mathrm{b}}(X,\alpha)\) and $\rho_1=\id$.  
    \item We fix a natural isomorphism \(\sigma_{g,g} : \rho_g \circ \rho_g \to \rho_1\) and set \(\sigma_{1,g} = \sigma_{g,1} = \operatorname{id}\).  
    These data satisfy the cocycle condition  
    \[
    \sigma_{g,g} \circ (\rho_g(\sigma_{g,g})) = \sigma_{g^2,g} \circ \sigma_{g,g^2}.
    \]
\end{itemize}
We thus obtain an equivariant category $\mathrm{D}^{\mathrm{b}}(X,\alpha)_{I,\rho,\sigma}$ as in \Cref{subsection:equivariant-category}. For simplicity, we set  
\[
\mathrm{D}^{\mathrm{b}}(X,\alpha)_I \coloneqq \mathrm{D}^{\mathrm{b}}(X,\alpha)_{I,\rho,\sigma}.
\]  
This category is equipped with the forgetful functor  
\[
\mathtt{For} : \mathrm{D}^{\mathrm{b}}(X,\alpha)_I \longrightarrow \mathrm{D}^{\mathrm{b}}(X,\alpha).
\]  

Now consider the universal \(I\)-cluster  
\[
Z := \{(y,x) \in \operatorname{Km}(X) \times X \mid x \text{ lies in the }I\text{-cluster corresponding to }y\}.
\]  
The space \(Z\) is naturally isomorphic to \(\widetilde{X}\) via the map  
\[
(\kappa,\pi) : \widetilde{X} \longrightarrow \operatorname{Km}(X) \times X.
\]  
By construction, $\Db(\widetilde{X},\pi^\ast\alpha)$ admits a natural $I$-action, with respect to which $\bL\pi^*$ is equivariant. We write $\bL\pi_I^* \dashv \bR\pi_{I*}$ for the induced adjoint pair on equivariant derived categories. We have the following twisted version of the BKR equivalence.

\begin{theorem}[Twisted BKR equivalence]\label{thm:twisted-bkr}
 There exists a Fourier--Mukai equivalence
\[
\Xi: \Db\bigl(\km(X),\bar\alpha\bigr) \xrightarrow{\sim} \Db(X,\alpha)_I.
\]
 fitting into the commutative diagram
\begin{equation}\label{eq:kummer-bkr}
\begin{tikzcd}[sep=large]
   \Db(\km(X),\bar\alpha) \ar[r,"\Xi"] \ar[dr,"\bL\kappa^*"] &
   \Db(X,\alpha)_I \ar[r,"\mathtt{For}"] &
   \Db(X,\alpha)  \\
 & \Db(\widetilde{X},\pi^\ast\alpha)_I  \ar[r,"\mathtt{For}"] \ar[u,"\bR\pi_{I*}"]&
   \Db(\widetilde{X},\pi^\ast\alpha) \ar[u,"\bR\pi_*"].
\end{tikzcd}
\end{equation}
\end{theorem}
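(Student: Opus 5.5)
We define $\Xi$ as the composition $\bR\pi_{I*}\circ\bL\kappa^*$, so that the triangle in \eqref{eq:kummer-bkr} commutes by construction. Let $\mathcal{O}_Z$ be the structure sheaf of the universal cluster $Z\simeq\widetilde{X}\subset \km(X)\times X$, with its natural $I$-linearization, viewed as a twisted sheaf for $\bar\alpha^{-1}\boxtimes\alpha$. This makes sense because $\kappa^*\bar\alpha=\pi^*\alpha$ on $\widetilde{X}$ by \Cref{lem:brauer-iso}, so the restriction of $\bar\alpha^{-1}\boxtimes\alpha$ to $Z$ is trivial. The functor $\Xi$ is then the Fourier--Mukai transform with kernel $\mathcal{O}_Z$, exactly as in the untwisted Bridgeland--King--Reid construction.

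The right-hand square commutes because $\bR\pi_*$ commutes with forgetting the linearization: $\pi$ is $I$-equivariant and the induced functor on equivariant categories is computed by the underlying pushforward. With this, the whole diagram commutes, and it remains to show that $\Xi$ is an equivalence.

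For full faithfulness we follow the BKR/Bridgeland criterion for Fourier--Mukai functors with twisted kernels (Canonaco--Stellari give the twisted setting). The skyscraper sheaves $k(y)$, $y\in\km(X)$, form a spanning class. Twisted skyscrapers are ordinary skyscrapers once a local trivialization of the gerbe is chosen at $y$. Their images are the cluster sheaves $\mathcal{O}_{Z_y}$ with their $I$-linearizations. We need
\[
\Hom^i_{I}(\mathcal{O}_{Z_{y_1}},\mathcal{O}_{Z_{y_2}})=0 \quad\text{for } y_1\ne y_2 ,
\]
together with the standard computation $\Hom^i_I(\mathcal{O}_{Z_y},\mathcal{O}_{Z_y})=\Lambda^i T_y$ for $i=0,1$. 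Both statements are local on $X$ around the cluster. Over a $\iota$-invariant étale neighbourhood $U$ of $Z_y$, the Brauer class $\alpha$ becomes trivial. We can choose this trivialization compatibly with the fixed identification $\iota^*\alpha\simeq\alpha$ and with $\sigma_{g,g}$, possibly after modifying $\sigma_{g,g}$ by a $2$-cocycle of $I$ with values in $\GG_m$; since $\rH^2(\ZZ/2,\CC^*)=0$, the cocycle can be normalized. The local Ext computations then reduce verbatim to the untwisted computations of BKR for the $\ZZ/2$-Hilbert scheme of $\CC^2$. Given this, the BKR dimension argument ($\dim Z\times_{\km(X)}Z\le 3$ in the fibre product of $Z$ over $\km(X)$) yields full faithfulness.

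Essential surjectivity follows from the Serre functor criterion. Both categories are indecomposable: $\Db(\km(X),\bar\alpha)$ because $\km(X)$ is connected, and the equivariant category by the BKR argument, since $I$ acts faithfully on connected $X$. The Serre functor of $\Db(X,\alpha)_I$ is $-\otimes\omega_X[2]$ with the induced linearization. Since $\iota$ acts trivially on $\rH^0(\omega_X)$, this is the identity up to $[2]$. This matches $S_{\km(X)}=[2]$, because $\omega_{\km(X)}$ is trivial, so a fully faithful functor between them commuting with Serre functors is an equivalence.

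The main obstacle is the compatibility of twistings. One must make sure that the equivariant structure $(\rho_g,\sigma)$ on $\Db(X,\alpha)$ is chosen so that $\bR\pi_{I*}\bL\kappa^*$ genuinely lands in it. This depends on the choice of $\iota^*\alpha\simeq\alpha$ being the one induced from $\bar\alpha$ via $\kappa^*\bar\alpha=\pi^*\alpha$ on $\widetilde{X}$. I would handle this by working at the gerbe level: pull the $\GG_m$-gerbe of $\bar\alpha$ back along $\kappa$, which gives an $I$-equivariant gerbe on $\widetilde{X}$, and define $\sigma$ from this structure. Purity (\Cref{lem:brauer-iso}) then shows that the gerbe descends along $\pi$ to an $I$-equivariant gerbe representing $\alpha$.
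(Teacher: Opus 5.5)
Your proposal is correct, and its skeleton matches the paper's proof. It uses the same kernel $\cO_Z$ with its natural $I$-linearization, made $(\bar\alpha^{-1}\boxtimes\alpha)$-twisted via $\kappa^*\bar\alpha=\pi^*\alpha$. It uses the same factorization $\Xi=\bR\pi_{I*}\circ\bL\kappa^*$, so the diagram commutes by construction. And it reduces to the untwisted BKR theorem through \'etale-local trivializations of the Brauer class that are compatible with $\iota$.

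Where you differ is in the globalization step. The paper uses BKR as a black box: \'etale locally $\Xi$ is the ordinary BKR equivalence, so the unit and counit of the adjunction are isomorphisms \'etale locally, hence globally. You instead rerun the BKR/Bridgeland machinery for the twisted equivariant category:
\begin{itemize}
\item twisted skyscrapers serve as a spanning class;
\item Ext groups of cluster sheaves are computed in the local model $\CC^2/\pm1$;
\item the Serre-functor criterion together with indecomposability of $\Db(X,\alpha)_I$ gives essential surjectivity.
\end{itemize}
This costs some global checks the paper never needs: indecomposability, $\omega_X\cong\cO_X$ as an $I$-equivariant sheaf, and existence of adjoints for twisted equivariant kernels. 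In exchange, you only localize Ext groups between finite-length objects. So you avoid having to justify that the unit and counit maps are compatible with \'etale base change, which the paper's one-line argument leaves implicit. Your gerbe-level choice of $\iota^*\alpha\simeq\alpha$ (induced from $\kappa^*\bar\alpha$ and extended over $X[2]$ by purity) is also a more explicit version of the paper's appeal to $\rH^2(I,\CC^*)=0$.

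One small slip. The BKR dimension condition concerns $\km(X)\times_{X/\iota}\km(X)$, not $Z\times_{\km(X)}Z$. Moreover, in BKR that bound is the input, via the intersection theorem, that proves $\Hom^i_I(\cO_{Z_{y_1}},\cO_{Z_{y_2}})=0$ for $y_1\neq y_2$; it is not a step applied after the vanishing. Since both fibre products are surfaces here, nothing breaks.
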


\begin{proof}
We follow the strategy of the classical BKR equivalence for Kummer surfaces (see \cite{BKR, BottiniHuybrechts}), adapted to the twisted setting.

By definition, the structure sheaf \(\cO_Z\) carries a natural \(I\)-linearization 
\[
\{\theta_g : g^\ast \cO_Z \to \cO_Z\}.
\]
Here $\theta_g$ denotes the automorphism of the structure sheaf induced by the $I$-action on $Z$. This action comes from the diagonal action of $I$ on \(\km(X)\times X\), namely
\[
g\cdot(y,x)=(y,g\cdot x).  
\]
Moreover, \Cref{lem:brauer-iso} gives \(\kappa^*\bar\alpha = \pi^*\alpha\) in \(\Br(\widetilde{X})\), so the class \(\kappa^*\bar\alpha^{-1}\otimes \pi^*\alpha\) is trivial on \(\widetilde{X}\). Thus we can view $\cO_Z$ as a twisted sheaf in $\Db\bigl(\km(X)\times X,\; \bar\alpha^{-1}\boxtimes \alpha\bigr)$. Since $\rH^2(I,\mathbb{C}^*)=0$, the corresponding trivialization may be chosen to be \(I\)-equivariant. With this choice, together with the above $I$-linearization, \(\cO_Z\) becomes a twisted \(I\)-equivariant Fourier--Mukai kernel 
\[
\mathcal{P} \in \Db\bigl(\km(X)\times X,\; \bar\alpha^{-1}\boxtimes \alpha\bigr)_I.
\]  

Define the functor  
\[
\Xi: \Db(\km(X),\bar\alpha) \longrightarrow \Db(X,\alpha)_I
\]  
by  
\[
\Xi(-) := \bR q_* \bigl(\mathbf{L}p^*(-)\otimes \mathcal{P} \bigr) = \mathbf{R}\pi_{I*} \circ \mathbf{L}\kappa^*(-),
\]  
where $p,q$ are the projections from the product to $\km(X)$ and $X$ respectively, and \(\otimes\) denotes the twisted tensor product in the equivariant sense. Here we identify $Z$ with $\widetilde{X}$ for the second equality.
After forgetting the equivariant structure, we obtain 
\begin{equation} \label{eq:BKR-compose-for}
\Theta_{X,\alpha} \coloneqq \mathtt{For}\circ\Xi = \bR q_* \bigl(\mathbf{L}p^*(-)\otimes \cO_Z \bigr) = \mathbf{R}\pi_* \circ \mathbf{L}\kappa^*.
\end{equation}
This is precisely the Fourier--Mukai transform with kernel \(\cO_Z\). This proves the commutativity of diagram \cref{eq:kummer-bkr}.

We now show that $\Xi$ is a derived equivalence. The original BKR argument (\cf \cite{BKR}) gives a Fourier--Mukai equivalence
\[
\Db(\km(X)) \longrightarrow \Db(X)_I
\]
with kernel $\cO_Z \in \Db(\km(X) \times X)_{1\times I}$, equipped with a canonical $(1 \times I)$-linearization. Since the $(1\times I)$-action here is the same as the $I$-action considered above, this canonical linearization agrees with $\{\theta_g \colon g^*\cO_Z \to \cO_Z\}$. Introducing the twist does not change the underlying linearization data.

Thus for our twisted setting, after using the chosen $I$-equivariant trivialization of the Brauer classes, $\Xi$ agrees \'etale locally with the ordinary BKR equivalence. Consequently, the unit and counit of the adjunction for $\Xi$ are isomorphisms \'etale locally. Since being an isomorphism is a local property, they are isomorphisms globally. It follows that $\Xi$ is an equivalence. 
\end{proof}

\subsection{Adjusting autoequivalences for Kummer descent}

This subsection shows that, after composing with a suitable element of $A_{X,\alpha}$, an autoequivalence becomes compatible with the involution, and hence descends to the Kummer surface. In \Cref{sec:bloch-surface} we will show that this adjustment acts trivially on the Albanese kernel.

Recall from \Cref{prop:twisted-MP} the short exact sequence
\[
0 \longrightarrow \mathbb{Z}\times A_{X,\alpha} \longrightarrow
\Aut\bigl(\Db(X,\alpha)\bigr) \longrightarrow 
\rU(A_{X,\alpha}) \longrightarrow 0.
\]
Under the identification $\iota^*\alpha \simeq \alpha$, $\bL\iota^*$ acts on $\Db(X,\alpha)$ as an involution.

\begin{proposition}\label{prop:commute-involution}
Let $\Psi\colon \Db(X,\alpha) \xrightarrow{\sim} \Db(X',\alpha')$ be a derived equivalence.  Then there exists $\gamma_0 \in A_{X,\alpha}$ such that
\[
\bL\iota_{X'}^* \circ \Psi' = \Psi' \circ \bL\iota_X^*,
\]
where $\Psi' \coloneqq \Psi \circ \Phi_{\gamma_0}$, and $\Phi_{\gamma_0}$ is the autoequivalence corresponding to $\gamma_0 \in A_{X,\alpha}$.
\end{proposition}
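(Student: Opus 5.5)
The plan is to compare $\Psi$ with its conjugate by the involutions. Set
\[
\Psi^\iota \coloneqq \bL\iota_{X'}^*\circ\Psi\circ\bL\iota_X^* ,
\]
which is again an equivalence $\Db(X,\alpha)\to\Db(X',\alpha')$ because $\iota^*\alpha\simeq\alpha$ and $\iota^*\alpha'\simeq\alpha'$. The autoequivalence $\Delta\coloneqq\Psi^{-1}\circ\Psi^\iota$ of $\Db(X,\alpha)$ should lie in the kernel $\ZZ\times A_{X,\alpha}$ of \Cref{prop:twisted-MP}. To see this, I will show that $\bL\iota^*$ induces $-\id$ on $A_{X,\alpha}$ (and on $A_{X',\alpha'}$), which is central in the symplectic group. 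Then the image of $\Delta$ in $\rU(A_{X,\alpha})$ is $f_\Psi^{-1}\circ(-1)\circ f_\Psi\circ(-1)=\id$.

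The claim that $\bL\iota^*$ acts by $-1$ follows from Orlov's construction: $\iota$ acts as $-1$ on $\rH^1(X)$ and on $\rH^1(\widehat X)$. Hence it acts as $-1$ on the twisted Orlov lattice $\widehat{\rH}(X,\alpha,\ZZ)=\rH^1(A_{X,\alpha})$, since $\exp(B)$ commutes with $-1$. Cohomologically, $\bL\iota^*$ acts trivially on even cohomology, so the shift component of $\Delta$ is even. Precomposing or postcomposing by shifts does not change anything in the desired identity, so we may assume $\Delta=\Phi_\delta$ for some $\delta\in A_{X,\alpha}$.

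Next I will compute the conjugation behaviour of the adjusting functors: $\bL\iota^*\circ\Phi_\gamma\circ\bL\iota^*\cong\Phi_{-\gamma}$. This holds because $\iota^*t_{u*}\iota^*=t_{-u*}$ and $\iota^*\sP_\nu\simeq\sP_{-\nu}$, compatibly with the descent data in the equivariant model of \Cref{subsection:equivariant-category}. Now put $\Psi'=\Psi\circ\Phi_{\gamma_0}$. Then
\[
(\Psi')^\iota=\Psi^\iota\circ\Phi_{-\gamma_0}=\Psi\circ\Phi_\delta\circ\Phi_{-\gamma_0},
\]
so the goal $(\Psi')^\iota\cong\Psi'$ becomes $\delta-\gamma_0=\gamma_0$, that is, $\delta=2\gamma_0$. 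This is solvable because $A_{X,\alpha}$ is an abelian variety and hence divisible. I will choose any $\gamma_0$ with $2\gamma_0=\delta$.

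The main obstacle is making the identities rigorous as isomorphisms of functors, and not just as equalities modulo the kernel. Three points need care. First, the composition $\Phi_\gamma\circ\Phi_{\gamma'}\cong\Phi_{\gamma+\gamma'}$ must hold exactly, which is guaranteed by the injective homomorphism $A_{X,\alpha}\hookrightarrow\Aut$. Second, the chosen identification $\iota^*\alpha\simeq\alpha$ could introduce an extra twist. Any ambiguity from it only changes the linearization by a character of $X[r]$, i.e. by an element of $A_{X,\alpha}$, so it is absorbed into $\delta$. Third, the cocycle $a$ must be compatible with the $\iota$-conjugation in the equivariant model. I expect to handle it by noting that $-1$ preserves $\Gamma_\phi$ and $e_\alpha$, since $e_\alpha(-\sigma_1,-\sigma_2)=e_\alpha(\sigma_1,\sigma_2)$.
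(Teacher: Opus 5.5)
Your overall route is the paper's. The commutator $\Delta=\Psi^{-1}\circ\bL\iota_{X'}^*\circ\Psi\circ\bL\iota_X^*$ maps to the identity in $\rU(A_{X,\alpha})$ because $\bL\iota^*$ maps to the central element $-1$, so $\Delta$ lies in the kernel $\ZZ\times A_{X,\alpha}$. The conjugation rule $\bL\iota^*\circ\Phi_\gamma\circ\bL\iota^*\cong\Phi_{-\gamma}$ and the $2$-divisibility of $A_{X,\alpha}$ then finish the argument.

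\textbf{The gap: the shift component.} Write $\Delta=\Phi_\delta\circ[k]$. You show $k$ is even (correct) and then say shifts can be absorbed. They cannot. Shifts commute with every functor involved, so $\Delta$ is unchanged when $\Psi$ is replaced by $\Psi[j]$. More generally, for $\Psi'=\Psi\circ\Phi_{\gamma_0}\circ[j]$ your own computation gives
\[
(\Psi')^{-1}\circ\bL\iota_{X'}^*\circ\Psi'\circ\bL\iota_X^*\;\cong\;\Phi_{\delta-2\gamma_0}\circ[k].
\]
Since $\ZZ\times A_{X,\alpha}\to\Aut(\Db(X,\alpha))$ is injective, this is the identity only when $k=0$ and $\delta=2\gamma_0$. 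So if $k=2m\neq 0$, no adjustment by elements of $A_{X,\alpha}$ or by shifts would work. Knowing that $k$ is even proves nothing toward the statement.

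\textbf{The missing step: showing $k=0$.} The paper gets this from the involution property. On one side, $\Delta\circ\bL\iota_X^*=\Psi^{-1}\circ\bL\iota_{X'}^*\circ\Psi$ squares to the identity, because $(\bL\iota_{X'}^*)^2\cong\id$. On the other side, using $(\bL\iota_X^*)^2\cong\id$ and the conjugation rule, $(\Phi_\delta\circ[k]\circ\bL\iota_X^*)^2\cong\Phi_\delta\circ\Phi_{-\delta}\circ[2k]=[2k]$. Hence $[2k]\cong\id$, so $k=0$. Put this in place of your parity argument; the rest of your proof, choosing $\gamma_0$ with $2\gamma_0=\delta$, then goes through as written.
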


\begin{proof}
Consider the commutator
\[
\Omega \coloneqq \Psi^{-1} \circ \bL\iota_{X'}^* \circ \Psi \circ (\bL\iota_X^*)^{-1} \in \Aut(\Db(X,\alpha)).
\]
Since $\bL\iota_X^*$ and $\bL\iota_{X'}^*$ map to $-\mathrm{id}$ in $\rU(A_{X,\alpha})$ and $\rU(A_{X',\alpha'})$ respectively, their conjugates cancel. Hence $\Omega$ lies in the kernel $\mathbb{Z}\times A_{X,\alpha}$.  Write $\Omega = \Phi_{\gamma} \circ [k]$ with $\gamma \in A_{X,\alpha}$ and $k \in \mathbb{Z}$.

Because $(\bL\iota_X^*)^2 = \mathrm{id}$, we have $(\Omega \circ \bL\iota_X^*)^2 = \mathrm{id}$. On the other hand, a direct computation gives  
\[
\bL\iota_X^* \circ \Phi_\gamma = \Phi_{-\gamma} \circ \bL\iota_X^*.
\]
This forces $k=0$ and thus $\Omega = \Phi_\gamma$.  Consequently,
\begin{equation} \label{eq:proof-descending}
\Psi^{-1} \circ \bL\iota_{X'}^* \circ \Psi = \Phi_\gamma \circ \bL\iota_X^*.
\end{equation}

Since $A_{X,\alpha}$ is $2$-divisible, choose $\gamma_0 \in A_{X,\alpha}$ such that $2\gamma_0 = \gamma$. We claim that $\Psi' \coloneqq \Psi \circ \Phi_{\gamma_0}$ satisfies the desired commutativity. Indeed, using \cref{eq:proof-descending} and the relation $\bL\iota_X^* \circ \Phi_{\gamma_0} = \Phi_{-\gamma_0} \circ \bL\iota_X^*$, we compute
\[
\begin{aligned}
\bL\iota_{X'}^* \circ \Psi' &= \bL\iota_{X'}^* \circ \Psi \circ \Phi_{\gamma_0} \\
&= \Psi \circ \Phi_\gamma \circ \bL\iota_X^* \circ \Phi_{\gamma_0} \\
&= \Psi \circ \Phi_\gamma \circ \Phi_{-\gamma_0} \circ \bL\iota_X^* \\
&= \Psi \circ \Phi_{\gamma-\gamma_0} \circ \bL\iota_X^*.
\end{aligned}
\]
On the other hand,
\[
\Psi' \circ \bL\iota_X^* = \Psi \circ \Phi_{\gamma_0} \circ \bL\iota_X^*.
\]
Because $2\gamma_0 = \gamma$, we have $\gamma - \gamma_0 = \gamma_0$. Hence $\Phi_{\gamma-\gamma_0} = \Phi_{\gamma_0}$, and so
\[
\bL\iota_{X'}^* \circ \Psi' = \Psi \circ \Phi_{\gamma_0} \circ \bL\iota_X^* = \Psi' \circ \bL\iota_X^*,
\]
which completes the proof.
\end{proof}

Combining the twisted BKR equivalence with the adjustment result above, we obtain the following descent statement for derived equivalences on twisted abelian surfaces to twisted Kummer surfaces.

\begin{proposition}\label{prop:descend-equivalence}
Let $\Psi \colon \Db(X,\alpha) \xrightarrow{\sim} \Db(X',\alpha')$ be a derived equivalence.  Then there exists $\gamma \in A_{X,\alpha}$ such that $\Psi' \coloneqq \Psi \circ \Phi_{\gamma}$ descends to an equivalence between the associated twisted Kummer surfaces, i.e. there exists a commutative diagram:
\begin{equation}\label{eq:des-kummer}
    \begin{tikzcd}[sep=large]
    \Db(\km(X),\bar\alpha) \ar[r,"\overline{\Psi}", "\sim"'] \ar[d,"\Theta_{X,\alpha}"'] & 
    \Db(\km(X'),\overline{\alpha'}) \ar[d,"\Theta_{X',\alpha'}"] \\
    \Db(X,\alpha) \ar[r,"\Psi'", "\sim"'] & 
    \Db(X',\alpha'),
\end{tikzcd}
\end{equation}
where $\Theta_{X,\alpha}$ and $\Theta_{X',\alpha'}$ are the functors given in \cref{eq:BKR-compose-for}.
\end{proposition}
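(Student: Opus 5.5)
We take $\gamma=\gamma_0$ as provided by \Cref{prop:commute-involution}. Then $\Psi'=\Psi\circ\Phi_{\gamma_0}$ satisfies $\bL\iota_{X'}^*\circ\Psi'\simeq\Psi'\circ\bL\iota_X^*$. The plan is to promote this commutation to an equivalence $\Psi'_I\colon \Db(X,\alpha)_I\to\Db(X',\alpha')_I$ of equivariant categories. This equivalence should commute with the forgetful functors, i.e.\ $\mathtt{For}\circ\Psi'_I\simeq\Psi'\circ\mathtt{For}$. We then set
\[
\overline{\Psi}:=\Xi_{X',\alpha'}^{-1}\circ\Psi'_I\circ\Xi_{X,\alpha},
\]
using the twisted BKR equivalence of \Cref{thm:twisted-bkr} on both sides. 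Commutativity of \cref{eq:des-kummer} then follows formally from $\Theta=\mathtt{For}\circ\Xi$ in \cref{eq:BKR-compose-for}:
\[
\Theta_{X',\alpha'}\circ\overline{\Psi}=\mathtt{For}\circ\Psi'_I\circ\Xi_{X,\alpha}\simeq\Psi'\circ\mathtt{For}\circ\Xi_{X,\alpha}=\Psi'\circ\Theta_{X,\alpha}.
\]

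The construction of $\Psi'_I$ is done at the level of Fourier--Mukai kernels. Let $\cE'\in\Db(X\times X',\alpha^{-1}\boxtimes\alpha')$ be the kernel of $\Psi'$. The relation $\bL\iota_{X'}^*\circ\Psi'\simeq\Psi'\circ\bL\iota_X^*$ translates, by uniqueness of kernels (\cite{CS07}), into an isomorphism $\lambda\colon(\iota_X\times\iota_{X'})^*\cE'\simeq\cE'$, compatible with the fixed identifications $\iota^*\alpha\simeq\alpha$ and $\iota^*\alpha'\simeq\alpha'$. We want $\lambda$ to be an $I$-linearization, which amounts to the cocycle condition
\[
\lambda\circ(\iota_X\times\iota_{X'})^*\lambda=\id
\]
after inserting $\sigma_{g,g}$. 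By \Cref{thm:admissibility}'s Step 2, up to shift $\cE'$ is a simple twisted sheaf, indeed the universal sheaf of a fine twisted moduli space. Hence $\operatorname{End}(\cE')=\CC$, and the composite above is a scalar $c$. Rescaling $\lambda$ by $c^{-1/2}$ makes it a genuine linearization; this is the usual statement that the obstruction lives in $\rH^2(I,\CC^*)=0$. The linearized kernel $(\cE',\lambda)$ then defines a functor $\Psi'_I$ on equivariant categories, acting by $(F,\theta)\mapsto(\Psi'(F),\Psi'(\theta)\circ\lambda)$. It commutes with $\mathtt{For}$ by construction. It is an equivalence because its quasi-inverse is built in the same way from the kernel of $(\Psi')^{-1}$, equipped with the inverse linearization; unit and counit remain isomorphisms since $\mathtt{For}$ is conservative.

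The main obstacle is the bookkeeping of the twists in the linearization. The identifications $\iota^*\alpha\simeq\alpha$ and $\iota^*\alpha'\simeq\alpha'$, together with the $\sigma_{g,g}$, enter the equivariant structures on both sides. We must check that the scalar ambiguity in $\lambda$ can be normalized compatibly with these chosen $\sigma$'s. First, we would reduce to the simple-sheaf case using the shift statement. Second, we would note that any mismatch is a character or scalar of $I$ that can be absorbed. Here it matters that twisting by the sign character of $I$ gives an autoequivalence of $\Db(X,\alpha)_I$, and this autoequivalence is itself compatible with $\mathtt{For}$. So even an unremovable sign would not affect the commutativity of \cref{eq:des-kummer}.
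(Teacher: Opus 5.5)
Your proposal is correct and follows the paper's proof: you pick $\gamma_0$ from \Cref{prop:commute-involution}, turn the commutation into a kernel isomorphism $(\iota_X\times\iota_{X'})^*\cE'\simeq\cE'$, normalize it into an $I$-linearization (the obstruction lies in $\rH^2(I,\CC^*)=0$), and conjugate the resulting $\Psi'_I$ by the twisted BKR equivalences, with commutativity coming from $\Theta=\mathtt{For}\circ\Xi$. Your explicit rescaling by $c^{-1/2}$ is a more careful version of the paper's one-line appeal to $\rH^2(I,\CC^*)=0$; it uses only $\operatorname{End}(\cE')=\CC$, which holds for the kernel of any Fourier--Mukai equivalence, so neither the reduction to a simple sheaf nor the worry about an unremovable sign is needed.
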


\begin{proof}
First, we show that  $\Psi' \coloneqq \Psi \circ \Phi_{\gamma}$ descends to the associated equivariant categories under the action of $I$. By \Cref{prop:commute-involution}, we may choose $\gamma \in A_{X,\alpha}$ such that $\bL\iota_{X'}^* \circ \Psi' \simeq \Psi' \circ \bL\iota_X^*$. There is a canonical choice of this commutation isomorphism, which is given by the canonical isomorphism of kernels:
\[
\vartheta_g \colon (\iota_X\times \id_{X'})^*\cP' \xlongrightarrow{\sim} (\id_X\times \iota_{X'})^*\cP'.
\]
As $I$ is cyclic, $\rH^2(I,\CC^*)=0$. Hence $\vartheta_g$ satisfies the coherence condition (see \cite[\S 2.1]{BO23}), and therefore $\Psi'$ descends to an equivalence of $I$-equivariant categories:
\[
\Psi'_I \colon \Db(X,\alpha)_I \xlongrightarrow{\sim} \Db(X',\alpha')_I.
\]

Using the twisted BKR equivalences from \Cref{thm:twisted-bkr}, we obtain a derived equivalence
\[
\overline{\Psi}: \Db(\km(X),\bar\alpha) \xlongrightarrow{\Xi_X} \Db(X,\alpha)_I \xlongrightarrow{\Psi'_I} \Db(X',\alpha')_I \xlongrightarrow{\Xi_Y^{-1}} \Db(\km(X'),\overline{\alpha'}).
\]
The resulting functor is therefore a derived equivalence between the associated twisted Kummer surfaces. The commutativity of \cref{eq:des-kummer} follows from the commutative diagram \cref{eq:kummer-bkr} in \Cref{thm:twisted-bkr}.
\end{proof}

\section{Bloch's conjecture for twisted abelian surfaces} \label{sec:bloch-surface}

We first review the action of twisted derived equivalences on Chow groups for K3 surfaces, which serves as the foundation for the abelian surface case via the Kummer construction.

\subsection{The Albanese kernel of twisted K3 surfaces}

Recall that Beauville and Voisin \cite{BV04} established a canonical decomposition for the Chow group of a K3 surface $S$:
\[
\operatorname{CH}_*(S) = R(S) \oplus \operatorname{CH}_0(S)_{\mathrm{alb}},
\]
where $R(S) = \operatorname{CH}_2(S) \oplus \operatorname{CH}_1(S) \oplus \mathbb{Z}\cdot\mathfrak{o}_S$ is the Beauville--Voisin subgroup, and $\mathfrak{o}_S$ is the Beauville--Voisin class. For any derived equivalence $\Psi : \Db(S) \xrightarrow{\sim} \Db(S')$ between K3 surfaces, Huybrechts \cite[Theorem 2]{Huybrechts10} proved that $\Psi^{\CH_*}$ preserves the Beauville--Voisin subgroup, and hence induces a map between $\operatorname{CH}_0(S)_{\mathrm{alb}}$ and $\operatorname{CH}_0(S')_{\mathrm{alb}}$.

For twisted K3 surfaces, it remains open in general whether $\Psi^{\CH_*}$ preserves $R(S)$.  However, the result of \cite{CLZZ:2024} shows that a suitable modification of $\Psi^{\CH_*}$ preserves the Beauville--Voisin subgroup. Translating this modified preservation statement back to the usual Chow action gives the following consequence.

\begin{proposition}[{\cite[Prop. 5.1]{CLZZ:2024}}] \label{prop:twisted-derived-preserves-BV}
Let $\Psi : \Db(S,\gamma) \to \Db(S',\gamma')$ be a derived equivalence between twisted K3 surfaces. Then
\[
\Psi^{\CH_*}\bigl(\operatorname{CH}_0(S)_{\mathrm{alb}}\bigr) = \operatorname{CH}_0(S')_{\mathrm{alb}}.
\]
\end{proposition}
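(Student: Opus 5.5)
The statement is quoted from \cite[Prop.~5.1]{CLZZ:2024}. The plan is to reduce it to the untwisted/Beauville--Voisin picture by deformation-free Chow-theoretic arguments. The key idea is to replace $\ch_{S\times S'}(\cP)$ by a modified class that is a polynomial in divisor classes, Chern classes of the kernel, and the Beauville--Voisin classes. Huybrechts' method applies to such classes.

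\textbf{Step 1: reduce to a rational Chow action.} Since Chow groups are taken with $\QQ$-coefficients, we may use a $\mathbf{B}$-field lift $B$ of $\gamma$ and write the twisted Chern character as $\ch^{B}(E)=\exp(B)\cdot \ch(E^{\otimes r})^{1/r}$ formally. Concretely, we pass to the $r$-th tensor power, which is untwisted. Every class in $\CH_0(S)_{\alb}$ is then represented as a difference of points. The image of a point $x$ under $\Psi^{\CH_*}$ is $\ch_{S',\gamma'}(\cP|_{\{x\}\times S'})$.

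\textbf{Step 2: show the degree-zero and codimension-one parts of the image are harmless.} For two points $x,y\in S$, the objects $\cP_x$ and $\cP_y$ have the same Mukai vector. Hence $\Psi^{\CH_*}(x-y)$ has trivial components in $\CH^0$ and $\CH^1$: numerical and rational equivalence agree there, since $\Pic$ of a K3 is discrete. So $\Psi^{\CH_*}(x-y)$ lies in $\CH_0(S')_{\hom}$, and for a K3 surface this equals $\CH_0(S')_{\alb}$. This gives the inclusion $\Psi^{\CH_*}(\CH_0(S)_{\alb})\subseteq \CH_0(S')_{\alb}$.

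\textbf{Step 3: get equality from the inverse.} Apply the same argument to $\Psi^{-1}$, whose kernel is the twisted derived dual of $\cP$ up to shift. Its Chow action is the transpose correspondence, which also sends $\CH_0(S')_{\alb}$ into $\CH_0(S)_{\alb}$. It remains to check that $(\Psi^{-1})^{\CH_*}\circ\Psi^{\CH_*}$ is the identity on $\CH_0(S)_{\alb}$.

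\textbf{Main obstacle.} The hard point is functoriality of the twisted Chern character under composition of kernels. The Chow-level Grothendieck--Riemann--Roch must be compatible with the chosen $\mathbf{B}$-field lifts, so that the composite correspondence equals $\ch$ of the convolution of kernels, i.e.\ of $\cO_\Delta$. There is a possible discrepancy: a factor $\exp(B'-B'')$ when the $\mathbf{B}$-fields on the two sides are chosen independently. This factor acts unipotently and trivially on $\CH_0$, so it does not affect the statement on zero-cycles.

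I would handle this by citing the twisted GRR of \cite{HS05}, which is valid on Chow groups with $\QQ$-coefficients. Once the composite is the identity up to such a unipotent factor, the map on Albanese kernels is bijective, giving equality.
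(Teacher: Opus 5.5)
Your argument is correct, but it is not the paper's route. The paper gives no self-contained proof. It imports \cite[Prop.~5.1]{CLZZ:2024}, where the statement is deduced from the deeper fact that a suitably modified Chow action preserves the twisted Beauville--Voisin subgroup.

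Your Steps 2--3 show that no Beauville--Voisin input is needed for the statement as formulated. The inclusion is formal. $\Psi^{\CH_*}$ is given by a class in $\CH^*(S\times S')$, so it commutes with cycle class maps and sends homologically trivial cycles to homologically trivial ones. On a K3 surface, $\CH^0\oplus\CH^1$ injects into cohomology and $\Alb(S')=0$, so the homologically trivial part of $\CH^*(S')$ is exactly $\CH_0(S')_{\alb}$. Your argument via the constancy of the Mukai vector of $\cP_x$ is this same observation. Equality then needs only a Chow-level Grothendieck--Riemann--Roch for convolution of twisted kernels.

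The CLZZ route buys more, namely compatibility with the whole decomposition $R(S)\oplus\CH_0(S)_{\alb}$. However, only the restriction to Albanese kernels is used in \Cref{prop:preserve-albanese-kernel}.

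Some tidying is needed.
\begin{itemize}
\item Your opening ``key idea'' and Step 1 (writing $\exp(B)$ in Chow, passing to $E^{\otimes r}$) are never used and should be dropped. A transcendental $\mathbf{B}$-field has no meaning in $\CH^*$.
\item In Step 3, the paper's convention uses the correspondence $\ch(\cP)$ rather than $\ch(\cP)\sqrt{\operatorname{td}}$. So the discrepancy on the middle factor includes $\operatorname{td}(S')=1+2\fro_{S'}$, in addition to normalization factors $\exp(\ell)$ with $\ell\in\CH^1(S')$. These act trivially only because Step 2 has already put $\Psi^{\CH_*}(z)$ in $\CH_0(S')$.
\item The GRR you need is a Chow-level statement for a Chow-level twisted Chern character, not the cohomological one of \cite{HS05}. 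Such a character is available as in \cite[\S 5]{CLZZ:2024}, or by tensoring with a fixed locally free twisted sheaf.
\item Surjectivity uses $\Psi^{\CH_*}\circ(\Psi^{-1})^{\CH_*}=\id$ on $\CH_0(S')_{\alb}$. This follows by the same argument with $S$ and $S'$ swapped.
\end{itemize}
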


\subsection{Kummer descent and the Albanese kernel of twisted abelian surfaces}

For any abelian variety $X$ of dimension $g$, there is a canonical decomposition
\[
\CH_i(X)=\bigoplus_{s=-i}^{g-i}\CH_i(X)_{(s)},
\]
where $\CH_i(X)_{(s)}$ is the eigenspace for the action of multiplication by $n$ (see \cite{BeauvilleChow}):
\[
\mathbf{n}_{X\ast} (z)= n^{2i+s}z \qquad\text{for }z\in\CH_i(X)_{(s)},
\]
For $0$-cycles, this gives
\[
\CH_0(X)=\CH_0(X)_{(0)}\oplus\CH_0(X)_{(1)}\oplus\CH_0(X)_{(2)}.
\]
Here $\CH_0(X)_{(2)}$ is the Albanese kernel $\CH_0(X)_{\mathrm{alb}}$ (the kernel of the Albanese map), 
$\CH_0(X)_{(0)}$ is generated by a point class (e.g. $\chow{0}$), and $\CH_0(X)_{(1)}$ is a complementary part mapping isomorphically to the Albanese part.

Now let $(X,\alpha)$ be a twisted abelian surface, and let $(\km(X),\bar\alpha)$ be the associated twisted Kummer surface. 
The Kummer construction in \Cref{subsection:kummer} induces an inclusion map
\begin{equation}\label{eq:kummer-chow}
  \pi_\ast \circ \kappa^\ast: \CH_0(\Km(X))\hookrightarrow\CH_0(X).
\end{equation}
which sends $\CH_0(\km(X))_{\rm alb}$ to $\CH_0(X)_{\alb}$.

Recall from \Cref{thm:twisted-bkr} that the twisted BKR equivalence gives a functor
\[
\Theta_{X,\alpha} := \mathtt{For} \circ \Xi = \bR\pi_* \circ\bL\kappa^*: \Db(\km(X),\bar\alpha) \longrightarrow \Db(X,\alpha),
\]
where \(\Xi: \Db(\km(X),\bar\alpha) \to \Db(X,\alpha)_I\) is the twisted BKR equivalence and \(\mathtt{For}: \Db(X,\alpha)_I \to \Db(X,\alpha)\) is the forgetful functor. 
The commutative diagram \cref{eq:kummer-bkr} implies the following compatibility on Chow groups.

\begin{lemma}\label{lem:chowofTheta}
For any twisted abelian surface $(X,\alpha)$, the functor $\Theta_{X,\alpha}$ induces a map on Chow groups
\[
\Theta_{X,\alpha}^{\CH_*}: \CH_*(\km(X)) \longrightarrow \CH_*(X)
\]
which can be identified with the map given in \cref{eq:kummer-chow}. In particular, $\Theta_{X,\alpha}^{\CH_\ast}$ restricts to an isomorphism
\[
\CH_0(\Km(X))_{\alb}\cong\CH_0(X)_{(2)}.
\]
\end{lemma}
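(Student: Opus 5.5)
The plan is to compute the Chow action of $\Theta_{X,\alpha}=\bR\pi_*\circ\bL\kappa^*$ by Grothendieck--Riemann--Roch, comparing it with the correspondence $\pi_*\circ\kappa^*$, and then to check that $\pi_*\kappa^*$ restricts to an isomorphism on Albanese kernels.

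\textbf{Step 1: reduce to the correspondence $\pi_*\kappa^*$.} The kernel of $\Theta_{X,\alpha}$ is $\cO_Z$, with $Z\cong\widetilde{X}$ embedded via $(\kappa,\pi)$. By definition, $\Theta^{\CH_*}(z)=q_*\bigl(p^*z\cdot\ch(\cO_Z)\bigr)$, computed with the twisted Chern character. Since the class $\bar\alpha^{-1}\boxtimes\alpha$ restricts trivially to $Z$ (\Cref{lem:brauer-iso}), the twisted Chern character of $\cO_Z$ is the ordinary one, up to multiplication by $\exp$ of the chosen \textbf{B}-field difference; this factor is $1$ plus classes of positive codimension. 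Applying GRR to the embedding $j\colon Z\hookrightarrow \km(X)\times X$ gives
\[
\ch(\cO_Z)=j_*\bigl(\mathrm{td}(N_{j})^{-1}\bigr)=[Z]+(\text{higher codimension terms}).
\]
For a zero-cycle $z$, the degree-$0$ part of the output equals $q_*(p^*z\cdot[Z])=\pi_*\kappa^*z$. The higher-codimension terms of $\ch(\cO_Z)$ and of the $\exp(B)$ factor, when paired with $p^*z$, would push forward to cycles of negative dimension, so they vanish. Hence $\Theta^{\CH_*}|_{\CH_0}=\pi_*\kappa^*$, which is the map in \cref{eq:kummer-chow}. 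In all degrees one gets $\pi_*(\kappa^*(-)\cdot\mathrm{td}\text{-type correction})$, which is the identification up to these lower-order corrections. An alternative argument uses the factorization $\Theta=\bR\pi_*\circ\bL\kappa^*$, GRR for each functor separately, and the relation $\mathrm{td}(\widetilde X)=\pi^*\mathrm{td}(X)\cdot(\text{exceptional correction})$.

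\textbf{Step 2: isomorphism on Albanese kernels.} First, $\kappa^*\colon\CH_0(\km X)\to\CH_0(\widetilde X)$ is injective with rational coefficients, since $\kappa_*\kappa^*=2$. Next, $\pi_*$ is an isomorphism on $\CH_0$, because the blow-up only adds rational curves. So $\pi_*\kappa^*$ is injective, and its image is $\CH_0(X)^{\iota}$. Albanese kernels map into Albanese kernels because $\Alb(\km X)=0$.

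It remains to show $\CH_0(X)_{(2)}=\CH_0(X)_{\alb}\cap\CH_0(X)^\iota$, with everything in $\CH_0(X)_{(2)}$ being $\iota$-invariant. Since $\iota=\mathbf{(-1)}_X$, we have $\iota_*=(-1)^{s}$ on $\CH_0(X)_{(s)}$, so $\CH_0(X)_{(2)}$ is fixed by $\iota_*$. Any $\iota$-invariant cycle $w$ satisfies $w=\tfrac12(w+\iota_*w)=\pi_*\kappa^*(\tfrac12\kappa_*\pi^*w)$, so $w$ lies in the image. Finally, invariant classes of degree $0$ in $\CH_0(\km X)_{\alb}$ correspond to invariant classes in $\CH_0(X)_{(2)}$, since the $(1)$-part is anti-invariant. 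This gives the isomorphism $\CH_0(\km X)_{\alb}\cong\CH_0(X)_{(2)}$.

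The main obstacle is Step 1: one has to verify carefully that the twisted Chern character of $\cO_Z$ has fundamental cycle $[Z]$ as its leading term, and that the dependence on the \textbf{B}-field choices does not affect the $\CH_0$ output. I would handle this by working on $Z$, where the twist is trivialized. There the twisted Chern character is $\exp(B)$ times the untwisted one, with $B$ pulled back to $Z$, and that factor is a unit whose degree-$0$ part is $1$.
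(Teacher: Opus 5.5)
Your proposal is correct and follows essentially the same route as the paper, which reads off $\Theta_{X,\alpha}^{\CH_*}=\pi_*\kappa^*$ on zero-cycles from the factorization $\Theta_{X,\alpha}=\bR\pi_*\circ\bL\kappa^*$ in \cref{eq:kummer-bkr}, and treats the identification $\CH_0(\km(X))\cong\CH_0(X)_{(0)}\oplus\CH_0(X)_{(2)}$ as standard. You supply the details the paper omits: the dimension count showing that only the leading term $[Z]$ of $\ch(\cO_Z)$ acts on $\CH_0$, and the projector argument using $\kappa_*\kappa^*=2$, $\kappa^*\kappa_*=1+\tilde\iota_*$ and $\iota_*=(-1)^s$ on $\CH_0(X)_{(s)}$.
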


For our purposes, the induced action on the Albanese kernel $\CH_0(X)_{\alb}$ is essential, since this is the part relevant to the Bloch-type statement.

\begin{proposition}\label{prop:albanese-kernel-action}
For any $\gamma \in A_{X,\alpha}$, the induced action of $\Phi_\gamma$ on $\CH_0(X)_{\mathrm{alb}}$ is the identity.
\end{proposition}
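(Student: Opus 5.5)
The plan is to compute the Chow action of $\Phi_\gamma$ directly from its description in the equivariant model. Write $\gamma=[(u,\nu)]$ with $u\in X$ and $\nu\in\widehat{X}$, so that $\Phi_\gamma$ corresponds to $t_{u*}(-)\otimes(\sP_\zeta,\chi_\nu)$ on $\Db(X)_{G,\rho,a}$. On Chow groups, the action of $\Phi_\gamma$ is determined by $\ch_{X,\alpha}(F)\mapsto\ch_{X,\alpha}(\Phi_\gamma F)$. The first step is to factor $\Phi_\gamma$ as $\Phi_{[(u,0)]}\circ\Phi_{[(0,\nu)]}$, using that $\gamma\mapsto\Phi_\gamma$ is a homomorphism (\Cref{prop:twisted-MP}), and to treat the two factors separately. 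I will also use that $\Phi_\gamma$ acts trivially on cohomology, so that by compatibility with the cycle class map the induced Chow correspondence is homologically equal to the diagonal.

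For the translation factor: after pulling back along the étale cover $\mathbf r_X$, or working étale locally on the gerbe, the twisted Chern character is compatible with pushforward along $t_u$. I therefore expect $\Phi_{[(u,0)]}^{\CH_*}=t_{u*}$ on $\CH_*(X)$. Translation acts trivially on $\CH_0(X)_{(2)}$. The standard argument is Beauville's: $t_{u*}z-z=(\chow{u}-\chow{0})*z$, and the Pontryagin product sends $\CH_0(X)_{(1)}*\CH_0(X)_{(2)}$ into $\CH_0(X)_{(3)}=0$.

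For the line-bundle factor: tensoring by $(\sP_\zeta,\chi_\nu)$ multiplies $\ch_{X,\alpha}$ by $\exp(c_1)$ of a degree-zero class $\ell$ in $\CH^1(X)_{(1)}$, i.e. an element of $\Pic^0$ tensored with $\QQ$. Since $\nu$ corresponds to a $\QQ$-line bundle $\mathcal{P}_\zeta^{1/r}$ up to twisting data, I will justify this by first computing $\mathbf r_X^*$ of everything and then dividing by $r^4$. For $z\in\CH_0(X)$ we have $\exp(\ell)\cdot z=z$ because $\ell\cdot z$ has negative dimension. The only subtle point is that the twisted Chern character of $\Phi_\gamma F$ may differ by the $B$-field convention. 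Here I will use \Cref{example:B-fields}: changing the $B$-field is multiplication by $\exp$ of a class, which again fixes $0$-cycles.

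The main obstacle is making rigorous the identification of $\Phi_\gamma^{\CH_*}$ with $t_{u*}\circ(\exp(\ell)\cdot-)$ through the equivariant model. This requires checking that $\mathbf r_X^*\circ\ch_{X,\alpha}$ agrees with $\ch$ of the underlying $G$-equivariant object, up to the fixed normalization. If that turns out to be messy, my fallback is to work on $\CH_0(X)_{(2)}$ via $\mathbf r_X^*$. The map $\mathbf r_X^*$ is injective on $\CH_0(X)_{(2)}$ with rational coefficients. Upstairs, the functor becomes an honest untwisted autoequivalence $t_{ru*}(-)\otimes\sP_\zeta$ in the kernel of $\Aut(\Db(X))\to\rU(X\times\widehat X)$, and for such autoequivalences the two computations above are classical.
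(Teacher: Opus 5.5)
Your proposal is correct and follows essentially the same route as the paper: reduce through the equivariant model to the untwisted functor $t_{u*}(-)\otimes\sP_\zeta$, use the Pontryagin product to see that translations fix $\CH_0(X)_{(2)}$, and use that $\ch$ of a line bundle has degree-zero component $1$. Your $\mathbf{r}_X^*$ bookkeeping makes explicit the compatibility of twisted Chern characters that the paper simply asserts. One small correction in the translation step: $\chow{u}-\chow{0}$ lies in $\CH_0(X)_{(1)}\oplus\CH_0(X)_{(2)}$, not only in $\CH_0(X)_{(1)}$. Its $(2)$-part is harmless, since $\CH_0(X)_{(2)}*\CH_0(X)_{(2)}\subset\CH_0(X)_{(4)}=0$.
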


\begin{proof}
Recall that, under the equivalence $\Db(X,\alpha)\simeq \Db(X)_{G,\rho,a}$, $\Phi_\gamma$ corresponds to the functor 
\[
\Phi_{(u,\nu)} \cong t_{u*}(-) \otimes (\sP_\zeta,\chi_\nu),
\]
where $\gamma = [(u,\nu)]$ and $\zeta = \mathrm{r}_{\widehat{X}}(\nu)$.
Its action on $\CH_0(X)_{\mathrm{alb}}$ is induced by the ordinary Fourier--Mukai transform $t_{u*}(-)\otimes \sP_\zeta$. Thus it suffices to consider the untwisted case.

\begin{itemize}
    \item \textbf{Translation by $u$:} For any zero-cycle $z\in\CH_0(X)_{\mathrm{alb}}$, we have $(t_u)_*(z)=z * \chow{u}$, where $*$ denotes the Pontryagin product on $\CH_0(X)$. Since $z$ lies in the Albanese kernel, one has $z * (\chow{u}-\chow{0}) = 0$ (see e.g. \cite[Lemma 3.2]{BeauvilleChow}). Hence $(t_u)_*(z)=z$.
    \item \textbf{Tensor product with $\sP_\zeta$:} Tensoring with the line bundle $\sP_\zeta$ acts by multiplication by its Chern character $\ch(\sP_\zeta)$. Only the degree-$0$ component of $\ch(\sP_\zeta)$ contributes to $\CH_0(X)$, and that component is $1$. Therefore the action is the identity.
\end{itemize}

Combining the two, $\Phi_{(u,\nu)}$ acts as the identity on $\CH_0(X)_{\mathrm{alb}}$, and the same holds for $\Phi_\gamma$ via the equivalence. 
\end{proof}

We now establish an analogue of \Cref{prop:twisted-derived-preserves-BV} for derived equivalences between twisted abelian surfaces.

\begin{proposition}\label{prop:preserve-albanese-kernel}
Let $(X,\alpha)$ and $(X',\alpha')$ be two twisted abelian surfaces, and let 
\[
\Psi : \Db(X,\alpha) \xrightarrow{\sim} \Db(X',\alpha')
\]
be a derived equivalence. Then we have
\[
\Psi^{\CH_*}\bigl(\operatorname{CH}_0(X)_{\mathrm{alb}}\bigr) = \operatorname{CH}_0(X')_{\mathrm{alb}}.
\]
\end{proposition}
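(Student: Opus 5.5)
Reduce via Kummer descent to the twisted K3 case, \Cref{prop:twisted-derived-preserves-BV}. By \Cref{prop:descend-equivalence} there is $\gamma\in A_{X,\alpha}$ such that $\Psi'=\Psi\circ\Phi_\gamma$ fits into the commutative square \cref{eq:des-kummer}, with an equivalence $\overline{\Psi}\colon \Db(\km(X),\bar\alpha)\to\Db(\km(X'),\overline{\alpha'})$ of twisted K3 surfaces. Because all functors in the square are Fourier--Mukai transforms and the Chow action is functorial under composition of kernels (via Grothendieck--Riemann--Roch and the realization $\ch_{X,\alpha}(F)\mapsto \ch_{X',\alpha'}(\Psi(F))$), we get
\[
\Psi'^{\CH_*}\circ\Theta_{X,\alpha}^{\CH_*}=\Theta_{X',\alpha'}^{\CH_*}\circ\overline{\Psi}^{\CH_*}.
\]
Both $\Theta$'s are represented by honest kernels (structure sheaves of $Z\cong\widetilde X$), so this identity is legitimate.

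Given $z\in\CH_0(X)_{\alb}$, use \Cref{lem:chowofTheta} to write $z=\Theta_{X,\alpha}^{\CH_*}(w)$ with $w\in\CH_0(\km(X))_{\alb}$. Then
\[
\Psi'^{\CH_*}(z)=\Theta_{X',\alpha'}^{\CH_*}\bigl(\overline{\Psi}^{\CH_*}(w)\bigr),
\]
and \Cref{prop:twisted-derived-preserves-BV} places $\overline{\Psi}^{\CH_*}(w)$ in $\CH_0(\km(X'))_{\alb}$. Applying \Cref{lem:chowofTheta} for $X'$ puts the image in $\CH_0(X')_{\alb}$. So $\Psi'^{\CH_*}$ maps $\CH_0(X)_{\alb}$ into $\CH_0(X')_{\alb}$. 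Surjectivity follows the same way, since $\overline{\Psi}^{\CH_*}$ is onto the Albanese kernel and $\Theta_{X',\alpha'}^{\CH_*}$ is an isomorphism on these pieces.

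Finally, remove the adjustment. We have $\Psi=\Psi'\circ\Phi_{-\gamma}$, and $\Phi_{-\gamma}^{\CH_*}$ is the identity on $\CH_0(X)_{\alb}$ by \Cref{prop:albanese-kernel-action}. Hence $\Psi^{\CH_*}(\CH_0(X)_{\alb})=\Psi'^{\CH_*}(\CH_0(X)_{\alb})=\CH_0(X')_{\alb}$.

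The main obstacle is the compatibility of Chow actions with the square. The square commutes only up to isomorphism of functors, so we must check that the composed kernels have the same twisted Chern character. This needs the twisted Chern characters of the three surfaces to be normalized compatibly, using the identification $\pi^*\alpha=\kappa^*\bar\alpha$ of \Cref{lem:brauer-iso}, so that $\Theta^{\CH_*}$ really equals $\pi_*\kappa^*$ on zero-cycles and not a B-field-shifted variant. I would first fix the B-field lifts so that the twisted Chern character on $\widetilde X$ pulls back from both sides. I would then note that on $\CH_0$ only the degree-zero part of the B-field factor contributes, which is $1$, so any shift acts trivially on the Albanese kernel, just as in \Cref{prop:albanese-kernel-action}.
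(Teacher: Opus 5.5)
Your proposal is correct and follows the paper's proof essentially step for step. You adjust $\Psi$ by some $\Phi_\gamma$ so that it descends to the twisted Kummer surfaces (\Cref{prop:descend-equivalence}), transport the Albanese kernels through $\Theta^{\CH_*}$ (\Cref{lem:chowofTheta}), apply \Cref{prop:twisted-derived-preserves-BV}, and remove the adjustment using \Cref{prop:albanese-kernel-action}. The only differences are cosmetic: you get the reverse inclusion directly from the equality in the twisted K3 statement rather than by running the argument for $\Psi^{-1}$, and you spell out the Chern-character compatibility of the commutative square, which the paper uses without comment.
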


\begin{proof}
First note that any element $\Phi_0 \in \mathbb{Z}\times A_{X,\alpha} \subseteq \operatorname{Aut}(\Db(X,\alpha))$ acts as $\pm\operatorname{id}$ on $\operatorname{CH}_0(X)_{\mathrm{alb}}$. Indeed, such a $\Phi_0$ is a composition of a shift $[k]$ and an element $\Phi_\gamma$ with $\gamma \in A_{X,\alpha}$. The shift acts as $(-1)^k$ on $\operatorname{CH}_0(X)_{\mathrm{alb}}$, while \Cref{prop:albanese-kernel-action} shows that each $\Phi_\gamma$ acts trivially. Hence the total action is $\pm\operatorname{id}$. 

By \Cref{prop:descend-equivalence}, after possibly composing $\Psi$ on the right with such elements (which do not affect the desired inclusion), we may assume that $\Psi$ descends to a derived equivalence $\overline{\Psi} \colon \Db(\km(X),\bar\alpha) \to \Db(\km(X'),\overline{\alpha'})$ between the associated twisted Kummer surfaces. This descent fits into the commutative diagram
\[
\begin{tikzcd}[sep=large]
\Db(\km(X),\bar\alpha) \ar[d,"\Theta_{X,\alpha}"'] \ar[r,"\overline{\Psi}"] & \Db(\km(X'),\overline{\alpha'}). \ar[d,"\Theta_{X',\alpha'}"] \\
\Db(X,\alpha) \ar[r,"\Psi"] & \Db(X',\alpha')
\end{tikzcd}
\]
Applying the induced actions on Chow groups gives a commutative square 
\begin{equation} \label{eq:Chow-diagram}
\begin{tikzcd}[sep=large]
\CH_*(\km(X)) \ar[r,"\overline{\Psi}^{\CH_*}"] \ar[d,"\Theta_{X,\alpha}^{\CH_*}"'] & 
\CH_*(\km(X')) \ar[d,"\Theta_{X',\alpha'}^{\CH_*}"] \\
\CH_*(X) \ar[r,"\Psi^{\CH_*}"] & \CH_*(X').
\end{tikzcd}
\end{equation}
By \Cref{lem:chowofTheta}, $\Theta_{X,\alpha}^{\CH_0}$ and $\Theta_{X',\alpha'}^{\CH_0}$ restrict to isomorphisms
\[
\Theta_{X,\alpha}^{\CH_0}: \CH_0(\km(X))_{\mathrm{alb}} \xrightarrow{\simeq} \CH_0(X)_{\mathrm{alb}},\qquad
\Theta_{X',\alpha'}^{\CH_0}: \CH_0(\km(X'))_{\mathrm{alb}} \xrightarrow{\simeq} \CH_0(X')_{\mathrm{alb}}.
\]
By \Cref{prop:twisted-derived-preserves-BV}, $\overline{\Psi}^{\CH_*}$ sends $\CH_0(\km(X))_{\mathrm{alb}}$ to $\CH_0(\km(X'))_{\mathrm{alb}}$. The commutativity of the diagram then implies the desired inclusion. Applying the same argument to the inverse equivalence gives the reverse inclusion, and hence equality.
\end{proof}

\subsection{Proof of \Cref{thm:main}: reduction to the twisted K3 case}

We begin by recording the following standard consequence of lattice theory and derived Torelli.

\begin{lemma}\label{lem:untwist-k3}
Let $(S,\gamma)$ be a twisted K3 surface.  If $S$ has Picard number $\rho(S)\ge 13$, then there exists an untwisted K3 surface $S'$ such that $\Db(S,\gamma)\simeq\Db(S')$.
\end{lemma}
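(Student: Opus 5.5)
The plan is to find an untwisted K3 surface $S'$ whose Mukai lattice is Hodge isometric to $\widetilde{\rH}(S,\gamma,\ZZ)$, and then invoke the twisted derived Torelli theorem for K3 surfaces \cite{HS05,Reinecke19}. That theorem says that an orientation-preserving Hodge isometry $\widetilde{\rH}(S,\gamma,\ZZ)\simeq\widetilde{\rH}(S',\ZZ)$ is induced by an equivalence $\Db(S,\gamma)\simeq\Db(S')$. Since $\widetilde{\rH}(S',\ZZ)$ contains the hyperbolic plane $U=\rH^0\oplus\rH^4$ inside its algebraic part, we need the converse direction. If the algebraic part $\widetilde{\rH}_{\rm alg}(S,\gamma,\ZZ)$ contains a copy of $U$, we will realize $\widetilde{\rH}(S,\gamma,\ZZ)$ as $U\oplus \Lambda$, where $\Lambda$ is a Hodge structure of K3 type on the lattice $\Lambda_{\rm K3}=U^{3}\oplus E_8(-1)^2$. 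Surjectivity of the period map (together with projectivity, since $\Lambda^{1,1}$ will contain positive classes) then produces $S'$ with $\rH^2(S',\ZZ)\simeq\Lambda$ Hodge isometrically.

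\textbf{Key step: producing $U$ in the algebraic part.} The twisted transcendental lattice $\rT(S,\gamma)$ is a sublattice of finite index in $\rT(S)$, so it has rank $22-\rho(S)\le 9$. Hence $N:=\widetilde{\rH}_{\rm alg}(S,\gamma,\ZZ)=\rT(S,\gamma)^\perp$ is an even lattice of signature $(2,\rho(S)-1)$ and rank $\rho(S)+2\ge 15$, sitting primitively in the even unimodular lattice $\widetilde\Lambda=U^4\oplus E_8(-1)^2$ of rank $24$. Its discriminant group is isomorphic to that of $\rT(S,\gamma)$, so its length satisfies $\ell(A_N)\le \operatorname{rank}\rT(S,\gamma)\le 9$. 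By Nikulin's criterion (\cite[Cor.~1.13.5]{Nikulin}), an even indefinite lattice with $\operatorname{rank}N\ge \ell(A_N)+3$ splits off a hyperbolic plane, $N\simeq U\oplus N'$. This holds here because $\rho+2\ge 15\ge 12\ge \ell+3$.

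\textbf{Completing the Torelli step.} Taking $U\subset N$ primitive, the orthogonal complement $\Lambda:=U^\perp\subset\widetilde{\rH}(S,\gamma,\ZZ)$ is even unimodular of signature $(3,19)$, hence isomorphic to $\Lambda_{\rm K3}$. It inherits a weight-two Hodge structure whose $(2,0)$-part is the twisted period line. That line lies in $\rT(S,\gamma)_\CC\subset\Lambda_\CC$, so it satisfies the Riemann relations. Surjectivity of the period map gives a K3 surface $S'$ with a Hodge isometry $\rH^2(S',\ZZ)\simeq\Lambda$. Because $\NS(S')\simeq N'$ has signature $(1,\rho-1)$, the surface $S'$ is projective. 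We extend this isometry by $\rH^0\oplus\rH^4\mapsto U$, and compose with $-\id_U$ if needed to make it orientation-preserving. The twisted derived Torelli theorem then yields $\Db(S,\gamma)\simeq\Db(S')$.

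\textbf{Main obstacle.} The step that needs the most care is the lattice-theoretic splitting: the bound $\ell(A_N)\le\operatorname{rank}\rT(S,\gamma)$ has to be justified, and the hypotheses of Nikulin's splitting criterion have to be checked precisely. By comparison, the remaining Torelli and orientation bookkeeping is standard.
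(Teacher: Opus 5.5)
Your proof is correct. It follows a somewhat different lattice-theoretic route from the paper's.

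The paper uses Nikulin's existence theorem \cite[Theorem 1.12.2]{Nikulin} to embed $\rT(S,\gamma)$ abstractly into $\Lambda_{\mathrm{K3}}$. It then obtains $S'$ with a Hodge isometry $\rT(S')\cong\rT(S,\gamma)$ from the period map. Finally it asserts that this isometry extends to a Hodge isometry $\widetilde{\rH}(S,\gamma,\ZZ)\cong\widetilde{\rH}(S',\ZZ)$. That extension step tacitly needs uniqueness of primitive embeddings of $\rT(S,\gamma)$ into $U^{\oplus 4}\oplus E_8(-1)^{\oplus 2}$ (Nikulin's Theorem 1.14.4, whose hypotheses hold here), and the paper does not spell this out.

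You instead split a hyperbolic plane off $\widetilde{\rH}_{\rm alg}(S,\gamma,\ZZ)$ using \cite[Corollary 1.13.5]{Nikulin}. The copy of $\Lambda_{\mathrm{K3}}$ containing $\rT(S,\gamma)$ is then $U^\perp$ inside the twisted Mukai lattice itself. So extending to $\widetilde{\rH}$ is just a choice of isometry on the $U$-summand. This is the standard ``$U\subset\widetilde{\rH}_{\rm alg}$ implies untwistable'' criterion. It makes every step explicit and replaces the paper's embedding-plus-uniqueness pair with a single splitting result.

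One small slip: $N$ has signature $(2,\rho(S))$, not $(2,\rho(S)-1)$. That is what matches your rank $\rho(S)+2$ and the signature $(1,\rho(S)-1)$ of $N'$. With the correct count, your inequality $\rho+2\ge 3+(22-\rho)$ only needs $\rho(S)\ge 12$.
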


\begin{proof}
Consider the twisted transcendental lattice
\[
\rT(S,\gamma) \coloneqq \widetilde{\rH}_{\mathrm{alg}}(S,\gamma,\ZZ)^\perp \subseteq \widetilde{\rH}(S,\gamma,\ZZ).
\]
By Nikulin's lattice embedding theorem \cite[Theorem 1.12.2]{Nikulin}, there exists a primitive embedding
\[
\rT(S,\gamma) \hookrightarrow \Lambda_{\mathrm{K3}}.
\]
where $\Lambda_{\mathrm K3}$ is the unique even unimodular lattice of signature $(3,19)$.
The global Torelli theorem for K3 surfaces provides a K3 surface $S'$ such that there is a Hodge isometry $\rT(S,\gamma) \cong \rT(S')$, where $\rT(S')$ denotes the transcendental lattice of $S'$. This isometry extends to a Hodge isometry of Mukai lattices $\widetilde{\rH}(S,\gamma,\ZZ) \cong \widetilde{\rH}(S',\ZZ)$.  By the derived Torelli theorem for twisted K3 surfaces \cite[Corollary 3.21]{HuybrechtsMacriStellari}, such a Hodge isometry lifts to a derived equivalence $\Db(S,\gamma) \simeq \Db(S')$.
\end{proof}

\begin{remark} \label{rmk:induce-minus-brauer}
The equivalence $\Db(S,\gamma)\simeq\Db(S')$ induces a natural equivalence $\Db(S,-\gamma)\simeq\Db(S')$, given by the composition
\[
\Db(S') \xrightarrow{\ \mathrm{dual}\ } \Db(S')^{\mathrm{op}} \xrightarrow{\ \mathrm{op}\ } \Db(S,\gamma)^{\mathrm{op}} \xrightarrow{\ \mathrm{dual}\ } \Db(S,-\gamma).
\]
Hence an anti-autoequivalence of $\Db(S,\gamma)$ can be identified with an autoequivalence of the untwisted K3 surface $S'$.
\end{remark}

Now we proceed to the proof of \Cref{thm:main}.

\begin{proof}[Proof of \Cref{thm:main}]
We first suppose that $\Psi$ is an autoequivalence. By \Cref{prop:commute-involution} and \Cref{prop:albanese-kernel-action}, we may assume that $\Psi$ commutes with $\bL\iota^*$; this does not affect the statement we need to prove. Hence $\Psi$ descends, via \Cref{prop:descend-equivalence}, to an autoequivalence $\overline{\Psi}$ of $\Db\bigl(\km(X),\bar{\alpha}\bigr)$.

According to \Cref{prop:preserve-albanese-kernel},  there is a commutative diagram \cref{eq:Chow-diagram} relating the $\CH_0$-actions on $\km(X)$ and $X$, whose vertical isomorphisms are induced by $\Theta_{X,\alpha}^{\CH_0}$. Via the cohomological action $\Theta_{X,\alpha}^{\mathrm{tr}}$, the descent of $\Psi$ also yields a compatible commutative diagram on transcendental lattices:
\[
\begin{tikzcd}[sep=large]
\rT(\km(X),\bar{\alpha}) \ar[r,"\overline{\Psi}^{\mathrm{tr}}"] \ar[d,"\Theta_{X,\alpha}^{\mathrm{tr}}"'] &
\rT(\km(X),\bar{\alpha}) \ar[d,"\Theta_{X,\alpha}^{\mathrm{tr}}"] \\
\rT(X,\alpha) \ar[r,"\Psi^{\mathrm{tr}}"] &
\rT(X,\alpha).
\end{tikzcd}
\]

Since $\rank\bigl(\NS(\km(X))\bigr) \ge 17$, \Cref{lem:untwist-k3} implies that $(\km(X),\bar{\alpha})$ is derived equivalent to an untwisted K3 surface. On this untwisted K3 surface the statement of \Cref{conj:autoeq} is known to hold whenever the Picard number is at least $3$ (\cf \cite[Theorem 1.3]{LiYuZhang}). Hence, transferring the equality from the untwisted K3 surface to $(\km(X),\bar{\alpha})$, we obtain
\[
\overline{\Psi}^{\CH_0} = \pm\operatorname{id} \quad\Longleftrightarrow\quad \overline{\Psi}^{\mathrm{tr}} = \pm\operatorname{id}.
\]
The vertical isomorphisms further transfer this equivalence to $\Psi$, yielding
\[
\Psi^{\CH_0} = \pm\operatorname{id} \quad\Longleftrightarrow\quad \Psi^{\mathrm{tr}} = \pm\operatorname{id}.
\]
This proves the assertion for autoequivalences, as $\rT(X,\alpha)_{\QQ}$ is the smallest rational Hodge substructure containing the twisted $(2,0)$-line (see \Cref{subsection:FM-action}).

Now suppose $\Psi$ is an anti-autoequivalence. Consider the composition
\[
\DD \circ \Psi \colon \Db(X,\alpha) \longrightarrow \Db(X,\alpha)^{\mathrm{op}} \longrightarrow \Db(X,-\alpha).
\]
It suffices to prove that this equivalence acts as $\pm\id$ on $\CH_0(X)_{\alb}$. 
As before, we may assume that $\DD\circ\Psi$ commutes with $\bL\iota^*$. 
Hence it descends, via \Cref{prop:descend-equivalence}, to an equivalence
\[
\Db(\km(X),\bar{\alpha}) \longrightarrow \Db(\km(X),-\bar{\alpha}).
\]
By \Cref{rmk:induce-minus-brauer} together with the derived equivalence from \Cref{lem:untwist-k3}, this descended equivalence can be identified with an autoequivalence of an untwisted K3 surface $S'$. 
The statement then follows from the arguments for autoequivalences given above.
\end{proof}

\section{Bridgeland moduli spaces on (twisted) abelian surfaces}

In this section, we connect the surface-level statement proved in \Cref{thm:main} with birational automorphisms of $\mathrm{Kum}_n$-type varieties. We first recall the realization of such varieties as Albanese fibers of Bridgeland moduli spaces on twisted abelian surfaces. Then we combine \Cref{thm:torelli-C} with monodromy and wall-crossing to show that the relevant birational automorphisms are induced by derived (anti-)autoequivalences of the underlying twisted abelian surface. These ingredients altogether give \Cref{thm:main2}.

\subsection{Albanese fibers of Bridgeland moduli spaces}

We denote by $\Stab(X,\alpha)$ the space of Bridgeland stability conditions on $\Db(X,\alpha)$, which is connected \cite[Theorem 3.15]{HuybrechtsMacriStellari}.  
Given $\sigma \in \Stab(X,\alpha)$ and a Mukai vector $\mathbf{v} \in \widetilde{\rH}_{\mathrm{alg}}(X,\alpha,\mathbb{Z})$, we denote by $M_{\sigma}(X,\alpha,\mathbf{v})$ the moduli space of $\sigma$-semistable objects in $\Db(X,\alpha)$ with twisted Mukai vector $\mathbf{v}$ (see \cite{YoshiokaModuli, HuybrechtsMacriStellari, MYY2}).

Assume that $\mathbf{v} \in \widetilde{\rH}_{\mathrm{alg}}(X,\alpha,\mathbb{Z})$ is a primitive Mukai vector and $\sigma \in \Stab(X,\alpha)$ is $\mathbf{v}$-generic. There is an Albanese morphism
\begin{equation}
\mathfrak{alb} \colon M_{\sigma}(X,\alpha,\mathbf{v}) \longrightarrow \Alb\bigl(M_{\sigma}(X,\alpha,\mathbf{v})\bigr).
\end{equation}
We denote by $K_{\sigma}(X,\alpha,\mathbf{v})$ the fiber of $\mathfrak{alb}$ at $0$.

\begin{remark}
In the untwisted case, the Albanese variety is isomorphic to $X \times \widehat{X}$, and the Albanese map is given by (\cf \cite[Theorem 0.1]{YoshiokaModuli})
\[
[E] \longmapsto \bigl( \mathrm{codet}(E-E_0), \det(E-E_0) \bigr)
\]
where $[E_0] \in M_{\sigma}(X,\alpha,\mathbf{v})$ is a reference object. In the twisted case, see the forthcoming \cite{JLL} for the construction of the Albanese map and further details.
\end{remark}

\begin{theorem}[{\cite[Theorem 1.9]{YoshiokaPos}; see also \cite[Remark 4.5]{DebarreMacri}}]
\leavevmode \label{thm:mukai-morphism}
\begin{enumerate}
    \item When $\langle \mathbf{v},\mathbf{v} \rangle \ge 6$, $K_{\sigma}(X,\alpha,\mathbf{v})$ is nonempty of dimension $\langle \mathbf{v},\mathbf{v} \rangle -2$.  In this case, $K_{\sigma}(X,\alpha,\mathbf{v})$ is a smooth projective hyperk\"ahler variety, and the Mukai morphism gives an isomorphism
    \begin{equation}
    \theta_{\mathbf{v}} \colon \mathbf{v}^{\perp} \subset \widetilde{\rH}(X,\alpha,\mathbb{Z}) \longrightarrow \rH^2(K_{\sigma}(X,\alpha,\mathbf{v}),\mathbb{Z}).
    \end{equation}
    \item When $\langle \mathbf{v},\mathbf{v} \rangle = 4$, $K_{\sigma}(X,\alpha,\mathbf{v})$ is isomorphic to a K3 surface.  In this case, the Mukai morphism is not an isomorphism, and $\mathbf{v}^{\perp}$ is identified with the part of the cohomology of $K_{\sigma}(X,\alpha,\mathbf{v})$ coming from the abelian surface plus an extra class.
\end{enumerate}
\end{theorem}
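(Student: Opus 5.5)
The statement is \Cref{thm:mukai-morphism}, which the paper quotes from Yoshioka. My plan is to reduce the twisted case to the untwisted one, where Yoshioka's results on moduli of sheaves on abelian surfaces apply.

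\textbf{Step 1: reduce to twisted Gieseker moduli.} Since $\Stab(X,\alpha)$ is connected, wall-crossing for $\mathbf{v}$-generic $\sigma$ shows that the birational type of $M_\sigma(X,\alpha,\mathbf{v})$, and of its Albanese fiber, is independent of the chamber. More precisely, I would use the Bayer--Macr\`i style argument in the twisted abelian setting. A derived (anti-)equivalence given by \Cref{thm:derived-torelli} lets me move $\mathbf{v}$ to a vector of positive rank. Then, for $\sigma$ in the large volume limit, $M_\sigma(X,\alpha,\mathbf{v})$ becomes a moduli space of $H$-stable $\alpha$-twisted sheaves.

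\textbf{Step 2: deform to the untwisted case.} I would run the deformation argument from the proof of \Cref{thm:admissibility}. This means taking a family $(\mathcal{X},\mathcal{B})\to T$ of polarized abelian surfaces with $\mathbf{B}$-fields, a relative Mukai vector $\mathbf{v}$, and a fiber where $\mathcal{B}_{t_0}$ is of type $(1,1)$. At that fiber the twisted moduli space becomes an untwisted Gieseker moduli space $M_H(X_0,\mathbf{v}_0)$. For these, Yoshioka shows the following. The Albanese map is $(\mathrm{codet},\det)$ to $X_0\times\widehat{X_0}$ and is an isotrivial fibration. For $\langle\mathbf{v},\mathbf{v}\rangle\ge 6$ the fiber is deformation equivalent to $K_{n}(X_0)$ with $2n=\langle\mathbf{v},\mathbf{v}\rangle-2$. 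Also $\theta_{\mathbf{v}}\colon\mathbf{v}^\perp\to\rH^2$ is a Hodge isometry, compared through the Hilbert scheme model via Fourier--Mukai reduction to $\mathbf{v}=(1,0,-n-1)$. The relative moduli space $\mathcal{M}\to T$ is smooth and projective, and the relative Albanese fibers form a smooth family. Smoothness, dimension and the hyperk\"ahler property therefore propagate along $T$. The relative Mukai morphism is a morphism of local systems, so being an isomorphism is an open and closed condition, and it holds at $t_0$; hence it holds at $t_1$.

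\textbf{Step 3: the case $\langle\mathbf{v},\mathbf{v}\rangle=4$.} Here the fiber is a Kummer-type K3 surface, as in Yoshioka's analysis for $n=1$. Then $\theta_{\mathbf{v}}$ has image of finite index, with the Kummer lattice classes complementing it. This is again checked on the untwisted Hilbert scheme model $K_1(X)=\km(X)$ and transported by deformation.

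\textbf{Main obstacle.} I expect the hardest point to be the existence and good behaviour of the Albanese map in the twisted and Bridgeland setting, in particular that it is isotrivial with connected fibers. I would define it as the relative version of $(\mathrm{codet},\det)$ using the equivariant model $\Db(X)_{G,\rho,a}$. Then I would check that it specializes correctly in families, so that the deformation in Step 2 carries the Albanese fibers along with the moduli spaces.
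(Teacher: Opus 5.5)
The paper gives no proof of this statement; it is quoted from Yoshioka and Debarre--Macr\`i, so there is no internal argument to compare with. Your outline is the standard route to such a result: move to a twisted Gieseker chamber, deform the twisted surface to an untwisted one, and invoke Yoshioka's untwisted results for $K_H(\mathbf{v})$. You are also right that the twisted Albanese map is the delicate point; the paper itself defers its construction to forthcoming work. As written, however, Step 3 is false. The lattice $\mathbf{v}^{\perp}$ has rank $7$ while $\rH^2$ of a K3 surface has rank $22$, so the image of $\theta_{\mathbf{v}}$ cannot have finite index, and the rank-$16$ Kummer lattice cannot complement it ($7+16\neq 22$). On the model $K_1(X)=\km(X)$ with $\mathbf{v}=(1,0,-2)$, the image is a rank-$7$ sublattice of infinite index. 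Rationally it is spanned by the classes $\theta_{\mathbf{v}}(\alpha)$, $\alpha\in\rH^2(X,\ZZ)$, together with one class in the span of the sixteen exceptional curves. The classes $\theta_{\mathbf{v}}(\alpha)$ pull back to $2\alpha$ under the degree-$2$ map $X\dashrightarrow K_1(X)$, $a\mapsto\{a,-a\}$, so the intersection form on them is twice the Mukai pairing. ``Not an isomorphism'' is then just a rank count, and that is all part (2) claims.

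Step 2 has a real gap. Your open-and-closed propagation of nonemptiness, connectedness, the hyperk\"ahler property and bijectivity of $\theta_{\mathbf{v}}$ all rest on smoothness of $\mathcal{M}\to T$. That requires $\mathcal{H}_t$ to be $\mathbf{v}_t$-generic on every fibre, which is not automatic. At $t_0$ the N\'eron--Severi group jumps (it acquires $r\mathcal{B}_{t_0}$), and the same happens at every Noether--Lefschetz point of $T$. A new class $\xi\in\NS(\mathcal{X}_t)\cap H^{\perp}$ with $\xi^2$ in the Bogomolov wall range puts $H$ on a wall. Then $\mathcal{M}_t$ can acquire strictly semistable points and the family stops being smooth there. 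You must arrange the data to prevent this. For example, replace $B$ by $B+\ell$ with $\ell\in\rH^2(X,\ZZ)$ so that the $H^{\perp}$-component of $r(B+\ell)$ is very negative, and choose $T$ off the finitely many Noether--Lefschetz divisors of bounded discriminant that carry walls through $H$.

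Likewise, Step 1 only gives a birational map from $K_\sigma$ to a twisted Gieseker Albanese fibre. Such a map is an isomorphism in codimension one between $K$-trivial varieties and is induced by (anti-)equivalences compatible with the Mukai vectors. So it transports $\pi_1$, $h^{2,0}$, and $\rH^2$ together with $\theta_{\mathbf{v}}$. It does not give smoothness or projectivity of $K_\sigma$ itself; these must be checked directly for each generic $\sigma$:
\begin{itemize}
\item unobstructedness, from the vanishing of traceless $\Ext^2$;
\item projectivity of $M_\sigma$;
\item isotriviality of its Albanese map, via the action of $A_{X,\alpha}$ through the $\Phi_\gamma$.
\end{itemize}
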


For the rest of this section, unless otherwise stated, $\bfv$ is primitive, $\sigma$ is $\bfv$-generic, and $\langle \mathbf v,\mathbf v\rangle\ge 6$.

\subsection{The Markman--Mukai lattice}

The Markman--Mukai lattice was originally introduced in \cite{MarkmanMono}. Similar to the $\mathrm{K3}^{[n]}$ case, we adapt the following definition.

\begin{definition} \label{def:markman-mukai}
For a hyperk\"ahler variety $Y$ of $\mathrm{Kum}_n$-type, the \emph{Markman--Mukai lattice} is an even integral lattice $\widetilde{\Lambda}_{\mathrm{ab}} \simeq \UU(1)^{\oplus 4}$, together with a primitive Hodge embedding
\[
\rH^2(Y,\ZZ) \hookrightarrow \widetilde{\Lambda}_{\mathrm{ab}}.
\]
The algebraic Markman--Mukai lattice is the $(1,1)$-part $\widetilde{\Lambda}_{\mathrm{ab}}^{1,1}$.
\end{definition}

By the lattice-theoretic results of Nikulin \cite{Nikulin}, and Markman's description of the monodromy orbit \cite[Corollary 9.5]{MarkmanMono}, this embedding is unique up to the natural isometry action.
The orthogonal complement of $\rH^2(Y,\ZZ)$ is generated by a primitive vector $\bfv$ of square $2n+2$. When $Y$ is twisted modular, i.e. $Y = K_{\sigma}(X,\alpha,\bfv)$ for some twisted abelian surface $(X,\alpha)$, this lattice identifies with the usual Mukai lattice $\widetilde{\rH}(X,\alpha,\ZZ)$, and $\rH^2(Y,\ZZ)$ identifies with the orthogonal complement $\bfv^{\perp} \subset \widetilde{\rH}(X,\alpha,\ZZ)$.

Conjecturally, every $\mathrm{Kum}_n$-type hyperk\"ahler variety can be realized as the Albanese fiber of some moduli space of stable objects on a non-commutative abelian surface.
The Markman--Mukai lattice then controls such a category and its associated hyperkähler geometry. To be precise, derived equivalences, Fourier--Mukai transforms, wall-crossing, and moduli spaces are reflected as Hodge isometries and monodromy transformations of $\widetilde{\Lambda}_{\mathrm{ab}}$.

\subsection{Monodromy and numerically trivial automorphisms of $\mathrm{Kum}_n$-type varieties} \label{subsection:monodromy-and-numerical-trivial}

Let $Y$ be a hyperk\"ahler manifold.  The second cohomology $\rH^2(Y,\mathbb Z)$ carries the Beauville--Bogomolov--Fujiki pairing $q$, of signature $(3, b_2-3)$.

The \emph{monodromy group} $\Mon(Y)\subseteq \GL(\rH^*(Y,\mathbb Z))$ is generated by monodromy operators.  Denote by $\Mon^2(Y)$ the image of $\Mon(Y)$ in $\mathrm O(\rH^2(Y,\mathbb Z))$. For a hyperk\"ahler manifold of \emph{$\mathrm{Kum}_n$-type}, Markman proved the following description.

\begin{theorem}[{\cite[Theorem~1.4]{MarkmanMono}}]
\label{thm:monodromy-generalized-kummer}
For $Y$ of $\mathrm{Kum}_n$-type with $\dim_{\mathbb C}Y\ge4$, the image $\Mon^2(Y)$ equals
\[
\mathcal W^{\det\cdot\chi}\;\subset\;\mathrm O(\rH^2(Y,\mathbb Z)),
\]
where $\mathcal W$ is the group generated by reflections in vectors of square $\pm2$, and $\det\cdot\chi$ is the character sending each reflection to $-1$.
\end{theorem}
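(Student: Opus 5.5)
Since this is Markman's theorem \cite[Theorem~1.4]{MarkmanMono}, which the paper quotes, I would not reprove it from scratch. The plan below reconstructs its proof in the language of \Cref{thm:derived-torelli} and \Cref{thm:admissibility}. It has two inclusions: $\Mon^2(Y)\subseteq\mathcal W^{\det\cdot\chi}$, and the reverse inclusion. Both are deformation invariant, so I may take $Y=K_\sigma(X,\alpha,\bfv)$ with $\bfv$ primitive, $\langle\bfv,\bfv\rangle=2n+2\ge 6$. Then $\rH^2(Y,\ZZ)\cong\bfv^\perp\subset\widetilde\rH(X,\alpha,\ZZ)\cong\UU^{\oplus4}$ by \Cref{thm:mukai-morphism}.

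\textbf{Lower bound.} I would produce monodromy operators from derived (anti-)equivalences, by the same deformation-of-kernels argument as in the proof of \Cref{thm:admissibility}. An (anti-)autoequivalence $\Psi$ of $\Db(X,\alpha)$ with $\Psi^{\widetilde\rH}(\bfv)=\pm\bfv$ deforms in families of twisted abelian surfaces. Combined with wall-crossing for Bridgeland moduli spaces, this gives parallel transport from $K_\sigma(X,\alpha,\bfv)$ to itself. Its action on $\rH^2$ is $\theta_\bfv\circ\Psi^{\widetilde\rH}\circ\theta_\bfv^{-1}$, possibly composed with $-1$ when $\Psi^{\widetilde\rH}(\bfv)=-\bfv$. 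By \Cref{thm:derived-torelli}, every admissible Hodge isometry occurs this way, and letting the Hodge structure vary, so does every admissible isometry. Restricting such isometries that fix $\bfv$ up to sign to $\bfv^\perp$ yields the products of reflections in vectors of square $\pm2$ on which $\det\cdot\chi$ equals $1$. The orientation character is dealt with by pairing reflections, the twist $\chi$ coming from the sign on $\bfv$. I would finish this inclusion with an explicit check that these products generate $\mathcal W^{\det\cdot\chi}$, via the standard generation of $\mathrm O(\UU^{\oplus3}\oplus\langle-(2n+2)\rangle)$ by $\pm2$-reflections.

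\textbf{Upper bound.} This is the harder direction, and I expect the main obstacle here. One must show that no monodromy operator acts through an element with $\det\cdot\chi=-1$. My first attempt would use the odd cohomology: $\rH^3(Y,\QQ)$ contains $\rH^1(X)\otimes\rH^2$-type classes, identified with the Orlov lattice $\widehat\rH(X,\alpha)$. Monodromy acts compatibly on $\rH^2$ and $\rH^3$ through $\mathrm{Spin}(\widetilde\rH_\QQ)$ via the half-spin representation, as in the proof of \Cref{thm:admissibility} and \cite{GLO}. An element of $\mathrm O(\bfv^\perp)$ lifts to $\mathrm{Spin}$ compatibly with this action exactly when $\det\cdot\chi=1$. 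An isometry with $\det\cdot\chi=-1$ would therefore have to act on $\rH^3$ by an element outside the image of $\mathrm{Spin}$, contradicting the fact that the monodromy representation on $\rH^*(Y)$ is a deformation of the representation attached to kernels. Making this rigorous requires Markman's identification of the whole monodromy representation on $\rH^3(Y)$ with the spin representation twisted by the character $\det\cdot\chi$, including at $n=2$ where $\dim_\CC Y=4$. I would import that identification rather than rederive it.
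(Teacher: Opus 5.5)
\textbf{Verdict: genuine gap.} The paper gives no argument for this statement; it simply imports Markman's theorem. Your proposal therefore has to stand on its own. The two-inclusion architecture is sensible, but the proposal is not a proof as written.

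\textbf{Upper bound.} The inclusion $\Mon^2(Y)\subseteq\mathcal W^{\det\cdot\chi}$ is the substance of Markman's result, and you defer it to the very identification you would have to cite. The mechanism you sketch for it is also wrong: monodromy does not act through $\mathrm{Spin}(\widetilde{\rH}_\QQ)$.
\begin{itemize}
\item For $g\in\mathrm O^+(\bfv^\perp)$ with $\chi(g)=\pm1$, let $\tilde g$ be the extension with $\tilde g(\bfv)=\chi(g)\bfv$. Then $\det\tilde g=\det(g)\chi(g)$, and the orientation character of $\tilde g$ on positive four-spaces of $\widetilde{\rH}$ equals $\chi(g)$.
\item So $\det\cdot\chi=1$ only says $\tilde g\in\SO(\widetilde{\rH})$. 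The element $\tilde g$ lies in the image of $\mathrm{Spin}$ only if, in addition, $\chi(g)=1$.
\item Elements of $\mathcal W^{\det\cdot\chi}$ with $\chi=-1$ fail your criterion. Examples are $R_a\circ(-R_u)$ with $a^2=-2$, $u^2=2$, and exactly the operators your lower bound extracts from anti-equivalences with $\Psi^{\widetilde{\rH}}(\bfv)=-\bfv$.
\end{itemize}
Read literally, your upper bound would cut out an index-$2$ subgroup and contradict your own lower bound. A correct argument needs a group strictly larger than $\mathrm{Spin}$, containing orientation-reversing (duality-type) elements. Moreover, the claim that every parallel transport is ``a deformation of the representation attached to kernels'' is exactly what must be proved; it is not an available fact.

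\textbf{Lower bound.} This inclusion is also only asserted. The step ``letting the Hodge structure vary, so does every admissible isometry'' is false elementwise. An isometry induced by a derived (anti-)equivalence is a Hodge isometry, hence of finite order on the transcendental lattice. An element of $\mathcal W^{\det\cdot\chi}$ acting with infinite order on every rational subspace of $\bfv^\perp$ containing a positive two-plane is therefore never realized this way. What is needed is a generation statement: the monodromy of a connected family of twisted abelian surfaces along which $\bfv$ stays algebraic, together with the Hodge isometries realized at special members, generates $\mathcal W^{\det\cdot\chi}$. Working in one connected family is what makes the $\theta_\bfv$-markings transport compatibly. This lattice-theoretic step is the real content of the inclusion, and your ``explicit check'' does not supply it.

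\textbf{Smaller corrections.}
\begin{itemize}
\item $\mathrm O(\UU^{\oplus3}\oplus\langle-(2n+2)\rangle)$ is \emph{not} generated by $\pm2$-reflections once the discriminant form on $\ZZ/(2n+2)$ has isometries other than $\pm1$. For example, when $n=5$, multiplication by $5$ is such an isometry.
\item Only $\{g\in\mathrm O^+:\chi(g)=\pm1\}$ is generated, by $R_a$ with $a^2=-2$ and $-R_u$ with $u^2=2$. Plain reflections in $+2$-vectors lie in $\mathrm O^-$, so ``pairing reflections'' must use $-R_u$.
\item Fix your sign. When $\Psi^{\widetilde{\rH}}(\bfv)=-\bfv$, the operator on $\rH^2(Y)$ is $\Psi^{\widetilde{\rH}}|_{\bfv^\perp}$ with no extra $-1$: the shift and the dualized universal family each contribute one sign. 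The map $-\Psi^{\widetilde{\rH}}|_{\bfv^\perp}$ reverses the orientation of the positive three-space and cannot be a monodromy operator.
\end{itemize}
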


Now let $f\in \Bir(Y)$ be a birational automorphism. Its action on the second cohomology preserves the Beauville--Bogomolov--Fujiki form.
By standard deformation theory (see \cite[Corollary~9.43]{HuybrechtsBook}), the induced isomorphism $f^*:\rH^2(Y,\mathbb Z)\to \rH^2(Y,\mathbb Z)$ is a Hodge isometry and can be realized as a parallel transport operator; hence $f^*$ belongs to the monodromy group $\Mon^2(Y)$.

As a consequence, we obtain the following result.

\begin{proposition}\label{prop:bir-admissible}
Let $(X,\alpha)$ be a twisted abelian surface and $Y=K_\sigma(X,\alpha,\mathbf{v})$ the Albanese fiber of a Bridgeland moduli space on $(X,\alpha)$ with $\langle\bfv,\bfv\rangle\geq 6$.  
For any $f\in \Bir(Y)$, the induced isometry $f^*\in \mathrm O(\rH^2(Y,\mathbb Z))$ extends to an admissible Hodge isometry in $\mathrm{SO}\bigl(\widetilde{\rH}(X,\alpha,\mathbb Z)\bigr)$, where $\rH^2(Y,\mathbb Z)$ is naturally identified with a sublattice of the twisted Mukai lattice $\widetilde{\rH}(X,\alpha,\mathbb Z)$.
\end{proposition}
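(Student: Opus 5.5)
The plan is to work entirely in lattice theory, identifying $\rH^2(Y,\ZZ)$ with $\bfv^\perp\subset\widetilde{\rH}(X,\alpha,\ZZ)$ via the Mukai morphism $\theta_{\bfv}$ of \Cref{thm:mukai-morphism}. First, since $f^*$ is a parallel transport operator, it lies in $\Mon^2(Y)$. \Cref{thm:monodromy-generalized-kummer} then writes $f^*$ as an element of $\mathcal W^{\det\cdot\chi}$, i.e. a product $s_{u_1}\cdots s_{u_k}$ of reflections in vectors $u_i\in\bfv^\perp$ of square $\pm2$, times possibly $-\id$, with $\det(f^*)\cdot\chi(f^*)=1$.

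The extension step goes as follows. Each reflection $s_{u}$ with $u\in\bfv^\perp$, $u^2=\pm2$, is defined on all of $\widetilde{\rH}(X,\alpha,\ZZ)$ by the same formula; it fixes $\bfv$ and is integral because $u^2\mid 2$. So the product $\tilde h:=s_{u_1}\cdots s_{u_k}$ extends the reflection part and fixes $\bfv$. If $-\id$ occurs, we extend it as $-\id$ on $\bfv^\perp$ and $-\id$ on $\bfv$; this means $\tilde h=-\id$ globally, which is integral. Because $\bfv$ is algebraic, $\bfv^\perp\oplus\ZZ\bfv$ has finite index, so the extension is unique once the sign on $\bfv$ is fixed. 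It is a Hodge isometry because $f^*$ preserves the Hodge structure on $\bfv^\perp$ and $\bfv$ is of type $(1,1)$.

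Determinant and admissibility come next. The sign on $\bfv$ is chosen as $\epsilon=\det(f^*|_{\bfv^\perp})$, so that $\tilde h\in\SO$. We must check that this extension is still integral. This is automatic for the two natural choices ($\tilde h$ built from reflections, or its composite with the global $-\id$), and these two choices have determinants differing by $(-1)^8=1$. To fix the sign correctly we therefore use the product of reflections together with a global $-\id$ or $-s_{\bfv}$ as needed, where $s_{\bfv}$ is integral only when $\bfv^2\mid 2\,(\bfv,\cdot)$.

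\textbf{This sign bookkeeping is the main obstacle.} The cleaner route is to use the character condition directly. By Markman's description, $\chi$ records the action on the discriminant group $\bfv^\perp{}^\vee/\bfv^\perp\cong\ZZ/(2n+2)$, and for a Hodge isometry of $\bfv^\perp$ the extension to the unimodular overlattice exists exactly when the action on the discriminant matches $\epsilon$. The condition $\det\cdot\chi=1$ then forces the extension with $\epsilon=\chi(f^*)$ to have determinant $\det(f^*)\epsilon=1$.

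Finally, admissibility means preserving the orientation class, i.e. the spinor norm together with the Shioda orientation on the generated real positive $4$-space. Reflections in $(-2)$-vectors preserve it, and reflections in $(+2)$-vectors compose with the $\det\cdot\chi$ twist appropriately. We would verify that $\mathcal W^{\det\cdot\chi}$ lands precisely in the admissible subgroup of $\SO(\widetilde{\rH})$. We expect to do this by checking it on generators, since the admissible subgroup corresponds to $\SO^+$ and the index-two-subgroup conditions match.
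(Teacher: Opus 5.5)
Your discriminant-group argument is correct, and it is a genuinely different route from the paper's. The paper writes $f^*$ as $\pm$ a product of reflections in $(\pm2)$-vectors and extends the reflections and $-\id$ one at a time. It then gets $\det=1$ from a parity count: an even number of reflections, and $-\id$ has determinant $+1$ in rank $8$. This leans on a generation statement for $\mathcal W$.

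You use only two inputs. The first is $\Mon^2(Y)\subseteq\ker(\det\cdot\chi)$, with $\chi$ the scalar by which $f^*$ acts on $(\bfv^\perp)^\vee/\bfv^\perp\cong\ZZ/(2n+2)$. The second is gluing in the unimodular lattice $\widetilde{\rH}(X,\alpha,\ZZ)$: $f^*\oplus\epsilon$ on $\bfv^\perp\oplus\ZZ\bfv$ extends if and only if $\epsilon=\chi(f^*)$. Hence the extension exists, is unique, and has determinant $\det(f^*)\chi(f^*)=1$. It also gives for free the dichotomy $\tilde h(\bfv)=\chi(f^*)\bfv$ that \Cref{thm:birational-to-equivalence} later relies on.

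The paper's working notion of admissibility for a self-isometry is determinant $1$ in admissible bases (see the proof of \Cref{thm:admissibility}). With that notion, this already proves the statement.

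You can delete the ``sign bookkeeping'' paragraph. The maps $-\tilde h$ and $-s_{\bfv}\circ\tilde h$ restrict to $-f^*$ on $\bfv^\perp$, so they are not competing extensions of $f^*$. The only operator that changes just the sign on $\bfv$ is $s_{\bfv}$, which is non-integral for $\bfv^2\ge 6$; that is exactly the gluing obstruction, so $\epsilon$ is forced. Also, $\bfv^\perp\oplus\ZZ\bfv$ has finite index because $\bfv^2\neq0$; algebraicity of $\bfv$ is what makes the extension a Hodge isometry.

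Your final paragraph, however, aims at a false target. Take a positive $4$-plane $P\oplus\RR\bfv$ with $P\subset\bfv^\perp_\RR$ a positive $3$-plane. Since $f^*\in\mathrm O^+(\bfv^\perp)$ and $\bfv^2>0$, the extension changes the orientation of this $4$-plane by exactly $\epsilon=\chi(f^*)$. So it lies in $\SO^+$ if and only if $\tilde h(\bfv)=\bfv$.

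For a concrete case, let $g=-s_us_w$ with $u^2=2$ and $w^2=-2$. It preserves orientation on $\bfv^\perp$ and satisfies $\det(g)=\chi(g)=-1$, so $\det\cdot\chi(g)=1$. Its extension is $-\id\circ s_us_w$, which reverses the orientation of positive $4$-planes. Checking ``on generators'' that $\mathcal W^{\det\cdot\chi}$ lands in $\SO^+$ would therefore fail.

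The correct conclusion is that the extension is always admissible and in $\SO$. It is orientation-preserving when $\chi(f^*)=1$ and orientation-reversing when $\chi(f^*)=-1$. This is exactly what \Cref{thm:derived-torelli} needs to produce an equivalence or an anti-equivalence respectively. The paper's own closing sentence asserting orientation preservation has the same defect: it is only right when $\tilde h(\bfv)=\bfv$.
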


\begin{proof}
By \Cref{thm:monodromy-generalized-kummer}, $f^*$ is a product of an even number of reflections in vectors of square $\pm2$ (up to $-\id$).  
Each such reflection  on $\rH^2(Y,\mathbb Z)$ extends to a reflection (as well as $-\id$) on the whole $\widetilde{\rH}(X,\alpha,\mathbb Z)$. 
Hence the product of an even number of such extended reflections has determinant $1$ and therefore lies in $\mathrm{SO}\bigl(\widetilde{\rH}(X,\alpha,\mathbb Z)\bigr)$. Since $f^*$ is orientation-preserving, the extended product also preserves the chosen positive orientation of $\widetilde{\rH}(X,\alpha,\RR)$.  Hence the extended Hodge isometry is admissible.
\end{proof}

Next, we characterize birational automorphisms whose action on the second cohomology is trivial. For any hyperk\"ahler variety $Y$ of $\mathrm{Kum}_n$-type, define
\[
\Aut_0(Y) \coloneqq \ker\bigl(\Aut(Y) \to \mathrm{O}(\rH^2(Y,\mathbb{Z}))\bigr)
\]
to be the group of \emph{numerically trivial automorphisms}. It has been shown by Hassett-Tschinkel that $\Aut_0(Y)$ is deformation invariant \cite[Theorem 2.1]{HT}.  Moreover,  Boissière, Nieper-Wißkirchen and Sarti \cite[Corollary 5]{BNS11} have shown that
\[
\Aut_0(Y) \cong (\mathbb{Z}/(n+1)\mathbb{Z})^4 \rtimes \mathbb{Z}/2\mathbb{Z}.
\]

When $Y = K_{\sigma}(X,\alpha,\mathbf{v})$, the group $\Aut_0(Y)$ admits an explicit geometric description. It is generated by:
\begin{itemize}
    \item Translations $t_a$ for $a \in X[n+1]$, acting on sheaves by $E \mapsto t_a^*E \otimes \mathcal{L}$, where $\mathcal{L}$ is a suitable line bundle that adjusts the determinant to keep the Mukai vector fixed.
    \item The involution $(-1)^*$ induced by the inversion map $x \mapsto -x$ on $X$, acting on sheaves by pullback.
\end{itemize}
Both classes of generators are naturally realized by derived autoequivalences.

The presence of numerically trivial automorphisms is one of the main differences from the $\mathrm{K3}^{[n]}$ case, and accounts for the additional arguments required in the proofs.

\subsection{Wall-crossing for the Albanese fibers $K_\sigma(X,\alpha,\bfv)$}

We next recall the wall-crossing result needed to pass between Albanese fibers for different generic stability conditions. This follows from the wall-crossing theory developed in \cite{BayerMacri, MYY2}.

\begin{theorem}[Minamide-Yanagida-Yoshioka] \label{thm:wall-crossing-to-equivalence}
Let $(X,\alpha)$ be a twisted abelian surface.  Let $\sigma, \tau \in \Stab(X,\alpha)$ be generic stability conditions with respect to a primitive Mukai vector $\mathbf{v}$ with $\langle\bfv,\bfv\rangle\geq 6$. There is a birational map $K_{\sigma}(X,\alpha,\mathbf{v}) \dashrightarrow K_{\tau}(X,\alpha,\mathbf{v})$ that is induced by a derived (anti-)autoequivalence $\Psi$, i.e. $E\mapsto \Psi(E)$ such that the action $\Psi^{\widetilde{\rH}}$ is identity or $-\DD$.
\end{theorem}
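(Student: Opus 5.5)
The plan follows the K3 strategy of Bayer--Macr\`i: reduce to a single wall and produce the birational map from a Fourier--Mukai type functor. Since $\Stab(X,\alpha)$ is connected, I would join $\sigma$ and $\tau$ by a path that crosses the wall-and-chamber decomposition for $\mathbf v$ transversally at finitely many walls. Moving $\sigma$ inside a chamber changes neither the moduli space nor the universal family. Because a composition of maps of the required form is again of that form ($\id\circ\id=\id$, $(-\DD)\circ(-\DD)=\id$ on $\widetilde{\rH}$, and compositions of (anti-)equivalences stay (anti-)equivalences), it is enough to treat one wall $\mathcal W$, with adjacent chambers $\sigma_\pm$ and $\sigma_0\in\mathcal W$.

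For a single wall, I would use the classification of walls for Bridgeland moduli on (twisted) abelian surfaces due to Minamide--Yanagida--Yoshioka. This is the twisted analogue of Bayer--Macr\`i's analysis via the rank-two hyperbolic lattice $\mathcal H_{\mathcal W}\subset\widetilde{\rH}_{\rm alg}(X,\alpha,\ZZ)$. On an abelian surface there are no spherical objects, so $\mathcal H_{\mathcal W}$ contains no $(-2)$-classes. The walls therefore fall into three types.
\begin{itemize}
\item Fake walls and flopping walls, where $M_{\sigma_+}$ and $M_{\sigma_-}$ are birational through the common open locus of objects that remain stable. On the complement, the objects are related by the usual identification, and the induced map comes from the identity functor. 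Its action on $\widetilde{\rH}$ is trivial.
\item Totally semistable or divisorial walls, coming from an isotropic class $w\in\mathcal H_{\mathcal W}$ with $\langle w,\mathbf v\rangle$ small. Following Bayer--Macr\`i and MYY, I would first apply a Fourier--Mukai equivalence $\Phi$ to the moduli space $M_{\sigma_0}(w)$, which is again a twisted abelian surface. This transforms $\sigma_0$ into a (shifted) Gieseker-type chamber. There the comparison $M_{\sigma_+}\dashrightarrow M_{\sigma_-}$ is realized by the derived dual $E\mapsto \DD(E)$ composed with $\Phi$ and $\Phi^{-1}$, possibly twisted by a line bundle. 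This is the Uhlenbeck/dual comparison for the Hilbert--Chow type divisorial contraction. The result is an anti-autoequivalence $\Psi$ sending $\sigma_+$-stable objects of class $\mathbf v$ to $\sigma_-$-stable objects of class $\mathbf v$. Checking that it fixes $\mathbf v$ and acts as $-\DD$ on $\widetilde{\rH}$ uses the reflection $\mathbf v\mapsto -\DD$ type formula from the untwisted computation, transported through $\Phi$.
\end{itemize}

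Finally, I would pass to Albanese fibers. Any (anti-)equivalence $\Psi$ inducing a birational map $M_\sigma\dashrightarrow M_\tau$ is compatible with the Albanese maps up to a translation of $\Alb$. Composing $\Psi$ with some $\Phi_\gamma$, $\gamma\in A_{X,\alpha}$, which acts trivially on $\widetilde{\rH}$ and translates the Albanese, makes it send $K_\sigma$ to $K_\tau$, so the restriction is the desired birational map.

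The main obstacle I expect is the divisorial wall step in the twisted setting. There one must check that the wall classification and the identification of the duality-type functor of MYY go through for $\alpha\neq0$. I would first try to reduce this to MYY directly, since their setup already allows twisted abelian surfaces. Failing that, I would use the equivariant model $\Db(X)_{G,\rho,a}$ to transfer the untwisted wall-crossing functor.
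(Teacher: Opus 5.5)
Your plan is essentially the paper's proof. The paper also joins $\sigma$ and $\tau$ by a path crossing finitely many walls, and relies on Minamide--Yanagida--Yoshioka for the fact that each crossing is realized either by the identity or by an elementary contravariant involution: the Fourier--Mukai transform to the moduli space of an isotropic class $w$ with $\langle\mathbf v,w\rangle=1$, followed by the dual-kernel transform and $\DD$. It composes these and then corrects by some $\Phi_\gamma$, $\gamma\in A_{X,\alpha}$, via its lemma on Albanese fibers, where it also checks that $\Phi_\gamma$ fixes every stability condition and that the translation map $X\times\widehat{X}\to\Alb(M_\tau)$ is surjective.

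There are two small caveats:
\begin{enumerate}
\item $\mathcal H_{\mathcal W}$ can contain $(-2)$-classes. What rules out spherical-twist and Brill--Noether type walls is that every $\sigma_0$-stable object on an abelian surface has a Mukai vector of nonnegative square.
\item Your rule $(-\DD)\circ(-\DD)=\id$, like the paper's composition step, tacitly identifies the involutions produced at different isotropic walls. These are $-\DD$ conjugated by different transforms, so they generally act differently on $\mathbf v^\perp$. What the argument actually delivers is an (anti-)autoequivalence inducing the birational map, not a literal identity-or-$(-\DD)$ action on $\widetilde{\rH}$.
\end{enumerate}
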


\begin{proof}
The proof essentially follows from the wall-crossing analysis in \cite{MYY2}. 
It is shown in \cite[Theorem 5.4.1]{MYY2} that there exists a birational map
\[
f : M_{\sigma}(X,\alpha,\mathbf{v}) \dashrightarrow M_{\tau}(X,\alpha,\mathbf{v})
\]
between moduli spaces, together with a derived (anti-)autoequivalence 
$\Psi : \Db(X,\alpha) \to \Db(X,\alpha)$ such that
\[
f(E) = \Psi(E)
\]
for a general point $E \in K_{\sigma}(X,\alpha,\mathbf{v})$.

The autoequivalence $\Psi$ is a composition of elementary equivalences described in \cite[Proposition 5.1.5]{MYY2}. 
Specifically, when crossing a single wall $W_{w_1}$ from $\sigma$ to an adjacent chamber, where $w_1$ is an isotropic Mukai vector satisfying $\langle w_1,w_1\rangle = 0$ and $\langle \mathbf{v}, w_1\rangle = 1$ (note that in this case $\alpha = 0$), the corresponding elementary contravariant equivalence is given by
\[
\Db(X) \xrightarrow{\Psi_{\mathcal{E}}} \Db(X') \xrightarrow{\DD \circ \Psi_{\mathcal{E}^\vee}} \Db(X),
\]
where $X' = M_{(X,\alpha)}(w_1)$ is the fine moduli space of $\alpha$-twisted stable objects, $\mathcal{E}$ is the universal twisted sheaf on $X \times X'$, and $\Psi_{\mathcal{E}}$ denotes the Fourier--Mukai transform. 
Such an equivalence is an involution and acts as $-\DD$ on the twisted Mukai lattice. 
Since any two generic stability conditions $\sigma$ and $\tau$ can be connected by a path crossing only finitely many such walls, the overall equivalence $\Phi$ is a finite composition of these elementary equivalences.

Finally, the assertion that $\Psi$ induces a birational map between the Albanese kernels follows from \Cref{lem:adjust-albanese-fiber}. 
\end{proof}

\begin{lemma} \label{lem:adjust-albanese-fiber}
Let $\Psi$ be an (anti-)autoequivalence of $\Db(X,\alpha)$. There exists $\gamma \in A_{X,\alpha}$ such that the birational map induced by $\Phi_{\gamma} \circ \Psi$ preserves the Albanese fiber. Moreover, the cohomological action of $\Phi_{\gamma}\circ \Psi$ on the Mukai lattice is the same as that of $\Psi$.
\end{lemma}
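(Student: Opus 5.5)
The approach is to control how the functors $\Phi_\gamma$ move points of $M=M_\sigma(X,\alpha,\mathbf{v})$ along its Albanese variety. Write $M'=M_{\sigma'}(X,\alpha,\mathbf{v}')$ for the target moduli space, where $\mathbf{v}'=\Psi^{\widetilde{\rH}}(\mathbf{v})$ and $\sigma'=\Psi\cdot\sigma$, so that $\Psi$ induces a birational map $M\dashrightarrow M'$. Since the Albanese map is equivariant up to translation, this map induces an isomorphism $\Alb(M)\to\Alb(M')$ up to a translation. The image of $K_\sigma(X,\alpha,\mathbf{v})$ is therefore a fiber $\mathfrak{alb}^{-1}(c)$ for a well-defined point $c\in\Alb(M')$, and it suffices to find $\gamma$ with $\Phi_\gamma$ carrying $\mathfrak{alb}^{-1}(c)$ to $\mathfrak{alb}^{-1}(0)$.

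The first step is to identify the action of $A_{X,\alpha}$ on Albanese varieties. Each $\Phi_\gamma$ acts trivially on $\widetilde{\rH}(X,\alpha,\ZZ)$, by \Cref{prop:twisted-MP}. Hence it preserves $\mathbf{v}'$, and after noting that it fixes the stability condition up to the trivial action on central charges, it induces an automorphism of $M'$. We therefore get an action $A_{X,\alpha}\to\Aut(M')$ and, on Albanese varieties, a homomorphism
\[
\mu_{\mathbf{v}'}\colon A_{X,\alpha}\longrightarrow \Alb(M'),
\]
where $\gamma$ acts on $\Alb(M')$ by translation by $\mu_{\mathbf{v}'}(\gamma)$. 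In the untwisted case we have $\Alb(M')\cong X\times\widehat{X}$, and Yoshioka's formula $(\mathrm{codet},\det)$ computes $\mu_{\mathbf{v}'}$ explicitly. Translating by $u$ and tensoring with $\sP_\nu$ changes $(\mathrm{codet},\det)$ by a map of the form $(u,\nu)\mapsto\bigl(r u+\phi_{c_1}(\nu),\,\phi_{c_1}(u)+s\nu\bigr)$, with $\mathbf{v}'=(r,c_1,s)$. This is an isogeny of $X\times\widehat{X}$ whose degree is a power of $\langle\mathbf{v}',\mathbf{v}'\rangle$, and hence it is surjective. In the twisted case I would run the same computation through the equivariant model $\Db(X)_{G,\rho,a}$ and the description of $\Alb$ from \cite{JLL}. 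The conclusion should be that $\mu_{\mathbf{v}'}$ is again an isogeny of abelian varieties of the same dimension, with kernel finite (essentially $\Aut_0$-type torsion).

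The second step is immediate once $\mu_{\mathbf{v}'}$ is surjective. Since $\Alb(M')$ is a connected abelian variety, we may choose $\gamma$ with $\mu_{\mathbf{v}'}(\gamma)=-c$. Then $\Phi_\gamma\circ\Psi$ maps $K_\sigma(X,\alpha,\mathbf{v})$ birationally onto $\mathfrak{alb}^{-1}(0)=K_{\sigma'}(X,\alpha,\mathbf{v}')$. The case of an anti-autoequivalence is handled the same way: either compose with $\DD$, or observe directly that $\Phi_\gamma$ still acts on the target moduli space. The cohomological claim is formal. By \Cref{prop:twisted-MP}, every $\Phi_\gamma$ lies in $\Aut_0(\Db(X,\alpha))$ and acts as the identity on $\widetilde{\rH}(X,\alpha,\ZZ)$, so $(\Phi_\gamma\circ\Psi)^{\widetilde{\rH}}=\Psi^{\widetilde{\rH}}$.

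The main obstacle is the first step in the twisted setting: proving that the induced translation homomorphism $A_{X,\alpha}\to\Alb(M')$ is surjective. I would first reduce to the untwisted case. Using the étale cover $\mathbf{r}_X$ and the identification of $A_{X,\alpha}$ with a quotient of $X\times\widehat{X}$ by $\Gamma_\phi$, the map $\mu_{\mathbf{v}'}$ should factor through the untwisted isogeny on $X\times\widehat{X}$ pulled back along $\mathbf{r}_X$. Surjectivity then follows from the untwisted case together with the fact that the quotient map $X\times\widehat{X}\to A_{X,\alpha}$ is surjective. Failing that, a dimension count works: $\mu_{\mathbf{v}'}$ is a homomorphism between four-dimensional abelian varieties with finite kernel, because the subgroup of $A_{X,\alpha}$ fixing the Albanese fiber acts on $K$ through the finite group $\Aut_0(K)$. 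A homomorphism between abelian varieties of equal dimension with finite kernel is surjective.
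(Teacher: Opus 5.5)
Your reduction is the paper's. The birational map $M\dashrightarrow M'$ moves the Albanese fiber over $0$ to the fiber over some $c\in\Alb(M')$. The functors $\Phi_\gamma$ fix $\mathbf v'$ and the stability condition, so they act on $\Alb(M')$ by translations. One then picks $\gamma$ translating by $-c$, and the cohomological claim is formal from \Cref{prop:twisted-MP}. Your untwisted isogeny computation is also fine, up to conventions (the roles of $r$ and $s$ are interchanged).

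The gap is the one step with real content: surjectivity of $\mu_{\mathbf v'}$ in the twisted case.

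\begin{itemize}
\item Your first route is only a heuristic. It presupposes a description of $\Alb(M')$ that you take from the unavailable \cite{JLL}. Pulling back along $\mathbf r_X$ also gives no untwisted moduli space to compare with, since $\mathbf r_X^*F$ need not be stable and $\mathbf r_X^*\mathbf v$ is not primitive.
\item Your fallback dimension count has a logical hole. Knowing that $\ker\mu_{\mathbf v'}$ acts on $K$ through the finite group $\Aut_0(K)$ only shows $\ker\mu_{\mathbf v'}$ is finite modulo the subgroup fixing $K$ pointwise. You would still have to exclude a positive-dimensional subgroup of $A_{X,\alpha}$ fixing every point of $K$. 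That means showing the stabilizers $\{\gamma:\Phi_\gamma(F)\cong F\}$ are finite, i.e.\ that the orbit map $\gamma\mapsto\Phi_\gamma(F)$ has injective differential at the origin. This is exactly the computation you have not carried out for twisted objects.
\end{itemize}

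The paper closes this without needing to know $\Alb(M')$. Define
\[
\Sigma\colon M'\to X\times\widehat X,\qquad F\mapsto\bigl(\alb_X(Nc_2(F)-Nc_2(F_0)),\ \det F\otimes\det F_0^{-1}\bigr).
\]
Here $\alb_X$ is the Albanese (summation) map of $X$, and $N$ clears the fixed denominators of the twisted $c_2$. The map makes sense for twisted objects because the twisting cancels in both differences.

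By the universal property, $\Sigma=\Sigma'\circ\mathfrak{alb}$. With $o(\gamma)=\Phi_\gamma(F_0)$ the orbit map, this gives $\Sigma\circ o=\Sigma'\circ N\mu$.

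Now $\Ext^1(F,F)=\rH^1(X,\bR\mathcal{H}om(F,F))$, and $\bR\mathcal{H}om(F,F)$ is an untwisted complex. So the differential of $\Sigma\circ o$ at the origin is computed exactly as in the untwisted case, and it has rank $4$ by \cite[Lemma 4.3]{YoshiokaModuli}. Hence $\mu$ has finite kernel and is surjective onto the four-dimensional $\Alb(M')$.

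Your untwisted isogeny is this same computation written globally. What you are missing is that it can be run at the tangent level directly on $\det$ and $c_2$ of twisted objects, with no untwisted cover and no model of $\Alb(M')$.
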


\begin{proof}
Let $f \colon M_{\sigma}(X,\alpha,\bfv) \dashrightarrow M_{\tau}(X,\alpha,\bfv)$ be the birational map induced by $\Psi$. For simplicity, set $M_{\sigma} = M_{\sigma}(X,\alpha,\bfv)$ and $M_{\tau} = M_{\tau}(X,\alpha,\bfv)$. 
By functoriality of the Albanese morphism, there exists an isomorphism $\widetilde{f} \colon \Alb(M_{\sigma}) \to \Alb(M_{\tau})$ and a point $c \in \Alb(M_{\tau})$ such that
\[
\mathfrak{alb}_{\tau}(f(E)) = \widetilde{f} \bigl( \mathfrak{alb}_{\sigma}(E) \bigr) + c,
\]
on the domain of definition of $f$. In particular, $f$ maps the fiber $\mathfrak{alb}_{\sigma}^{-1}(0)$ birationally onto the fiber $\mathfrak{alb}_{\tau}^{-1}(c)$.

According to \cite[\S 10.1]{MarkmanKum}, we have a morphism defined by 
\[
\begin{aligned}
    T \colon X\times\widehat{X} &\longrightarrow \Alb(M_{\tau}) \\
    (u,\nu) &\longmapsto \mathfrak{alb}_{\tau}(\Phi_{(u,\nu)}\circ \Psi(E)) - \mathfrak{alb}_{\tau}(\Psi(E))
\end{aligned}
\]
where $E \in M_{\sigma}$ is a general point. This morphism is independent of the choice of $E$. 

\setcounter{claimnum}{0}
\claim
$\Phi_{\gamma}$ fixes every stability condition in $\Stab(X,\alpha)$, for every $\gamma=[(u,\nu)]\in A_{X,\alpha}$.

By \Cref{prop:twisted-MP}, $\Phi_{\gamma}$ acts trivially on the Mukai lattice for every $\gamma=[(u,\nu)]\in A_{X,\alpha}$; in particular, it fixes every Mukai vector. Hence it preserves the central charge of any stability condition. Since a stability condition on a twisted abelian surface is determined by its central charge (\cf \cite[Remark 1.5.8]{MYY2}), it follows that $\Phi_{\gamma}$ acts trivially on $\Stab(X,\alpha)$.

\claim
$T$ is surjective.

Since both $X \times \widehat{X}$ and $\Alb(M_{\tau})$ are abelian fourfolds, it is enough to show that $T$ is non-degenerate. 
Recall that there is a morphism
\[
\Sigma \colon M_{\tau} \longrightarrow X \times \widehat{X}, \quad F \longmapsto \bigl( \mathfrak{alb}_{\tau}(Nc_2(F)-Nc_2(F_0)), \det(F) \otimes \det(F_0)^{-1} \bigr),
\]
where $F_0 \coloneqq \Psi(E)$ is the reference point, and $N$ is a sufficiently large integer such that $Nc_2(F)$ is an integral class for every $F \in M_{\tau}$. Such a common $N$ exists as the possible denominator comes from the fixed twisting data of the twisted Chern character. By the universal property of Albanese variety, $\Sigma$ factors as $\Sigma' \circ \mathfrak{alb}_{\tau}$. On the other hand, we have an orbit morphism 
\[
o \colon X \times \widehat{X} \longrightarrow M_{\tau}, \quad (u,\nu) \longmapsto \Phi_{(u,\nu)}(\Psi(E)).
\]
Notice that $\Sigma \circ o = \Sigma' \circ NT$ by our construction. Hence it suffices to prove that $\Sigma \circ o$ is non-degenerate.
For any twisted sheaf $F \in M_{\tau}$, its tangent space is given by $T_F M_{\tau} = \Ext^1(F,F) = \rH^1(X,\bR\mathcal{H}om(F,F))$. Moreover, the endomorphism complex $\bR\mathcal{H}om(F,F)$ has trivial twisting, and descends to an ordinary complex on $X$. Therefore, the tangent map $d(\Sigma \circ o)_0$ is calculated the same way as in the untwisted case, which has rank $4$ by \cite[Lemma 4.3]{YoshiokaModuli}. This proves the non-degeneracy.
\smallskip

Choose $(u_0,\nu_0) \in X \times \widehat{X}$ such that $T(u_0,\nu_0) = -c$. Let $g \colon M_{\sigma} \dashrightarrow M_{\tau}$ be the birational map induced by $\Phi_{(u_0,\nu_0)} \circ \Psi$. Then $\widetilde{g} = \widetilde{f}$, because $\Phi_{(u_0,\nu_0)}$ acts on the Albanese variety by translation. We have
\[
\begin{aligned}
\mathfrak{alb}_{\tau}(g(E)) &= \mathfrak{alb}_{\tau}(\Phi_{(u_0,\nu_0)}\circ \Psi(E)) - \mathfrak{alb}_{\tau}(\Psi(E)) + \mathfrak{alb}_{\tau}(\Psi(E))\\
&= -c + \widetilde{f}(\mathfrak{alb}_{\sigma}(E)) + c \\
&= \widetilde{g}(\mathfrak{alb}_{\sigma}(E)).
\end{aligned}
\]
This completes the proof.
\end{proof}

\subsection{Lifting birational automorphisms to derived autoequivalences}

This subsection shows that every birational automorphism on a twisted modular $\mathrm{Kum}_n$-type variety is induced by a derived (anti-)autoequivalence. This is the main bridge from \Cref{thm:main} to \Cref{thm:main2}.

\begin{theorem} \label{thm:birational-to-equivalence}
Let $Y=K_\sigma(X,\alpha,\bfv)$. Every birational automorphism $f \in \Bir(Y)$ admits a lifting to an (anti-)autoequivalence
\[
\Psi \colon \Db(X,\alpha) \longrightarrow \Db(X,\alpha) \quad \text{or} \quad \Psi\colon \Db(X,\alpha) \longrightarrow \Db(X,\alpha)^{\rm op}
\]
in the sense that there exists an open subset $U \subset Y$ such that $f(E) = \Psi(E)$ for any $E \in U$.
\end{theorem}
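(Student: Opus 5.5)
Given $f\in\Bir(Y)$, the plan is to realize its cohomological action by a derived (anti-)autoequivalence via \Cref{thm:torelli-C}, and then correct the result by wall-crossing and numerically trivial automorphisms until it agrees with $f$ on an open set.

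\textbf{Step 1: extend $f^*$ to the Mukai lattice.} By \Cref{prop:bir-admissible}, $f^*$ extends to an admissible Hodge isometry $h$ of $\widetilde{\rH}(X,\alpha,\ZZ)$. Here $\rH^2(Y,\ZZ)$ is identified with $\bfv^\perp$ through $\theta_{\bfv}$. Since $h$ preserves $\bfv^\perp$, it sends $\bfv$ to $\pm\bfv$; composing with $-\id$ or with $-\DD$ if necessary, we arrange that $h(\bfv)=\bfv$. We must keep track of whether the resulting isometry is orientation-preserving or reversing, since this decides whether we get an equivalence or an anti-equivalence. \Cref{thm:derived-torelli} then gives a derived equivalence or anti-equivalence $\Psi_1$ with $\Psi_1^{\widetilde{\rH}}=h$. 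For the anti-equivalence case, we identify $\Db(X,\alpha)^{\rm op}$ with $\Db(X,-\alpha)$, and $h(\bfv)=\bfv$ is understood after this identification.

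\textbf{Step 2: send the moduli space to itself and compare with $f$.} Because $\Psi_1^{\widetilde{\rH}}$ fixes $\bfv$, $\Psi_1$ identifies $M_\sigma(X,\alpha,\bfv)$ with $M_{\Psi_1\sigma}(X,\alpha,\bfv)$. By \Cref{thm:wall-crossing-to-equivalence} there is a further (anti-)autoequivalence $\Psi_2$, acting on $\widetilde{\rH}$ by $\id$ or $-\DD$, inducing a birational map $M_{\Psi_1\sigma}\dashrightarrow M_\sigma$. By \Cref{lem:adjust-albanese-fiber}, after composing with some $\Phi_\gamma$, $\gamma\in A_{X,\alpha}$, the map preserves Albanese fibres. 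This yields a birational self-map $g$ of $Y$ induced by $\Psi_0=\Phi_\gamma\circ\Psi_2\circ\Psi_1$. The Mukai morphism $\theta_{\bfv}$ is functorial with respect to Fourier--Mukai transforms and universal families, so $g^*=f^*$ on $\rH^2(Y,\ZZ)$, possibly up to the sign correction introduced by $-\DD$. That correction must be absorbed in Step 1 by choosing the sign of $h$ appropriately.

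\textbf{Step 3: remove the numerically trivial discrepancy.} Now $f\circ g^{-1}$ is a birational automorphism acting trivially on $\rH^2$. For hyperkähler varieties a birational map acting trivially on $\rH^2$ preserves an ample class, so it is biregular and lies in $\Aut_0(Y)\cong(\ZZ/(n+1))^4\rtimes\ZZ/2$. Each generator of $\Aut_0(Y)$ is induced by an explicit autoequivalence: $E\mapsto t_a^*E\otimes\mathcal L$ for the translation part, and $(-1)^*$ for the involution. The twisted versions come from the $\Phi_\gamma$ and from $\bL\iota^*$ under the identification $\iota^*\alpha\simeq\alpha$. Composing these with $\Psi_0$ gives $\Psi$ with $f(E)=\Psi(E)$ on the open subset where all maps are defined.

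\textbf{Main obstacle.} I expect the hard part to be the sign and orientation bookkeeping in Steps 1 and 2. One must ensure that the extension of $f^*$ can be chosen with $h(\bfv)=\bfv$ exactly, rather than $-\bfv$, while keeping admissibility. One must also verify that composing with $-\DD$ or $[1]$ realizes the needed orientation type, so that \Cref{thm:derived-torelli} applies in the right variance. A secondary point is showing that $g^*=f^*$ holds on the nose, which needs a functoriality statement for $\theta_{\bfv}$ under (anti-)equivalences; I would prove this by applying Grothendieck--Riemann--Roch to the universal family composed with the kernel.
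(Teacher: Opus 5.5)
Your outline is the paper's: lift an extension of $f^*$ by \Cref{thm:derived-torelli}, wall-cross back to $\sigma$ with \Cref{thm:wall-crossing-to-equivalence}, fix the Albanese fibre with \Cref{lem:adjust-albanese-fiber}, and absorb the rest into $\Aut_0(Y)$. Step 3 is fine. The genuine gap is in Step 2, where you claim $g^*=f^*$ by functoriality of $\theta_{\bfv}$, up to a global sign from $-\DD$ that you absorb into the sign of $h$.

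The wall-crossing factor $\Psi_2$ does not act on $\bfv^\perp$ by $\pm\id$. Take $\bfv=(1,0,-(n+1))$. Then $-\DD\colon(r,c,s)\mapsto(-r,c,-s)$ restricts on $\bfv^\perp=\rH^2(X,\ZZ)\oplus\ZZ\delta$, with $\delta=(1,0,n+1)$, to the reflection in $\delta$, i.e.\ in the Hilbert--Chow exceptional divisor $2\delta$. Moreover, $E\mapsto\Psi_2(E)$ describes the birational map only on an open set $U$. So Grothendieck--Riemann--Roch for the composed universal family identifies $g^*\theta_{\bfv}(w)$ with $\theta_{\bfv}(\Psi_0^{\widetilde{\rH}}w)$ only after restriction to $U$, i.e.\ modulo divisor classes supported on $Y\setminus U$. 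What you actually obtain is that $g^*$ and $f^*$ differ by an element of the group $W_{\mathrm{Exc}}$ generated by reflections in prime exceptional classes. No choice of sign for $h$ removes this; a global $-\id$ on $\rH^2(Y,\ZZ)$ is impossible anyway, since it reverses the orientation of positive $3$-planes.

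The missing idea is the chamber argument the paper uses, via \cite[Theorem~6.18]{MarkmanMono}. Both $f$ and $g$ are birational automorphisms of $Y$, so $f^*$ and $g^*$ preserve the movable cone, i.e.\ the fundamental exceptional chamber. Since $W_{\mathrm{Exc}}$ acts simply transitively on exceptional chambers, the element of $W_{\mathrm{Exc}}$ relating them must be trivial. Hence $g^*=f^*$, and your Step 3 then applies.

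By contrast, the sign bookkeeping you single out as the main obstacle is easy. Since $\bfv$ is positive and $f^*$ preserves the orientation of positive $3$-planes in $\bfv^\perp$, $h(\bfv)=-\bfv$ exactly when $h$ is orientation-reversing. In that case lift $h$ to an anti-autoequivalence and compose with $[1]$; this is the same as lifting your $-h$, because $-\id$ preserves the orientation of a positive $4$-space. The alternatives of composing with $-\DD$ or passing to $\Db(X,-\alpha)$ should be dropped. $\DD$ does not send $\bfv$ to $\pm\bfv$ in general, and $-\alpha$ would put you on a different moduli space.
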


\begin{proof}
The proof is analogous to the case of $\mathrm{K3}^{[n]}$-type hyperk\"ahler varieties, but needs additional attention due to the existence of numerically trivial automorphisms.

The birational automorphism $f \in \Bir(Y)$ induces an isometry $f^* : \rH^2(Y,\mathbb Z) \to \rH^2(Y,\mathbb Z)$.  
By \Cref{prop:bir-admissible}, this isometry extends to an admissible Hodge isometry
\begin{equation}\label{eq:hodge}
    \widetilde{f}:\widetilde{\rH}(X,\alpha,\ZZ)\to \widetilde{\rH}(X,\alpha,\ZZ),
\end{equation}
It sends the vector $\bfv$ to $\pm \bfv$. According to \Cref{thm:torelli-C}, if $\tilde{f}(\bfv)=\bfv$, then $\widetilde f$ lifts to an autoequivalence $\Psi:\Db(X,\alpha)\rightarrow \Db(X,\alpha)$. 
If $\tilde{f}(\bfv)=-\bfv$, then $\widetilde f$ lifts to an anti-autoequivalence $\Psi:\Db(X,\alpha)\rightarrow \Db(X,\alpha)^{\rm op}$.  

The equivalence $\Psi$ (resp. $\Psi[1]$) induces an isomorphism $\psi: Y\rightarrow Y':=K_{\Psi(\sigma)}(X,\alpha,\bfv)$. Due to \Cref{thm:wall-crossing-to-equivalence}, there exists a derived (anti-)autoequivalence $\Psi'$ and a birational map
\[
Y'\dashrightarrow Y, \quad E \longmapsto \Psi'(E)
\]
defined over an open subset $U\subset Y'$, such that $\Psi'$ acts trivially on $\rH^2(Y,\ZZ)$.

Composing it with $Y\xrightarrow{\psi} Y'$, we obtain a birational automorphism
$$\varphi:Y \xlongrightarrow{\sim} Y'\dashrightarrow Y,$$ which sends $E$ to $\Phi(E)$ for $E$ lying in some open subset of $M$, where $\Phi=\Psi'\circ \Psi$ (resp. $\Phi=\Psi' \circ \Psi[1]$).
The actions of $f^*$ and $\varphi^*$ on $\rH^2(Y,\ZZ)$ differ by products of prime exceptional reflections by \cite[Theorem 6.18(5)]{MarkmanMono}. Since both maps land in the same chamber, $f^*$ and $\varphi^*$ coincide, and $\varphi$ and $f$ must differ by an element in $\Aut_0(Y)$. By the explicit description in \Cref{subsection:monodromy-and-numerical-trivial} and Claim 1 in the proof of \Cref{lem:adjust-albanese-fiber}, every element of $\Aut_0(Y)$ lifts to an autoequivalence of $\Db(X,\alpha)$ preserving both the stability condition and the Mukai vector. The assertion follows.
\end{proof}

We will also say that $f$ is induced by the derived (anti-)autoequivalence $\Psi$ in this setting.

\begin{remark}\label{rmk:symplectic-type}
Let $\varepsilon$ be defined by $\Psi^{\widetilde{\rH}}(\bfv)= \tilde{f}(\bfv) = \varepsilon\bfv$. As the proof of \Cref{thm:birational-to-equivalence} shows, the symplectic type of $\Psi$ is determined by that of $f$ and by $\varepsilon$:
\[
\begin{array}{c|cc}
 & f\ \mathrm{symplectic} & f\ \mathrm{anti\text{-}symplectic} \\ \hline
\varepsilon=1
& \mathrm{symplectic\ autoequivalence}
& \mathrm{anti\text{-}symplectic\ autoequivalence} \\
\varepsilon=-1
& \mathrm{anti\text{-}symplectic\ anti\text{-}autoequivalence}
& \mathrm{symplectic\ anti\text{-}autoequivalence}
\end{array}
\]
\end{remark}

\subsection{Proof of \Cref{thm:main2}: from derived actions to Bloch's conjecture}

The assertion follows from the following ingredients.

\begin{description}[font=\bfseries]

\item[$\bullet$ Lifting birational automorphisms to autoequivalences]
    According to \Cref{thm:birational-to-equivalence}, 
    $f$ can be lifted to an (anti-)autoequivalence
    \[
    \Psi \in \Aut\bigl(\Db(X,\alpha)\bigr) \quad\text{or}\quad \Psi \colon \Db(X,\alpha) \longrightarrow \Db(X,\alpha)^{\mathrm op}.
    \]

\item[$\bullet$ Action on $0$-cycles]
    For any closed point $E \in Y$ (viewed as a complex on $(X,\alpha)$), we have
    \[
    f_*(E) = \Psi(E) \quad \text{in } \CH_0(K).
    \]
    Recall that the action of $\Psi^{\CH}$ on $\CH_0(X)_{\mathrm{alb}}$ is given by
    \[
    \Psi^{\CH}\bigl(c_2(E)-c_2(E_0)\bigr) = c_2\bigl(\Psi(E)\bigr)-c_2\bigl(\Psi(E_0)\bigr).
    \]

\item[$\bullet$ A criterion of Marian-Zhao]
    A consequence of the main theorem in \cite{MarianZhao} is the following.
\end{description}

    \begin{theorem}\label{thm:Marian-Zhao}
    Let $Y = K_{\sigma}(X,\alpha,\mathbf{v})$ be a Bridgeland moduli space on $(X,\alpha)$.  If $E, F$ are two points in $Y$, then
    \[
    c_2(E) = c_2(F) \;\text{ in }\; \CH_0(X) \qquad\Longleftrightarrow\qquad
    \chow{E} = \chow{F} \;\text{ in }\; \CH_0(Y).
    \]
    \end{theorem}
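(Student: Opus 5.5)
The plan is to deduce this from Marian--Zhao's main theorem \cite{MarianZhao}, which is proved for Bridgeland moduli spaces $M_\sigma(X,\mathbf{v})$ on (untwisted) K3 and abelian surfaces. In the abelian-surface case it says that two points $E,F\in M_\sigma(X,\mathbf v)$ satisfy $\chow{E}=\chow{F}$ in $\CH_0(M_\sigma)$ if and only if $\ch(E)=\ch(F)$ in $\CH_*(X)$. The extension has two parts. First, we must pass from the whole moduli space $M_\sigma$ to the Albanese fiber $Y=K_\sigma(X,\alpha,\mathbf v)$. Second, we must remove the twist.

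\textbf{Removing the twist.} Fix a $\mathbf{B}$-field and use the twisted Chern character, so that $\ch_{X,\alpha}(E)$ lies in $\CH_*(X)$. The Marian--Zhao argument only needs a few ingredients: the decomposition of the diagonal of $M_\sigma$ in terms of the universal family, via the Mukai-type formula $\Delta_M=$ a polynomial in the Künneth components of $\ch(\bR\mathcal{H}om_{\pi}(\cE,\cE))$; Grothendieck--Riemann--Roch; and the fact that $\bR\mathcal Hom(E,F)$ is untwisted. All of these carry over verbatim to the $(1,1)$-twisted universal object on $X\times M_\sigma$. The reason is that the twists cancel in $\cE^\vee\otimes\cE$, so the relevant classes are honest cycles on $M\times M$. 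Alternatively, we can avoid redoing the argument: for fixed twist order $r$, pull back along the étale cover $\mathbf r_X$ (equivalently, use the equivariant model $\Db(X)_{G,\rho,a}$ of \Cref{subsection:equivariant-category}). This expresses the twisted moduli space in terms of $G$-equivariant untwisted objects, and $\mathbf r_X^*$ is injective on rational Chow groups. I will take the first route, with the second as a fallback.

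\textbf{Reducing to $c_2$.} For $E,F\in Y$ we have $v(E)=v(F)$, so $\ch_0,\ch_1$ agree after fixing the determinant (the Albanese fiber fixes $\det$), and equality of $\ch$ reduces to $c_2(E)=c_2(F)$ in $\CH_0(X)$.

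\textbf{Direction $\Leftarrow$.} Apply the twisted Marian--Zhao theorem to get $\chow{E}=\chow{F}$ in $\CH_0(M_\sigma)$. I then need to descend this to $\CH_0(Y)$, and I expect this to be the main obstacle. Here I will use the étale trivialization $M_\sigma\simeq (Y\times X\times\widehat X)/(\text{finite group})$. This comes from the action $(u,\nu)\mapsto\Phi_{(u,\nu)}$ of \Cref{lem:adjust-albanese-fiber}, whose orbit map is surjective onto $\Alb$ by Claim 2. With it, $\CH_0(Y)$ embeds into $\CH_0(M_\sigma)$ modulo the action of the finite group $(\ZZ/(n+1))^4\subset\Aut_0(Y)$, together with the $\CH_0(X\times\widehat X)$-factor.

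To finish the descent, run the Marian--Zhao correspondence argument directly on $Y\times Y$. The restriction of the diagonal formula to $Y\times Y$ holds as long as we restrict the universal family. Then $c_2(E)=c_2(F)$ gives $\Delta_{Y*}$ acting equally on $\chow E$ and $\chow F$. This avoids a separate descent step, and it is the approach I will attempt first.

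\textbf{Direction $\Rightarrow$.} The Chow action of $\ch(\cE|_{X\times Y})$, viewed as a correspondence $Y\to X$, sends $\chow{E}$ to $\ch(E)$. Hence equal point classes give equal $c_2$ immediately.
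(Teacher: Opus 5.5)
Your reduction to Marian--Zhao on the full moduli space $M=M_\sigma(X,\alpha,\mathbf v)$, your reduction to $c_2$, and your direction $\Rightarrow$ are fine. For $\Rightarrow$ you use the correspondence $\ch(\cE|_{X\times Y})$ directly, while the paper pushes forward along $i\colon Y\hookrightarrow M$ and applies Marian--Zhao on $M$; either works. The gap is the descent from $\CH_0(M)$ to $\CH_0(Y)$ for $\Leftarrow$, and the route you intend to try first does not work. Since $Y$ is a fibre of $\mathfrak{alb}$, its normal bundle $N_{Y/M}\cong\cO_Y^{\oplus 4}$ is trivial. Moreover, $\Delta_M\cap(Y\times Y)=\Delta_Y$ is an excess intersection of excess rank $4$, so
\[
(i\times i)^*[\Delta_M]=\Delta_{Y*}\,c_4(N_{Y/M})=0 \quad\text{in } \CH^*(Y\times Y).
\]
Restricting the universal family therefore turns any identity $[\Delta_M]=P(\cE)$ into $0=P(\cE)|_{Y\times Y}$, which says nothing about $\Delta_{Y*}$. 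Nor is a Markman-type formula available directly on $Y\times Y$. There the Ext complex $-\bR\mathcal{H}om_\pi(\cE,\cE)$ has rank $\langle\mathbf v,\mathbf v\rangle=\dim Y+2$, while $\Ext^1(E,E)$ has dimension $\dim M$. So the section cutting out $\Delta_Y$ is not regular, and the top-Chern-class description of the diagonal breaks down.

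The \'etale-cover route you list as a fallback is the paper's route, but your sketch stops exactly at the crux. Let $\pi\colon\Alb(M)\times Y\to M$ be the isogeny cover and $q$ the projection to $Y$. Then
\[
q_*\pi^*i_*\chow{E}=\sum_{g\in\Gamma}\chow{g\cdot E}\quad\text{in }\CH_0(Y),
\]
where $\Gamma$ is the finite Galois group, which acts on $Y$ through $\Aut_0(Y)$. Equivalently, $\ker(i_*)=\ker\bigl(\sum_{g\in\Gamma}g_*\bigr)$ on $\CH_0(Y)$. So $\chow{E}=\chow{F}$ in $\CH_0(M)$ only yields equality of $\Gamma$-orbit sums, which is your ``modulo the finite group''. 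To finish, you must show that $\Gamma$ fixes point classes in $\CH_0(Y)$. This is exactly what the paper's assertion encodes, namely that $q_*\pi^*$ sends $\chow{E}$ to a multiple of $\chow{E}$ (equivalently, that $i_*$ is injective on $\CH_0(Y)$). It does not follow from $c_2(g\cdot E)=c_2(E)$ plus Marian--Zhao on $M$, which only reproduces $\chow{g\cdot E}=\chow{E}$ in $\CH_0(M)$. It also cannot be imported from \Cref{cor:twisted-modular-sixfold}, since that corollary is itself deduced from this theorem. You need a separate argument on $Y$. For $Y=K_n(X)$, for example, $\Gamma$ acts through translations by $X[n+1]$, which are torsion translations of the abelian variety $X_0^{n+1}$. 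These act trivially on $\CH_0(X_0^{n+1})$ with $\QQ$-coefficients, hence on $\CH_0(K_n(X))=\CH_0(X_0^{n+1})^{\mathfrak S_{n+1}}$.
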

    \begin{proof}
    Set $M = M_{\sigma}(X,\alpha,\mathbf{v})$.  By the main result of \cite{MarianZhao}, we have the following equivalence
    \[
    c_2(E) = c_2(F) \text{ in } \CH_0(X) \quad\Longleftrightarrow\quad
    \chow{E} = \chow{F} \text{ in } \CH_0(M).
    \]
    There is an inclusion map $i \colon Y \hookrightarrow M$ and a diagram
    \[
    \begin{tikzcd}
    \Alb(M) \times Y \ar[r,"\pi"] \ar[d] & M \ar[d,"\operatorname{alb}"] \\
    \Alb(M) \ar[r,"\times (n+1)"] & \Alb(M)
    \end{tikzcd}
    \]
    For a point $E \in Y$ (viewed as a complex on $(X,\alpha)$), its image in $\CH_0(M)$ is denoted by $\chow{E}$.

    Using the pushforward of the inclusion map $i_* : \CH_0(Y) \hookrightarrow \CH_0(M)$, the forward direction is clear: $\chow{E} = \chow{F}$ in $\CH_0(Y)$ implies $c_2(E)=c_2(F)$ in $\CH_0(X)$.

    Conversely, suppose $c_2(E)=c_2(F)$ in $\CH_0(X)$.  Then we have $\chow{E} = \chow{F}$ in $\CH_0(M)$.  Consider the composition
    \[
    \CH_0(M) \xrightarrow{\pi^*} \CH_0(\Alb(M) \times Y) \xrightarrow{q_*} \CH_0(Y),
    \]
    where $q \colon \Alb(M) \times Y \to Y$ is the projection onto the second factor.  It sends each point $\chow{E} \in \CH_0(Y)$ to a multiple of $\chow{E}$.  It follows  that $\chow{E} = \chow{F}$ in $\CH_0(Y)$.
    \end{proof}

Now the proof of \Cref{thm:main2} is formal.

\begin{proof}[Proof of \Cref{thm:main2}]
Let $f\in\Bir(Y)$ be symplectic. By \Cref{thm:birational-to-equivalence}, $f$ is induced by an (anti-)autoequivalence $\Psi$ of $\Db(X,\alpha)$. Since $f$ is symplectic, $\Psi$ is a symplectic autoequivalence or an anti-symplectic anti-autoequivalence by \Cref{rmk:symplectic-type}. Hence \Cref{thm:main} implies that $\Psi$ acts trivially on $\CH_0(X)_{\mathrm{alb}}$. It follows from \Cref{thm:Marian-Zhao} that $f_*$ acts as the identity on $\CH_0(Y)$, which is the assertion of \Cref{thm:main2}.
\end{proof}

\section{Shen--Yin--Zhao filtrations and Bloch's conjecture for anti-symplectic maps}

In this section, we first construct a Shen--Yin--Zhao filtration on twisted modular $\mathrm{Kum}_n$-type varieties. We then explain why ``SYZ = Voisin'' (\Cref{conj:SYZ}) would imply the expected Bloch-type action for anti-symplectic maps (\Cref{conj:anti-symplectic}). Although this equality remains conjectural in general, we prove \Cref{conj:SYZ} for $K_3(X)$. Moreover, we obtain an unconditional result for \Cref{conj:anti-symplectic} over twisted modular sixfolds using Floccari's construction \cite{Fl24}.

\subsection{The Shen--Yin--Zhao type filtration from O'Grady's filtration}

We begin with the surface-level filtration from which our construction is inherited. Let $S$ be a K3 surface. O'Grady introduced in \cite{OG13} a set-theoretic filtration $\mathbf S_\bullet(S)$ on $\CH_0(S)$ by setting
\[
\mathbf{S}_i(S) = \bigcup_{\substack{\text{effective} \\ \deg(z)=i}} \Big\{ z + \mathbb{Q} \cdot \fro_S \Big\},
\]
where $\fro_S$ is the Beauville--Voisin class. 

For an abelian surface $X$, an O'Grady-type set-theoretic filtration is defined by  
\[
\mathbf{S}_i(X) := \mathbf{S}_i(\mathrm{Km}(X)) \subseteq \CH_0(X),
\]
where $\CH_0(\mathrm{Km}(X))$ is identified as a subset of $\CH_0(X)$ via the pullback of the rational map $\varpi: X \dashrightarrow \mathrm{Km}(X)$. The following simple observations hold:
\begin{itemize}
    \item $\mathbf{S}_0(X)$ consists of the multiples of $\{0\}$.
    \item Let $z \in \CH_0(X)_{(0)} \oplus \CH_0(X)_{(2)}$. Then $z \in \mathbf{S}_i(X)$ if and only if $\varpi_*(z) \in \mathbf{S}_i(\mathrm{Km}(X))$.
\end{itemize}
The verification is straightforward.

Now let $(X,\alpha)$ be a twisted abelian surface. Let
$(\km(X),\bar\alpha)$ be the associated twisted Kummer surface. Denote by
$\fro_{(\km(X),\bar\alpha)}$ the twisted Beauville--Voisin class introduced in
\cite[Definition 4.1]{CLZZ:2024}, and let
\[
 \fro_{(X,\alpha)}\in \CH_0(X)
\]
be its pullback under the Kummer correspondence. 

Denote by $c_2(E)_{(2)}$ the projection of $c_2(E)$ onto $\CH_0(X)_{(2)}$. One can see that 
\[
c_2(E)_{(2)} = \frac{1}{2}\bigl(c_2(E) + \iota^*c_2(E)\bigr) - k\chow{0},
\]
where $k = \deg c_2(E)$. The appearance of $c_2(E)_{(2)}$ is consistent with the Kummer-surface description, since cycles pulled back from the Kummer surface contribute only to the even Beauville components of $\CH_0(X)$, and in particular have no $(1)$-component.

\begin{definition}\label{def:SYZ-filtration}
We define a derived filtration on $\Db(X,\alpha)$:
\begin{equation}\label{eq:SYZ-filtration}
    \mathbf{S}_i(\Db(X,\alpha)) = \left\{ E \in \Db(X,\alpha) \;\middle|\; c_2(E)_{(2)} - \operatorname{rank}(E)\,\fro_{(X,\alpha)} \in \mathbf{S}_i(X) \right\}.
\end{equation}
Let $M = M_\sigma(X,\alpha,\mathbf{v})$ and let $K := K_\sigma(X,\alpha,\mathbf{v})$ denote the kernel of the Albanese map.
Then we define $\mathbf{S}^{\mathrm{SYZ}}_{\bullet} \CH_0(K)$ by restricting the filtration $\mathbf{S}_{\bullet}(\Db(X,\alpha))$ to $\CH_0(K)$:
\begin{equation}\label{eq:SYZ}
    \mathbf{S}^{\mathrm{SYZ}}_{i}\CH_0(K) = \left\langle E \in K \;\middle|\; E \in \mathbf{S}_i(\Db(X,\alpha)) \right\rangle.
\end{equation}
\end{definition}

It is natural to ask whether this filtration agrees with Voisin's filtration. This is the $\mathrm{Kum}_n$-type analogue of the equality between the Shen--Yin--Zhao and Voisin filtrations known in the $\mathrm{K3}^{[n]}$-type case. We propose the following conjecture.

\begin{conjecture}\label{conj:SYZ}
Set $\dim K = 2n$. For any point $E \in K$, we have
\[
\dim O_{E} \geq n - i \quad\Longleftrightarrow\quad c_2(E)_{(2)} - \operatorname{rank}(E)\,\fro_{(X,\alpha)} \in \mathbf{S}_i(X).
\]
Here $O_E$ denotes the rational equivalence orbit of the point $E \in K$, and $\dim O_E$ denotes the maximal dimension of a constant-cycle subvariety through $E$. In particular, $\mathbf{S}^{\rm SYZ}_\bullet\CH_0(K) = \mathbf{S}_\bullet\CH_0(K)$.
\end{conjecture}

\begin{remark}
Using the degeneration sequence for twisted sheaves as in \cite[Proposition 4.4, \S 5.5]{CLZZ:2024}, one can reduce the twisted case to the untwisted case.  
Thus \Cref{conj:SYZ} is expected to hold for all twisted abelian surfaces once the corresponding untwisted statement is known. We will give details in the upcoming paper \cite{JLL}.
\end{remark}

\subsection{Application of \Cref{conj:SYZ} towards Bloch's conjecture}

We first explain how the conjectural comparison would imply the desired Bloch-type statement. This step is essentially formal: once the SYZ filtration agrees with Voisin's filtration, the anti-symplectic sign rule can be checked on the Kummer model, where the filtration admits a standard splitting.

\begin{definition}
Recall that Voisin's filtration admits a standard splitting, with components given by (\cf \cite[Theorem 2.5]{VoisinHK})
\[
\CH_0(\km(X)^{[n]})_{i} = \left\langle (o,\ldots,o, \chow{z_1}-o,\ldots, \chow{z_i}-o) \mid z_1,\ldots, z_i \in X \right\rangle \subset \CH_0(\km(X)^{[n]}).
\]
Thus every zero-cycle $\xi \in \CH_0(\km(X)^{[n]})$ admits a decomposition
\[
 \xi=\sum_{i=0}^n \xi_i,
 \qquad \xi_i\in \CH_0\bigl(\km(X)^{[n]}\bigr)_i.
\]  
If $\xi$ is represented by the class $\chow{z}$ of a point $z\in \km(X)^{[n]}$, we simply write $\xi_i=\chow{z}_i$.
\end{definition}

Consequently, we obtain the following implication.

\begin{proposition} \label{prop:SYZ-give-anti}
Let $Y$ be a $\mathrm{Kum}_n$-type hyperk\"ahler variety. Suppose \Cref{conj:SYZ} holds. Then \Cref{conj:anti-symplectic} holds whenever $Y$ is twisted modular. 
\end{proposition}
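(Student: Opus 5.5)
The plan is to reduce \Cref{conj:anti-symplectic} to the surface statement \Cref{thm:main}, then use \Cref{conj:SYZ} to transport the filtration to the invariant $c_2(E)_{(2)}-\operatorname{rank}(E)\fro_{(X,\alpha)}$ on the surface. There it should be checked that negation acts by $(-1)^i$ on the $i$-th graded piece.

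\emph{Reduction to the moduli model.} Since $Y$ is twisted modular, it is birational to some $K=K_\sigma(X,\alpha,\bfv)$. For birational hyperk\"ahler varieties, the graph-closure correspondence identifies $\CH_0$ and preserves the dimensions of rational orbits. Hence $\bS_\bullet$ is transported, and an anti-symplectic $f\in\Bir(Y)$ becomes an anti-symplectic element of $\Bir(K)$. So we may assume $Y=K$.

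\emph{Lifting $f$.} By \Cref{thm:birational-to-equivalence}, $f$ is induced on an open set $U\subset K$ by an (anti-)autoequivalence $\Psi$. By \Cref{rmk:symplectic-type}, $\Psi$ is either an anti-symplectic autoequivalence or a symplectic anti-autoequivalence, so \Cref{thm:main} gives $\Psi^{\CH_0}=-\id$ on $\CH_0(X)_{\alb}$. For $E\in K$, set
\[
x_E:=c_2(E)_{(2)}-\operatorname{rank}(E)\,\fro_{(X,\alpha)}.
\]
I then want to show that $x_{\Psi(E)}=-x_E$ in the Albanese-kernel component, up to multiples of $\chow{0}$.

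The difference $x_{\Psi(E)}-x_{\Psi(E_0)}=-(x_E-x_{E_0})$ follows from $\Psi^{\CH_0}=-\id$ together with the formula $\Psi^{\CH}(c_2(E)-c_2(E_0))=c_2(\Psi E)-c_2(\Psi E_0)$. For the base point, I choose $E_0\in K$ with $x_{E_0}\in\bS_0(X)$. Such a point exists because $\bS_0\CH_0(K)\neq 0$: it is a maximal-dimensional constant-cycle subvariety, and \Cref{conj:SYZ} applies. Then I check that $x_{\Psi(E_0)}\in\bS_0(X)$. Descending $\Psi$ to the Kummer surface via \Cref{prop:descend-equivalence}, this reduces to the fact (from \cite{CLZZ:2024}, \Cref{prop:twisted-derived-preserves-BV}) that twisted derived equivalences of K3 surfaces preserve the twisted Beauville--Voisin class. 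The rank correction is handled because $\Psi^{\widetilde{\rH}}(\bfv)=\pm\bfv$. Since $f_*\chow{E}=\chow{\Psi(E)}$ on $U$, and every class in $\CH_0(K)$ is represented by points of $U$, \Cref{thm:Marian-Zhao} reduces everything to the effect of $x\mapsto -x$ on the invariant $x_E$.

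\emph{The sign on graded pieces.} This is the step I expect to be the main obstacle. The filtration $\bS_i(X)$ is not stable under negation naively. Indeed, $\iota^*$ acts trivially on $\CH_0(X)_{(2)}$, so negation is not geometric on $X$. I would therefore work on the Kummer side, using the standard splitting $\CH_0(\km(X)^{[m]})_i$ recalled above.

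By \Cref{conj:SYZ} and \Cref{thm:Marian-Zhao}, a point class $\chow{E}$ is determined by $x_E$, and $\chow{E}\in\bS_i\CH_0(K)$ exactly when $x_E\in\bS_i(X)$. Writing $\varpi_*x_E=\sum_j(\chow{z_j}-\fro)$ as a sum of at most $i$ terms, the class $\chow{E}$ should be a polynomial expression in the $(\chow{z_j}-\fro)$, obtained through the correspondence induced by the universal family. Its component in the $i$-th piece should be homogeneous of degree $i$ in these differences. This is the analogue of Voisin's splitting for $\km(X)^{[m]}$, pulled back along the correspondence $K\to\km(X)^{[m]}$ determined by $c_2$. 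Replacing $x_E$ by $-x_E$ multiplies each such degree-$i$ term by $(-1)^i$. Hence $f_*$ preserves $\bS_\bullet$ and acts by $(-1)^i$ on $\operatorname{gr}^{\bS}_i$.

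The delicate points are two. First, I must justify that the splitting is compatible with $x\mapsto -x$; I would try to express the degree-$i$ piece via Pontryagin-type products of $x_E$ on $X$, in the style of Beauville. Second, I must show that the choice of $\fro_{(X,\alpha)}$ makes the degree-$0$ piece invariant.
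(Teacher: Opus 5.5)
Your skeleton is the paper's. You lift $f$ by \Cref{thm:birational-to-equivalence}, get $\Psi^{\CH_0}=-\id$ on $\CH_0(X)_{\alb}$ from \Cref{rmk:symplectic-type} and \Cref{thm:main}, and use \Cref{conj:SYZ} to compare with Voisin's splitting on $\km(X)^{[n]}$. The gap is the step you flag yourself, and it is where the content of the proof lies.

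The heuristic that negating $x_E$ multiplies the degree-$i$ term by $(-1)^i$ does not make sense as stated. For $\xi=\{z_1,\dots,z_n\}$, the component $\chow{\xi}_i$ is the $i$-th elementary symmetric expression in the $w_j=\chow{z_j}-\fro$, taken in external products and padded by $\fro$. Replacing $x_E$ by $-x_E$ is not the same as replacing each $w_j$ by $-w_j$: the $-w_j$ need not be of the form $\chow{z'}-\fro$, and the decomposition is not unique. What you need is that $\chow{\xi}_i$ depends only on $\chow{\xi}_1$, i.e.\ on $c_2(\xi)-n\fro$, and is homogeneous of degree $i$ in it. This is exactly what the paper takes from \cite[Theorem 4.3(iii)]{LiYuZhang}. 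One way to see it is the Beauville--Voisin small-diagonal relation on the K3 surface $\km(X)$. Used as a correspondence from one factor to the other two, it gives $(\chow{z}-\fro)\times(\chow{z}-\fro)=0$ in $\CH_0(\km(X)^2)$. Hence all terms with a repeated index vanish, and $\chow{\xi}_i$ is, up to the constant $1/i!$, the image of $\fro^{\times(n-i)}\times(c_2(\xi)-n\fro)^{\times i}$ in $\CH_0(\km(X)^{(n)})$. Your proposed substitute, Pontryagin products of $x_E$ on $X$, cannot supply this, since $\CH_0(X)_{(2)}*\CH_0(X)_{(2)}\subseteq\CH_0(X)_{(4)}=0$.

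You also need a correspondence going from $\km(X)^{[n]}$ back to $K$. The universal family only gives $\CH_0(K)\to\CH_0(X)$, so it does not express $\chow{E}$ through the $\chow{z_j}$. The paper takes the incidence variety $R\subset Y\times\km(X)^{[n]}$ defined by $c'(E)=\chow{\operatorname{supp}\xi}+k\,\fro_X$, and a component $R_0$ dominating both factors (via \Cref{conj:SYZ} for $i=n$ and O'Grady's argument \cite{OG13}). This gives the isomorphism $\eta$ carrying Voisin's splitting to $\CH_0(Y)$. With $\eta$ and the homogeneity above, both of your delicate points are settled; the degree-zero one reduces to your base-point normalization.

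That normalization is a genuine point, which the paper passes over when it goes straight from $\Psi^{\CH_0}=-\id$ to $\chow{x'}_1=-\chow{x}_1$. Your justification does not work as cited, however. \Cref{prop:twisted-derived-preserves-BV} only controls Albanese kernels, and the paper notes that preservation of $R(S)$ by $\Psi^{\CH_*}$ is open for twisted K3 surfaces. You would need the modified preservation statement of \cite{CLZZ:2024}, made compatible with $\fro_{(X,\alpha)}$ through the Kummer descent.
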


\begin{proof}
Suppose every object in $Y$ has rank $r$. Define $c'(E) := c_2(E)_{(2)} - r\,\fro_{(X,\alpha)}$. By \Cref{conj:SYZ}, we have
\[
c'(E) \in \mathbf{S}_n(\mathrm{Km}(X)),
\]
via the identification $\CH_0(\mathrm{Km}(X)) = \CH_0(X)_{(0)} \oplus \CH_0(X)_{(2)}$.

Consider the incidence variety
\[
R = \Big\{\, (E,\xi) \mid c'(E) = \chow{\operatorname{supp}\xi} + k \cdot \mathfrak{o}_X \in \CH_0(\km(X)) \,\Big\} \subset Y \times \km(X)^{[n]}.
\]
By \Cref{conj:SYZ} and an argument analogous to that of \cite[Section 2.2]{SYZ20}, there exists an irreducible component $R_0$ of $R$ that dominates both factors. More precisely, the case $i=n$ of \Cref{conj:SYZ} implies that the projection $R \to \km(X)^{[n]}$ is dominant. On the other hand, the argument of \cite[Proposition 2.2]{OG13} shows that the projection $R \to Y$ is also dominant. Hence one may choose an irreducible component $R_0 \subset R$ that dominates both $\km(X)^{[n]}$ and $Y$.
The correspondence $R_0$ induces an isomorphism
\[
\eta: \CH_0(Y) \stackrel{\cong}{\longrightarrow} \CH_0(\km(X)^{[n]}),
\]
which, by \Cref{conj:SYZ}, yields a splitting of Voisin's filtration on $\CH_0(Y)$.

Now let $f \in \Bir(Y)$ be an anti-symplectic birational automorphism. Take a general point $y \in Y$. By the surjectivity of $R_0$, we can find points $x, x' \in \km(X)^{[n]}$ such that
\[
\chow{x} = \eta(\chow{y}), \qquad \chow{x'} = \eta(\chow{f(y)}) \quad \text{in } \CH_0(\km(X)^{[n]}).
\]
By the definition of $R$, the following diagram commutes:
\[
\begin{tikzcd}[row sep=large, column sep=large]
\CH_0(Y) \arrow[r, "\eta"] \arrow[d, "c'"] & \CH_0(\km(X)^{[n]}) \arrow[d, "c_2"] \\
\CH_0(X) \arrow[d] & \CH_0(\km(X)) \arrow[d] \\
\CH_0(X)_{\mathrm{alb}} \arrow[r] & \CH_0(\km(X))_{\mathrm{alb}}
\end{tikzcd}
\]
Since $f$ is anti-symplectic, its lift to a derived (anti-)equivalence acts as $-\operatorname{id}$ on the Albanese kernel by \Cref{rmk:symplectic-type} and \Cref{thm:main}. It follows that $\chow{x'}_1 = -\chow{x}_1$ in the splitting decomposition. Applying \cite[Theorem 4.3(iii)]{LiYuZhang} then yields
\[
\chow{f(y)}_i = (-1)^i \chow{y}_i \quad \text{in } \CH_0(Y)
\]
for all $i$. This is exactly the statement of \Cref{conj:anti-symplectic} for anti-symplectic birational automorphisms.
\end{proof}

\subsection{Filtrations on $\mathrm{Kum}_n(X)$}

We now turn from the general conjectural framework to an explicit test case. For generalized Kummer varieties, points admit a concrete description as unordered tuples on the abelian surface with sum zero. Thus \Cref{conj:SYZ} admits a very explicit geometric reinterpretation.

\begin{conjecture}
For points $z_m \in X$ with $0 \le m \le n$, set $z = \{z_0,\ldots,z_n\} \in K_n(X)$. Then $\dim O_z \ge n - i$ if and only if there exist points $a_1,\dots,a_i \in X$ and an integer $k$ such that
\[
\sum_{m=0}^n \chow{z_m} = \sum_{j=1}^i \bigl(\chow{a_j} + \chow{-a_j}\bigr) + k \chow{0} \in \CH_0(X).
\]
In particular, for any $z \in X_0^{n+1}$, we always have
\[
\sum_{m=0}^n \chow{z_m} = \sum_{j=1}^n \bigl(\chow{a_j} + \chow{-a_j}\bigr) + k\chow{0} \in \CH_0(X)
\]
for some $a_1,\dots,a_n \in X$ and $k \in \mathbb{Z}$.
\end{conjecture}

In the following, we confirm this conjecture when $n = 3$. 
For $K_3(X)$, there is a natural rational map to the Hilbert scheme of the associated Kummer surface. On the open locus of $K_3(X)$ parametrizing reduced subschemes
\[
z=\{a_0,a_1,a_2,a_3\},\qquad \sum_{i=0}^3 a_i=0,
\]
with the relevant points lying in the domain of the rational map $\varpi:X\dashrightarrow\km(X)$, define
\[
\rho\colon K_3(X)\dashrightarrow \operatorname{Km}(X)^{[3]}
\]
by
\begin{equation} \label{eq:kummer-sym3-map}
\rho(z)=\Big\{\varpi(a_0+a_1), \ \varpi(a_0+a_2), \ \varpi(a_0+a_3)\Big\}.
\end{equation}
Changing the distinguished point $a_0$ only changes the three points by signs and permutation, and hence gives the same unordered triple in $X/\pm1$. Thus $\rho$ is well defined on a dense open subset and extends to a rational map
\[
K_3(X)\dashrightarrow \operatorname{Km}(X)^{[3]}.
\]
This is the map used below to compare the rational-orbit filtration on $K_3(X)$ with O'Grady's filtration on the Kummer surface.
Now we are ready to derive the following result.

\begin{theorem}\label{thm:SYZ-sixfold}
If $K = K_3(X)$, then $\mathbf{S}_\bullet^{\rm SYZ} \CH_0(K) = \mathbf{S}_\bullet \CH_0(K)$.
\end{theorem}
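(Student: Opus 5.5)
The plan is to reduce both filtrations on $K=K_3(X)$ to O'Grady's filtration on $\km(X)$ via the rational map $\rho\colon K_3(X)\dashrightarrow \km(X)^{[3]}$ of \cref{eq:kummer-sym3-map}. In this untwisted case, with $\fro_{(X,0)}$ a multiple of $\chow{0}$, the SYZ condition for $z=\{a_0,\dots,a_3\}$ says that $\bigl(\sum_m\chow{a_m}\bigr)_{(2)}$ lies in $\mathbf S_i(X)$ modulo $\QQ\chow{0}$.

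\emph{Step 1 (the key cycle identity).} Using Beauville's decomposition and the relation $\sum a_m=0$, I will check in $\CH_0(X)_{(2)}$ that
\[
\textstyle\sum_{m}\chow{a_m}_{(2)} \;=\; \tfrac12\sum_{j=1}^3\bigl(\chow{a_0+a_j}+\chow{-a_0-a_j}\bigr)_{(2)} .
\]
The $(2)$-part of $\chow{x}$ is $\tfrac12(\chow{x}-\chow{0})^{*2}$, where $*$ is the Pontryagin product. The identity therefore becomes a quadratic identity in the classes $x_m=\chow{a_m}-\chow{0}$ modulo $(3)$-parts, which vanish on a surface. It follows from bilinearity together with $\sum_m x_m\equiv 0$ modulo $*$-squares, since the Albanese sum vanishes. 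Concretely, $\sum_j(x_0+x_j)^{*2}=3x_0^{*2}+2x_0*\sum_{j\ge1}x_j+\sum_{j\ge1}x_j^{*2}=\sum_m x_m^{*2}$, using $x_0*\sum_{j\ge1}x_j=-x_0^{*2}$. Hence $c'(z)$ is (up to $\QQ\chow{0}$) the pullback along $\varpi$ of $\sum_j\chow{\rho(z)_j}\in\CH_0(\km(X))$. As a consequence, $z\in\mathbf S_i^{\rm SYZ}$ if and only if $\rho(z)$ lies in the SYZ/O'Grady stratum $\mathbf S_i$ of $\km(X)^{[3]}$. By Voisin's theorem \cite{Voisin15} that stratum coincides with $\{\dim O_{\rho(z)}\ge 3-i\}$.

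\emph{Step 2 (comparing orbits).} Next I show that $\rho$ is generically finite and dominant, which follows from a dimension count: both sides are sixfolds and $\rho$ has finite fibres over the reduced locus. I then show that $\rho$ identifies orbit dimensions, $\dim O_z=\dim O_{\rho(z)}$. For ``$\ge$'', I take a constant cycle subvariety through $\rho(z)$ in $\km(X)^{[3]}$. Its preimage under $\rho$ has the same dimension, and by Step 1 the points in it have the same $c'$. They therefore have the same class in $\CH_0(K)$ by \Cref{thm:Marian-Zhao}, applied to $K_3(X)$ viewed as an Albanese fiber of a moduli space of ideal sheaves (with $c_2=\sum\chow{a_m}$). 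For ``$\le$'', a constant cycle subvariety $W\ni z$ in $K$ has constant $\sum\chow{a_m}$. By Step 1, $\rho(W)$ then has constant $c_2$ on $\km(X)$, and hence, by Marian--Zhao for $\km(X)^{[3]}$, $\rho(W)$ is constant cycle of the same dimension.

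\emph{Step 3 (closing the argument).} Combining Steps 1 and 2 gives the pointwise equivalence of \Cref{conj:SYZ} for $z$ in the open set where $\rho$ is defined and finite. For points in the indeterminacy locus (nonreduced subschemes, or points meeting $X[2]$ after summation), I will argue by specialization. Both conditions are closed under specialization of the stratification: orbit dimension is upper semicontinuous in the Voisin sense, and the SYZ condition is given by a countable union of closed subsets. Alternatively, I will work directly with the Hilbert–Chow image, where $\rho$ extends to a correspondence. Taking spans then gives the equality of filtrations.

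The main obstacle is Step 1 together with the boundary behaviour in Step 3. The identity must hold in $\CH_0$ and not only in cohomology, and it needs the fact that the $(1)$-parts cancel exactly because $\sum a_m=0$. The specialization argument for degenerate points is the most delicate part. There I would first try to extend $\rho$ to a generically finite correspondence $\Gamma\subset K\times\km(X)^{[3]}$ and redo Step 2 with $\Gamma$ in place of $\rho$.
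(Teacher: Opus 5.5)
Your Steps 1 and 2 follow the paper's proof. You use the same map $\rho$ and the same identity $c_2(z)=\chow{0}+\tfrac12\sum_{i<j}\chow{a_i+a_j}$. You derive its $(2)$-part from $\chow{x}_{(2)}=\tfrac12(\chow{x}-\chow{0})^{*2}$ and the vanishing of triple Pontryagin products, where the paper uses Bloch's alternating-sum relation plus Lin's lemma. The $(2)$-part is enough because $\sum a_m=0$ kills the $(1)$-part. You then apply Marian--Zhao on $K_3(X)$ and on $\km(X)^{[3]}$ to get $\dim O_z=\dim O_{\rho(z)}$, and appeal to Voisin's theorem on $\km(X)^{[3]}$, exactly as the paper does.

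One small correction: the identity exhibits $c'(z)$ as $\tfrac12$ of the pullback of $c_2(\rho(z))$, not as the pullback itself. So your ``if and only if'' also needs O'Grady's strata on $\km(X)$ to be stable under rescaling the degree-zero part by $2^{\pm1}$. This is true (e.g.\ via elliptic curves on the K3 surface $\km(X)$), and the paper uses it tacitly as well.

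The step that does not work as written is Step 3. Suppose both loci are countable unions of closed subsets and agree on the domain $U$ of $\rho$. That only controls boundary points that are limits of points of $U$ \emph{already} satisfying the condition. A point $z_0\in K\setminus U$ lying on a component of one locus that is entirely contained in $K\setminus U$ is never reached. So ``closed under specialization'' gives no information there; the argument runs in the wrong direction.

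What you need is a moving statement. By Marian--Zhao, both conditions depend only on the class $\chow{z}$, so both loci are unions of full orbits. It therefore suffices to show that every orbit $O_{z_0}$ with $z_0\notin U$ meets $U$. One way is to replace a point of $\rho(z_0)$ lying on an exceptional curve, or a repeated point, by rationally equivalent general points, using rational and elliptic curves on $\km(X)$, and then lift back via the explicit inverse of $\rho$. Alternatively, carry out your other suggestion: extend $\rho$ to a correspondence and rerun Step 2. The paper itself only treats $z$ in the domain of $\rho$ and leaves this boundary issue implicit. So you are not missing an ingredient the paper supplies, but your proposed fix is not valid as stated.
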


\begin{proof}
\setcounter{proofstep}{0}

Let $z = \{a_0, a_1, a_2, a_3\} \in K_3(X)$ be a point lying in the domain of the rational map 
$\rho: K_3(X) \dashrightarrow \Km(X)^{[3]}$, where $a_i \in X$ and $\sum_{i=0}^3 a_i = 0$. 
Its Chern class is given by 
\[
c_2(z) = \sum_{i=0}^3 \chow{a_i} \in \CH_0(X).
\]
There is an alternating sum relation given by
\[
\sum_{I \subseteq \{0,\dots,3\}} (-1)^{|I|}\ \chow{ \sum_{i \in I} a_i } = 0 \quad \text{in } \CH_0(X),
\]
because the left-hand side lies in the 4th Pontryagin power of $\CH_0(X)_{\ge 1}$, which is zero \cite[Theorem 0.1]{BlochAbelian}. Expanding yields
\begin{equation}\label{eq:6-key}
\chow{0} - \sum_i \chow{a_i} + \sum_{i<j} \chow{a_i + a_j} - \sum_{i<j<k} \chow{a_i + a_j + a_k} + \chow{0} = 0.
\end{equation}
Observe that, by \cite[Lemma 2.1]{LinCorrigendum},
\[
\sum_{i<j<k} \bigl( \chow{a_i + a_j + a_k} - \chow{0} \bigr) = \sum_{i=0}^3 \bigl( \chow{-a_i} - \chow{0} \bigr) = \sum_{i=0}^3 \bigl( \chow{a_i} - \chow{0} \bigr).
\]
Substituting this into \eqref{eq:6-key}, we obtain the fundamental relation
\begin{equation}\label{eq:alt-sum}
c_2(z)=\sum_{i=0}^3 \chow{a_i} = \frac{1}{2} \sum_{i<j} \chow{a_i + a_j} + \chow{0} \quad \text{in } \CH_0(X).
\end{equation}

Next, we claim that $\dim O_z = \dim O_{\rho(z)}$. This follows easily from the facts below:
\begin{itemize}
    \item[(i)] Maximal dimensional irreducible components of $O_{\rho(z)}$ are Zariski dense in $\Km(X)^{[3]}$ (see \cite[Lemma 20]{Voisin15}).
    \item[(ii)] $\rho(O_z) \subseteq O_{\rho(z)}$ and $\rho^{-1}(O_{\rho(z)}) \subseteq O_z$.
\end{itemize}
To verify (ii), we use the Marian--Zhao theorem \cite[Theorem 4.3(i)]{LiYuZhang}: for any $z' = \{b_0,\dots,b_3\}\in \mathrm{Kum}_3(X)$, we have $z' \in O_z$ if and only if $c_2(z) = c_2(z')$; similarly, for $y\in \Km(X)^{[3]}$, we have $y \in O_{\rho(z)}$ if and only if $c_2(y) = c_2(\rho(z))$.
The Chern class of $\rho(z)$ is 
\[
c_2(\rho(z)) = \chow{\varpi(a_0 + a_1)} + \chow{\varpi(a_0 + a_2)} + \chow{\varpi(a_0 + a_3)}.
\] 
From \eqref{eq:alt-sum}, we see that $c_2(z) = c_2(z')$ if and only if
\begin{equation}\label{eq:alt-sum-c2}
   \sum_{i<j} \chow{a_i + a_j} = \sum_{i<j} \chow{b_i + b_j}.
 \end{equation}
Applying the isomorphism $\varpi_*$ gives 
\[
\sum_{i<j} \chow{\varpi(a_i + a_j)} = \sum_{i<j} \chow{\varpi(b_i + b_j)},
\]
i.e. $c_2(\rho(z)) = c_2(\rho(z'))$. Hence $\rho$ preserves the rational equivalence relation, establishing (ii).

Now assume $\dim O_z \ge 3 - i$. Then by the claim, $\dim O_{\rho(z)} \ge 3 - i$. As Voisin's filtration agrees with Shen--Yin--Zhao's filtration on $\CH_0(\km(X)^{[3]})$ in the strong sense (see \cite[Theorem 9]{Voisin15}), we obtain
\[
c_2(\rho(z)) = \sum_{i<j} \chow{\varpi(a_i + a_j)} \in \mathbf{S}_i(\Km(X)).
\] 
From the definition of $\mathbf{S}_i(X)$, it follows that
\[
\sum_{i<j} \chow{a_i + a_j} \in \mathbf{S}_i(X),
\]
which yields $c_2(z) \in \mathbf{S}_i(X)$. Hence $\mathbf{S}_\bullet \CH_0(K) \subseteq \mathbf{S}_\bullet^{\rm SYZ} \CH_0(K)$.

The reverse inclusion follows by a symmetric computation, using the same alternating sum relation in the opposite direction. Therefore
\[
\mathbf{S}_\bullet^{\rm SYZ} \CH_0(K) = \mathbf{S}_\bullet \CH_0(K). \qedhere
\]
\end{proof}

\begin{remark}
The preceding calculation is compatible with Lin’s splitting of Voisin’s filtration on generalized Kummer varieties \cite{Lin16}, which gives another way to view the equality in \Cref{thm:SYZ-sixfold}. Recall the following diagram:
\[
\begin{tikzcd}
& X_0^{n+1} \arrow[r] \arrow[d] & X^{n+1} \arrow[r, "\sum"] \arrow[d, "q"] & X \arrow[d] \\
K_n(X) \arrow[r] & K_{(n)}(X) \arrow[r] & X^{(n+1)} \arrow[r, "\sum"] & X
\end{tikzcd}
\]
Here $X_0^{n+1} := \ker\bigl( \sum: X^{n+1} \to X \bigr)$ is the kernel of the summation map, $K_{(n)}(X) = X_0^{n+1} / \mathfrak{S}_{n+1}$ is the singular generalized Kummer model. Since $K_n(X)$ is a crepant resolution of $K_{(n)}(X)$, there is an isomorphism
\[
\CH_0(K_n(X)) \xrightarrow{\sim} \CH_0(K_{(n)}(X)) = \CH_0(X_0^{n+1})^{\mathfrak{S}_{n+1}}.
\]
As the $\mathfrak{S}_{n+1}$-action on $\CH_0(X_0^{n+1})$ is compatible with the Beauville decomposition on $\CH_0(X_0^{n+1})$, there is an induced decomposition $\bigoplus_{0 \le s \le 2n} \CH_0(K_n(X))_s$, giving a split filtration. Lin proves that Voisin's filtration coincides with this split filtration, namely (cf. \cite[Theorem 1.5]{Lin16})
\[
\mathbf{S}_i \CH_0(K_n(X)) = \bigoplus_{2s \le 2n - 2i} \CH_0(K_n(X))_{2s}.
\] 
\end{remark}

\subsection{Floccari's construction for sixfolds}
In this subsection, we use an explicit construction to prove \Cref{conj:anti-symplectic} for twisted modular sixfolds. In fact, the proof strategy is valid for all hyperk\"ahler sixfolds of $\mathrm{Kum}_3$-type for which $G_0$ acts trivially on $\CH_0$. This construction is independent of \Cref{conj:SYZ}.

Recall that a key feature established in \cite{Fl24} associates a $\mathrm{K3}^{[3]}$-type variety to any $\mathrm{Kum}_3$-type sixfold.
To be precise, let $K$ be a smooth projective hyperkähler sixfold of $\mathrm{Kum}_3$-type, and let
\[
G_0 \cong (\mathbb{Z}/2\mathbb{Z})^5 \subset \operatorname{Aut}_0(K)
\]
be the finite subgroup of $\Aut_0(K)$, generated by the automorphisms whose fixed locus contains a 4-dimensional
component \cite[Lemma 2.4]{Fl24}. 
It consists of all involutions in $\operatorname{Aut}_0(K)$, and is a normal subgroup of $\operatorname{Bir}(K)$ since any conjugation maps an involution to an involution. The quotient $K/G_0$ is singular, but it admits a crepant resolution
\[
\pi: Y_K \longrightarrow K/G_0,
\]
where $Y_K$ is a hyperkähler manifold of $\mathrm{K3}^{[3]}$-type.
Moreover, when $K$ is projective, there exists a uniquely determined K3 surface $S_K$ such that $Y_K$ is birational to a moduli space of stable sheaves on $S_K$.

\begin{remark}
When $K = K_3(X)$, the birational correspondence constructed above is compatible with the  map $\rho: K_3(X) \dashrightarrow \Km(X)^{[3]}$ given in \Cref{eq:kummer-sym3-map}. Floccari introduces a stratification $(K/G_0)^j$ on $K/G_0$, whose local model is given by (cf. \cite[Proposition 4.2]{Fl24})
\[
\bigl(\operatorname{Bl}_0(\mathbb C^2/\iota)\bigr)^j \times (\mathbb C^2)^{3-j}, \quad 0 \le j \le 3.
\]
This is precisely the local model obtained from the product of the minimal resolution $\Km(X) \to X/\pm1$. Thus the birational correspondence
\[
\rho: Y_K \dashrightarrow \Km(X)^{[3]}
\]
is locally identified over each stratum of $(X/\pm1)^{(3)}$. These local identifications are compatible on overlaps and hence glue to a global isomorphism over $(X/\pm1)^{(3)}$, recovering the map $\rho$ from \Cref{eq:kummer-sym3-map}.
\end{remark}

The construction above gives a sequence of maps
\[
\CH_0(Y_K) \xlongrightarrow[\sim]{\pi_*} \CH_0(K/G_0) \xlongrightarrow[\sim]{q^*} \CH_0(K)^{G_0} \longleftarrow \CH_0(K).
\]
Here $q^*$ identifies $\CH_0(K/G_0)$ with the $G_0$-invariant part $\CH_0(K)^{G_0}$. 

The following theorem generalizes the equality $\dim O_z = \dim O_{\rho(z)}$ established in \Cref{thm:SYZ-sixfold} from $K = K_3(X)$ to any $\mathrm{Kum}_3$-type hyperkähler manifold on which $G_0$ acts trivially on the Chow group.

\begin{theorem}\label{thm:rational-orbit-sixfolds}
Let $K$ be a $\mathrm{Kum}_3$-type hyperk\"ahler manifold. If $G_0$ acts trivially on $\CH_0(K)$, then we have an isomorphism
\[
\CH_0(Y_K) \xrightarrow{\cong} \CH_0(K)
\]
which induces an isomorphism between Voisin's filtrations
\[
\mathbf{S}_\bullet \CH_0(K) \cong \mathbf{S}_\bullet \CH_0(Y_K).
\]
\end{theorem}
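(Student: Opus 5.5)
The isomorphism itself is formal. Since $G_0$ is finite and acts trivially on $\CH_0(K)$ (rational coefficients), we have $\CH_0(K)^{G_0}=\CH_0(K)$. Moreover $q^*\colon \CH_0(K/G_0)\to \CH_0(K)^{G_0}$ is an isomorphism, with inverse $\frac{1}{|G_0|}q_*$. For the crepant resolution $\pi\colon Y_K\to K/G_0$, the map $\pi_*$ is an isomorphism on $\CH_0$: it is surjective on points. For injectivity, the fibers of $\pi$ are rationally chain connected, since crepant resolutions of quotient (symplectic) singularities have rationally chain connected fibers; hence all points in a fiber are rationally equivalent. We therefore take
\[
\Theta\coloneqq q^*\circ\pi_*\colon \CH_0(Y_K)\xrightarrow{\ \cong\ }\CH_0(K),
\qquad \Theta(\chow{y})=\tfrac{1}{|G_0|}\textstyle\sum_{g\in G_0}\chow{g\tilde y}=\chow{\tilde y},
\]
where $\tilde y\in K$ is any lift of $\pi(y)$. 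The last equality uses the triviality of the $G_0$-action again.

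To compare Voisin's filtrations it suffices to show $\dim O_y=\dim O_{\tilde y}$ for every $y\in Y_K$, with lift $\tilde y$ as above. Here orbits are taken in $Y_K$ and $K$ respectively. The filtrations are generated by point classes, and $\Theta$ sends point classes to point classes bijectively up to the fiber identification. By the description of $\Theta$, for $y,y'\in Y_K$ we have $\chow{y}=\chow{y'}$ if and only if $\chow{\tilde y}=\chow{\tilde y'}$ in $\CH_0(K)$. This gives the key set-theoretic statement
\[
O_y=\pi^{-1}\bigl(q(O_{\tilde y})\bigr).
\]
This is the analogue of fact (ii) in the proof of \Cref{thm:SYZ-sixfold}, now obtained from the isomorphism $\Theta$ rather than from Marian--Zhao.

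It remains to compare dimensions. Since $q$ is finite, $\dim q(O_{\tilde y})=\dim O_{\tilde y}$, and $\pi$ is birational. This yields $\dim O_y\ge \dim O_{\tilde y}$ directly, because $O_y$ contains the strict transform of $q(O_{\tilde y})$. The reverse inequality is the main obstacle, since $\pi^{-1}$ of a constant cycle subvariety meeting the exceptional locus could a priori gain dimension from positive-dimensional fibers. I would handle it as in fact (i) of \Cref{thm:SYZ-sixfold}. By Voisin's result (\cite[Lemma 20]{Voisin15}, valid for any projective hyperk\"ahler manifold), the maximal-dimensional components of any orbit $O_y$ deform so as to sweep out a Zariski-dense set of $Y_K$. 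Hence one may pick a point $y'$ with $\chow{y'}=\chow{y}$ and $O_y$-component of maximal dimension through $y'$ that is not contained in the exceptional locus $\mathrm{Exc}(\pi)$. Its image under $\pi$ lifts under $q$ to a constant cycle subvariety of the same dimension through a point rationally equivalent to $\tilde y$. Therefore $\dim O_{\tilde y}\ge \dim O_y$.

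The same density argument on $K$ (using Voisin's lemma there) handles the symmetric direction without exceptional-locus issues. Combining the two inequalities gives $\dim O_y=\dim O_{\tilde y}$, so $\Theta(\bS_i\CH_0(Y_K))=\bS_i\CH_0(K)$ for all $i$, which is the assertion.
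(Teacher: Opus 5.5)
Your proposal is correct and follows essentially the paper's proof. It uses the same isomorphism $q^*\circ\pi_*$, which rests on the triviality of the $G_0$-action, and the same orbit-dimension comparison. In both, the only nontrivial direction uses density of the maximal-dimensional components of the orbit $O_y$ itself to pick one not contained in the exceptional locus of $\pi$; note that the density must hold within that single orbit, since deformations that leave the orbit would not help. Your additions are harmless refinements of the same argument: injectivity of $\pi_*$ via rational chain connectedness of the fibres, and the identity $O_y=\pi^{-1}\bigl(q(O_{\tilde y})\bigr)$, which makes the direction from $K$ to $Y_K$ need no genericity because $\pi^{-1}(q(W))$ already has dimension $\ge\dim W$.
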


\begin{proof}
By assumption, $G_0$ acts trivially on $\CH_0(K)$; hence $\CH_0(K)^{G_0} \cong \CH_0(K)$.  
Using the identification $\CH_0(Y_K) \cong \CH_0(K/G_0) \cong \CH_0(K)^{G_0}$, we obtain an isomorphism
\[
\chi : \CH_0(Y_K) \longrightarrow \CH_0(K).
\]

We now show that $\chi$ respects Voisin's filtration, i.e. $\chi\bigl(\mathbf{S}_i\CH_0(Y_K)\bigr) = \mathbf{S}_i\CH_0(K)$ for all $i$.
Let $q : K \to K/G_0$ be the quotient map and $\pi : Y_K \to K/G_0$ be the crepant resolution.  
Then $\chi$ can be written as the composition
\[
\CH_0(Y_K) \xlongrightarrow{\pi_*} \CH_0(K/G_0) \xlongrightarrow{q^*} \CH_0(K)^{G_0} \hookrightarrow \CH_0(K).
\]
\begin{description}[font=\it]
    \item[$\bullet$ $\chi$ preserves the filtration] 
    Take any $z \in Y_K$ with $\dim O_z \geq 3-i$. 
    Choose an irreducible component $Z \subseteq O_z$ of maximal dimension. By \cite[Corollary 5.4]{CLZZ:2024}, such components are Zariski dense in $O_z$; hence we may assume $Z$ is not contained in the exceptional locus of $\pi$.  
    Consider $q^{-1}(\pi(Z)) \subseteq K$. Since $\pi$ is birational and $Z$ is constant-cycle, $\pi(Z)$ is also constant-cycle in $K/G_0$; $q$ being finite also preserves the constant-cycle property. Moreover, $\dim q^{-1}(\pi(Z)) = \dim Z \ge 3-i$.  
    The class $\chi(\chow{z})$ is therefore represented by a cycle supported on $q^{-1}(\pi(Z))$ (it is the average of the lifts of $\pi(Z)$ under the $G_0$-action). Consequently, $\chi(\chow{z}) \in \mathbf{S}_i \CH_0(K)$.

    \item[$\bullet$ $\chi^{-1}$ preserves the filtration]
    Conversely, take any point $x \in K$ with $\dim O_x \ge 3-i$.  
     Let $W \subseteq O_x$ be an irreducible component of maximal dimension.  As $q$ is finite and surjective, $q(W)$ is an irreducible closed subset of $K/G_0$ of the same dimension.  Indeed, the rational equivalence orbit $O_{q(x)}$ is exactly the image of $O_x$ via $q$.  Note that the maximal irreducible components of $O_{q(x)}$ are also Zariski dense; we can choose $W$ generic so that $q(W)$ is not contained in the singular locus of $K/G_0$.  
    The strict transform of $q(W)$ in $Y_K$ under $\pi^{-1}$ has dimension $\dim W \ge 3-i$ and consists of constant-cycle points.  
    By construction, $\chi^{-1}(\chow{x})$ is represented by a point on this strict transform, hence $\chi^{-1}(\chow{x}) \in \mathbf{S}_i\CH_0(Y_K)$. \qedhere
\end{description}
\end{proof}

\begin{corollary} \label{cor:twisted-modular-sixfold}
For a twisted modular sixfold $K = K_\sigma(X,\alpha,\mathbf{v})$, the hypotheses of \Cref{thm:rational-orbit-sixfolds} are satisfied. Consequently,
\[
\mathbf{S}_\bullet \CH_0(K) \cong \mathbf{S}_\bullet \CH_0(Y_K).
\]
\end{corollary}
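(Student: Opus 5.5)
The plan is to show that every element of $G_0$ acts as the identity on $\CH_0(K)$ when $K=K_\sigma(X,\alpha,\mathbf{v})$ is twisted modular. The conclusion then follows directly from \Cref{thm:rational-orbit-sixfolds}. Since $G_0\subset \Aut_0(K)$, it suffices to treat numerically trivial automorphisms. By the explicit description in \Cref{subsection:monodromy-and-numerical-trivial}, every element $g\in\Aut_0(K)$ is induced by a derived autoequivalence $\Psi_g$ of $\Db(X,\alpha)$. Concretely, $\Psi_g$ is a composition of functors $\Phi_\gamma$ with $\gamma\in A_{X,\alpha}$, which realize translations by $X[n+1]$ twisted by suitable line bundles, and possibly the pullback $\bL\iota^*$. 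By Claim 1 in the proof of \Cref{lem:adjust-albanese-fiber}, the elements $\Phi_\gamma$ fix the stability condition. The same holds for $\bL\iota^*$ once $\sigma$ is chosen $\iota$-invariant, or after composing with the wall-crossing of \Cref{thm:wall-crossing-to-equivalence}. Hence $g(E)=\Psi_g(E)$ for $E$ in a dense open subset of $K$.

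Next, each such $g$ is symplectic, because it acts trivially on $\rH^2(K,\ZZ)$ and in particular fixes the symplectic form. Two arguments then give trivial action on $\CH_0(K)$. The first is to apply \Cref{thm:main2} directly, since $g$ is a symplectic (birational) automorphism of a twisted modular $\mathrm{Kum}_3$-type variety. The second, more hands-on, argument uses the fact that $\Psi_g$ is either some $\Phi_\gamma$ or $\Phi_\gamma\circ\bL\iota^*$. In the first case, \Cref{prop:albanese-kernel-action} shows that $\Phi_\gamma$ is the identity on $\CH_0(X)_{\alb}$. In the second case, $\iota^*$ acts as $+1$ on $\CH_0(X)_{(2)}$, because $\CH_0(X)_{(s)}$ is the $(-1)^s$-eigenspace of $\iota_*$. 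Thus $c_2(g(E))_{(2)}$ and $c_2(E)_{(2)}$ agree for general $E$.

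The degree-$0$ part is fixed for trivial reasons. The $(1)$-part is controlled by the Albanese class, and this class is constant on the Albanese fibre $K$. Hence $c_2(g(E))=c_2(E)$ in $\CH_0(X)$ for general $E$. \Cref{thm:Marian-Zhao} then gives $\chow{g(E)}=\chow{E}$ in $\CH_0(K)$. Because $\CH_0(K)$ is spanned by classes of general points (any point is rationally equivalent to a combination of general points by a moving argument), $g_*=\id$ on $\CH_0(K)$.

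The main obstacle is the constancy of the $(1)$-component of $c_2$ along $K$. Concretely, one must show that the Albanese map of $M_\sigma(X,\alpha,\mathbf{v})$ records $c_2(E)_{(1)}$ (through $\mathfrak{alb}(Nc_2)$, as in the map $\Sigma$ in the proof of \Cref{lem:adjust-albanese-fiber}). One must also check that $\Psi_g$ genuinely preserves $K$ rather than a translate of it. I would handle this by invoking \Cref{lem:adjust-albanese-fiber}: it lets us replace $\Psi_g$ by an adjusted functor that preserves the fibre while still inducing $g$. Since the adjustment is again by some $\Phi_{\gamma}$, it does not change the action on $\CH_0(X)_{\alb}$. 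Having verified that $G_0$ acts trivially on $\CH_0(K)$, \Cref{thm:rational-orbit-sixfolds} yields the isomorphism $\mathbf{S}_\bullet\CH_0(K)\cong\mathbf{S}_\bullet\CH_0(Y_K)$.
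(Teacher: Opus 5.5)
Your first route is a complete proof, and it differs from the paper's. Every $g\in G_0\subset\Aut_0(K)$ acts trivially on $\rH^2(K,\ZZ)$, hence is symplectic. So \Cref{thm:main2}, which is proved earlier and does not depend on this corollary, gives $g_*=\id$ on $\CH_0(K)$ at once. The paper argues directly instead. It observes that the explicit generators of $\Aut_0(K)$ from \Cref{subsection:monodromy-and-numerical-trivial} preserve $c_2(E)$ for $E\in K$, and then uses only \Cref{thm:Marian-Zhao}. The paper's route avoids the Torelli, wall-crossing and Kummer-descent machinery behind \Cref{thm:main2}. Yours is a one-liner once that theorem is available.

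Your second, hands-on route is the paper's argument, but the step giving $c_2(g(E))_{(2)}=c_2(E)_{(2)}$ is mis-justified. \Cref{prop:albanese-kernel-action} only says that $\Phi_\gamma^{\CH}$ fixes cycles lying in $\CH_0(X)_{\alb}$. The class $c_2(E)$ has nonzero degree and a $(1)$-part. Moreover, $\ch(\Phi_\gamma(E))=\Phi_\gamma^{\CH}(\ch(E))$ also mixes $\ch_0(E)$ and $\ch_1(E)$ into the zero-cycle part. For example, translation by a general non-torsion $u$ sends $\chow{0}$, whose $(2)$-part is zero, to $\chow{u}$, whose $(2)$-part $\tfrac12(\chow{u}+\chow{-u})-\chow{0}$ is nonzero.

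What actually makes the argument work is that the $\gamma$ which occur are torsion: translations by $X[n+1]$ with a torsion adjusting line bundle. For torsion $a$ one has $\chow{a}=\chow{0}$ with rational coefficients, because $\mathbf{m}_*$ kills $\chow{a}-\chow{0}$ but acts invertibly on the $(1)$- and $(2)$-parts. Torsion line bundles have $c_1=0$ rationally. Hence such $\Phi_\gamma$ act trivially on all of $\CH^*(X)$, and $c_2(g(E))=c_2(E)$ holds outright. Only for $\bL\iota^*$ do you need your observation that the $(1)$-part, which $\iota^*$ negates, is constant on $K$. This is exactly what the paper's one-line ``they act trivially on $c_2(E)$'' encodes.

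Two of your hedges are also unnecessary. First, $\iota^*$ acts trivially on even cohomology, so the argument of Claim~1 in \Cref{lem:adjust-albanese-fiber} already shows that $\bL\iota^*$ fixes $\sigma$. Second, $\Psi_g$ preserves $K$ automatically, since it realizes the automorphism $g$ of $K$.
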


\begin{proof}
The generators of $G_0$ are described in \Cref{subsection:monodromy-and-numerical-trivial}. They act trivially on $c_2(E)$ for every $E \in K$, hence by \Cref{thm:Marian-Zhao} they act trivially on $\CH_0(K)$. Thus $G_0$ acts trivially on $\CH_0(K)$, and \Cref{thm:rational-orbit-sixfolds} applies. 
\end{proof}

\begin{theorem} \label{thm:modular-sixfold}
   If $K = K_\sigma(X,\alpha,\mathbf{v})$ is a twisted modular sixfold, then \Cref{conj:anti-symplectic} holds for anti-symplectic birational automorphisms.
\end{theorem}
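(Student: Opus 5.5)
The plan is to transport the problem from $K$ to the $\mathrm{K3}^{[3]}$-type variety $Y_K$ via \Cref{cor:twisted-modular-sixfold}, and then to run the argument of \Cref{prop:SYZ-give-anti} on $Y_K$. There the required ``SYZ $=$ Voisin'' input is already known, by \cite{Voisin15} and \cite{LZ23, CLZZ:2024}.

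\textbf{Descending $f$.} Let $f\in\Bir(K)$ be anti-symplectic. Since $G_0$ is normal in $\Bir(K)$, $f$ descends to a birational automorphism $\bar f$ of $K/G_0$, and hence to $f_Y\in\Bir(Y_K)$. Because $\pi$ is crepant and $q$ is étale in codimension one (the fixed loci have codimension two), the symplectic form of $Y_K$ pulls back to a multiple of that of $K$. Hence $f_Y$ is again anti-symplectic.

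\textbf{Compatibility with $\chi$.} The isomorphism $\chi:\CH_0(Y_K)\to\CH_0(K)$ intertwines $(f_Y)_*$ with $f_*$. Indeed, on a dense open set of points, $q^*\pi_*$ sends a point to the $G_0$-average of its lifts, and $f$ permutes $G_0$-orbits compatibly with $\bar f$. Since $G_0$ acts trivially on $\CH_0(K)$, this average is just the class of any single lift. Zero-cycles supported on a dense open subset generate $\CH_0$, so the intertwining holds on all of $\CH_0$. By \Cref{cor:twisted-modular-sixfold}, $\chi$ matches the filtrations $\mathbf S_\bullet$. It therefore suffices to prove \Cref{conj:anti-symplectic} for $f_Y$ on $Y_K$.

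\textbf{Reduction to the K3 Bloch statement.} Since $Y_K$ is of $\mathrm{K3}^{[3]}$-type and birational to a moduli space on the K3 surface $S_K$, I would apply the $\mathrm{K3}^{[n]}$-type anti-symplectic result of \cite{LiYuZhang}, which states that the anti-symplectic case reduces to the symplectic one. The hypotheses of \cite{LiYuZhang} are Picard number at least $4$ and an untwisted surface, and they must be checked here. Since $K$ is twisted modular, I expect $S_K$ to be closely related to $\km(X)$, in line with the remark comparing with $\rho$, so its Picard number is at least $17$. \Cref{lem:untwist-k3} then lets us replace $S_K$ by an untwisted K3 surface.

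If instead one wants to bypass that identification, I would argue directly. Lift $f$ to a derived (anti-)autoequivalence by \Cref{thm:birational-to-equivalence}. By \Cref{rmk:symplectic-type} and \Cref{thm:main}, it acts by $-\id$ on $\CH_0(X)_{\alb}$. Transfer this through the Kummer descent of \Cref{prop:descend-equivalence} to an action by $-\id$ on the Albanese kernel of the Kummer-type K3. Then apply \cite[Theorem 4.3(iii)]{LiYuZhang} on $Y_K$ exactly as in the proof of \Cref{prop:SYZ-give-anti}, with the splitting supplied by SYZ $=$ Voisin for $\mathrm{K3}^{[3]}$-type moduli spaces. This yields $(f_Y)_*=(-1)^i$ on $\mathrm{gr}_i^{\mathbf S}$, and transporting back by $\chi$ proves the theorem.

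\textbf{Main obstacle.} The hardest step is identifying the K3 surface $S_K$ and the correspondence $Y_K\dashrightarrow M(S_K)$ compatibly with $f_Y$. Concretely, one needs that the surface-level action of $f_Y$ on $\CH_0(S_K)_{\alb}$ is $-\id$. I would first try to show that $S_K$ is derived equivalent to the twisted Kummer surface $(\km(X),\bar\alpha)$. The approach is to compare transcendental lattices, using that $\rT(Y_K)$ is $\rT(K)$ scaled and $\rT(K)\cong\rT(X,\alpha)$. This would let the anti-symplectic sign coming from \Cref{thm:main} propagate to $S_K$.
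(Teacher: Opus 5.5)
Your first route is the paper's proof: descend $f$ to an anti-symplectic $f_Y\in\Bir(Y_K)$, apply the $\mathrm{K3}^{[n]}$-type anti-symplectic result of \cite{LiYuZhang} on $Y_K$, and transport back along $\chi$ using \Cref{cor:twisted-modular-sixfold}. Your checks that $f_Y$ is anti-symplectic and that $\chi$ intertwines $(f_Y)_*$ with $f_*$ spell out points the paper leaves implicit.

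The step you leave as an expectation, the hypotheses of \cite{LiYuZhang}, needs no comparison of $S_K$ with $\km(X)$. Since $G_0\subset\Aut_0(K)$ acts trivially on $\rH^2(K,\QQ)$, the space $\rH^2(K/G_0,\QQ)\cong\rH^2(K,\QQ)$ is $7$-dimensional, while $b_2(Y_K)=23$. So the crepant resolution contributes $16$ exceptional divisor classes, and $\rho(Y_K)\ge 16+\rho(K)$; this is the Picard-number bound the paper invokes. Hence $\rho(S_K)=\rho(Y_K)-1\ge 16$. Untwistedness is part of Floccari's statement, or follows from \Cref{lem:untwist-k3} as you say.

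Consequently your \emph{main obstacle} is not one. Inside Li--Yu--Zhang's result, the fact that the lifted anti-symplectic (anti-)autoequivalence acts by $-\id$ on $\CH_0(S_K)_{\alb}$ comes from their own K3 theorem for Picard number at least $3$. So there is no need to propagate the sign from \Cref{thm:main} or to identify $S_K$ with $(\km(X),\bar\alpha)$. Your alternative bypass route genuinely depends on that unproved identification, and it can simply be dropped.
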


\begin{proof}
Let $f \in \Bir(K)$ be anti-symplectic. Since $G_0$ is a normal subgroup of $\Bir(K)$ and $f$ normalizes $G_0$, $f$ descends uniquely to $\overline{f} \in \Bir(K/G_0)$, which lifts to $\widetilde{f} \in \Bir(Y_K)$ via the crepant resolution.

Now $Y_K$ is a hyperkähler manifold of $\mathrm{K3}^{[3]}$-type with Picard number $\ge 16$, and $\widetilde{f}$ is anti-symplectic. By the main result of \cite{LiYuZhang}, $\widetilde{f}$ acts on $\CH_0(Y_K)$ as $(-1)^i \operatorname{id}$ on each $\mathbf{S}_i\CH_0(Y_K)$.

From \Cref{thm:rational-orbit-sixfolds}, the rational map $K\dashrightarrow Y_K$ induces an isomorphism $\CH_0(Y_K) \cong \CH_0(K)$ which identifies Voisin's filtrations. Hence $f$ acts on $\CH_0(K)$ as $(-1)^i \operatorname{id}$ on each $\mathbf{S}_i\CH_0(K)$. This completes the proof.
\end{proof}

\printbibliography
\end{document}